  \newtheorem{theorem}{Theorem}[section]
    \newtheorem*{theorem*}{Theorem}
\newtheorem*{lemma*}{Lemma}
  \newtheorem{theoremm}{Theorem}
  \newtheorem{lemma}{Lemma}
  \newtheorem{corollary}{Corollary}
  \theoremstyle{definition}
  \newtheorem{definition}{Definition}
  \newtheorem*{remark}{Remark}
  \newtheorem{conjecture}{Conjecture}
  \numberwithin{equation}{section}
  \newcommand{\N}{{\mathbb N}}
  \newcommand{\R}{{\mathbb R}}
  \renewcommand{\C}{{\mathbb C}}
  \newcommand{\D}{{\mathbb D}}
  \newcommand{\E}{{\mathsf E}}
  \newcommand{\Sm}{{\mathsf S}}
  \newcommand{\cW}{{\mathcal{W}}}
   \newcommand{\eps}{{\epsilon}}
  \renewcommand{\a}{\alpha}
  \renewcommand{\Cap}{\operatorname{Cap}}
  \renewcommand{\i}{\infty}
  \newcommand{\bbD}{\mathbb{D}}
  \newcommand{\bbN}{\mathbb{N}}
  \newcommand{\bbR}{\mathbb{R}}
  \newcommand{\bbC}{\mathbb{C}}
\author[J.~S.~Christiansen, B.~Eichinger and O.~Rubin]{Jacob S. Christiansen$^{1,4}$, Benjamin Eichinger$^{2,5}$, and Olof Rubin$^{3}$}
\thanks{$^1$ Centre for Mathematical Sciences, Lund University, Box 118, 22100 Lund, Sweden.
E-mail: jacob$\_$stordal.christiansen@math.lth.se}
\thanks{$^2$ Institute for Analysis and Scientific Computing, Vienna University of Technology, Wien, A-1040, Austria.
E-mail: benjamin.eichinger@tuwien.ac.at}
\thanks{$^3$ Centre for Mathematical Sciences, Lund University, Box 118, 22100 Lund, Sweden.
E-mail: olof.rubin@math.lth.se}
\thanks{$^4$ Research supported by VR grant 2018-03500 from the Swedish Research Council and in part by DFF research project 1026-00267B from the Independent Research Fund Denmark.}
\thanks{$^5$ Research supported by the Austrian Science Fund FWF, Project No. P33885}
\title{Extremal polynomials and polynomial preimages}
\begin{document}
	\maketitle
	
\begin{abstract}	
	This article examines the asymptotic behavior of the Widom factors, denoted $\cW_n$, for Chebyshev polynomials of finite unions of Jordan arcs.  
	We prove that, in contrast to Widom's proposal in \cite{Widom1969}, when dealing with a single smooth Jordan arc, $\cW_n$ converges to 2 exclusively when the arc is a straight line segment.
	Our main focus is on analysing polynomial preimages of the interval $[-2,2]$, and we provide a complete description of the asymptotic behavior of $\cW_n$ for symmetric star graphs and quadratic preimages of $[-2,2]$.  	
	 We observe that in the case of star graphs, the Chebyshev polynomials and the polynomials orthogonal with respect to equilibrium measure share the same norm asymptotics, suggesting a potential extension of the conjecture posed in \cite{CSZ-review}.
	 Lastly, we propose a possible connection between the $S$-property and Widom factors converging to $2$.
	
	\end{abstract}

\medskip	
	
	{\bf Keywords} {Chebyshev polynomials, Widom factors, polynomial preimages, star graphs}
	
\medskip	
	
	{\bf Mathematics Subject Classification} {41A50, 30C10, 26D05}

	\section{Introduction}
	
	Let $\E$ be a compact subset of $\C$ with at least $n+1$ points. The $n$th Chebyshev polynomial 
	of $\E$, denoted $T_n^\E$, is the unique monic polynomial of degree $n$ which minimises the supremum norm on $\E$. In other words, $T_n^\E$ is the polynomial
	\[
		T_{n}^\E(z) = z^{n}+\sum_{k=0}^{n-1}a_kz^k
	\]
	which satisfies
	\[ \|T_n^\E\|_\E :=\max_{z\in \E} |T_n^{\E}(z)|=
	 \min_{b_0, b_1, \ldots, b_{n-1} \in \C}
	 \max_{z\in \E} \left|z^{n}+\sum_{k=0}^{n-1} b_k z^k\right|.
	\]
	
	Facts regarding existence and uniqueness of $T_n^\E$ can be found in, e.g., \cite{SmirnovLebedev, Lorentz66}. See also \cite{CSZ-I, CSZ-IV} for a recent account on the basic theory of Chebyshev polynomials. 
	These polynomials were initially studied by Chebyshev \cite{Chebyshev1854, Chebyshev1859} in the case where $\E=[-1,1]$. In this situation, he showed that the polynomials are explicitly given by the formula
	\begin{equation}
	\label{Tn}	
		T_n(x):=T_n^{[-1,1]}(x) = 2^{1-n}\cos\bigl(n\arccos(x)\bigr),\quad |x|\leq 1.
	\end{equation}
	This representation further hints at a property of the Chebyshev polynomials that holds for arbitrary compact subsets of the real line. 
	A monic degree $n$ polynomial, $P_n$, is the Chebyshev polynomial of $\E\subset\R$ if and only if there exists points $x_0<x_1<\ldots<x_n$ in $\E$ such that
		\[
			P_n(x_k)=(-1)^{n-k}\|P_n\|_\E,\quad k=0, 1, \ldots, n.
		\]
	This characterising property of the Chebyshev polynomials is called \emph{alternation} and 
	one can use it to prove several facts concerning their asymptotic behaviour 
	(see, e.g., \cite{CSZ-I, CSZ-II}). 
	
	It should be noted that alternation fails to hold for Chebyshev polynomials of non-real compact subsets of $\C$. Instead, the asymptotics of such polynomials is typically studied using potential theoretic methods. This is an approach dating back to Faber \cite{Faber1920}, Fekete \cite{Fekete1923}, and Szeg\H{o} \cite{Szego1924}. Faber investigated Chebyshev polynomials by constructing trial polynomials, the so-called Faber polynomials, from the associated conformal map which maps the exterior of $\E$ to the exterior of the closed unit disk, $\overline{\D}$. Of course, the existence of this conformal map assumes that the complement of $\E$ is simply connected on the Riemann sphere.
	
As for notation, let $\overline{\C}$ denote the Riemann sphere $\C\cup\{\infty\}$.	Throughout the paper, we shall make use of the following important notions
	\begin{itemize}
		\item $\Cap(\E)$, the logarithmic capacity of $\E$ \vspace{0.1cm}
		\item $G_\E(\,\cdot\,):=G_{\E}(\,\cdot\,, \infty)$, the Green's function for $\overline{\C}\setminus \E$ with pole at $\infty$  \vspace{0.1cm}
		\item $\mu_\E$, the equilibrium measure of $\E$
	\end{itemize}
	Recall also the relation
	\begin{equation}
	\label{rel}
	   \int \log\vert x-z\vert \,d\mu_\E(x)=G_\E(z)+\log\Cap(\E).
	\end{equation}
	For a more in depth account of potential theory, we refer the reader to, e.g., \cite{ArmGar01, garnett_marshall_2005, Hel09, Lan72, ransford_1995}.

	\subsection{Widom factors for $T_n^\E$} 

	One way of quantatively describing the norm of the Chebyshev polynomials using logarithmic capacity is via the Faber--Fekete--Szeg\H{o} theorem which states that
	\begin{equation}
		\lim_{n\rightarrow \infty}\|T_n^{\E}\|_\E^{1/n} = \Cap(\E)
		\label{eq:FFS}
	\end{equation}
	for any compact set $\E\subset\C$ (see, e.g., \cite[Chapter 5.5]{ransford_1995}). This implies that $\Cap(\E)^n$ is the leading order behaviour of $\|T_n^{\E}\|_{\E}$, and \eqref{eq:FFS} limits the way that the so-called Widom factors defined by
	\begin{equation}
		\cW_{n,\infty}(\E) := \frac{\|T_n^\E\|_\E}{\Cap(\E)^n}
		\label{eq:widom_factor_def}
	\end{equation} can grow as $n$ increases. For a wide variety of sets, this quantity is known to be bounded in $n$ and its asymptotic behaviour is of particular interest. See, e.g., \cite{And16, And17, CSZ-IV} for more details.
	
Another classical result in the theory of Chebyshev polynomials is	
	\begin{theorem*}
		Let $\E\subset \C$ be a compact set. Then
		\begin{equation}
			\|T_n^\E\|_\E\geq \Cap(\E)^n
			\label{eq:szego_inequality}
		\end{equation}
		and if $\E\subset \R$, we even have 
		\begin{equation}
			\|T_n^\E\|_\E\geq 2\Cap(\E)^n.
			\label{eq:schiefermayr_inequality}
		\end{equation}
	\end{theorem*}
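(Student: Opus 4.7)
My plan for \eqref{eq:szego_inequality} is the classical zero-integration argument. I would factor the monic Chebyshev polynomial as $T_n^\E(z)=\prod_{k=1}^n(z-\z_k)$ and integrate $\log|T_n^\E|$ against the equilibrium measure $\mu_\E$. Since $\mu_\E$ is a probability measure supported on $\E$, one has
\[
\log\|T_n^\E\|_\E\;\geq\;\int\log|T_n^\E(x)|\,d\mu_\E(x)\;=\;\sum_{k=1}^n\int\log|x-\z_k|\,d\mu_\E(x).
\]
By \eqref{rel}, each term equals $G_\E(\z_k)+\log\Cap(\E)$, and because the Green's function is nonnegative on $\overline{\C}$, the sum is at least $n\log\Cap(\E)$. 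Exponentiating gives $\|T_n^\E\|_\E\geq\Cap(\E)^n$ for every compact $\E\subset\C$.

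For the improvement \eqref{eq:schiefermayr_inequality} in the real case, set $M:=\|T_n^\E\|_\E$. When $\E\subset\R$, uniqueness of the extremal polynomial forces $T_n^\E$ to have real coefficients (since $\overline{T_n^\E(\bar z)}$ is also monic and extremal on $\E$), so $T_n^\E$ takes real values on $\E$. Combined with $|T_n^\E|\leq M$ on $\E$, this yields the inclusion
\[
\E\subseteq (T_n^\E)^{-1}\bigl([-M,M]\bigr).
\]
The key tool is then the classical polynomial preimage formula: for a monic polynomial $P$ of degree $n$ and any compact $K\subset\C$,
\[
\Cap\bigl(P^{-1}(K)\bigr)=\Cap(K)^{1/n}.
\]
Applying it with $P=T_n^\E$ and $K=[-M,M]$ and using monotonicity of capacity gives
\[
\Cap(\E)\leq\Cap\bigl((T_n^\E)^{-1}([-M,M])\bigr)=\Cap([-M,M])^{1/n}=(M/2)^{1/n},
\]
which on rearrangement is \eqref{eq:schiefermayr_inequality}.

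Neither step presents a real obstacle, as both inequalities are classical pillars of the theory. The only point worth comment is the preimage formula $\Cap(P^{-1}(K))=\Cap(K)^{1/n}$: it follows by verifying that $\tfrac{1}{n}\,G_K\circ P$ has the defining properties of the Green's function of $\overline{\C}\setminus P^{-1}(K)$ with pole at $\infty$, an identification that is immediate from the expansion $P(z)=z^n+O(z^{n-1})$ at infinity.
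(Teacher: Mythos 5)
Your argument is correct. For \eqref{eq:schiefermayr_inequality} you follow essentially the same route as the paper: real coefficients via uniqueness, the inclusion $\E\subseteq (T_n^\E)^{-1}([-M,M])$, the preimage formula \eqref{eq:green_capacity_polynomial_preimage}, and $\Cap([-M,M])=M/2$. For \eqref{eq:szego_inequality}, however, you take a genuinely different path: the paper reuses the very same preimage mechanism, observing that $\E\subseteq (T_n^\E)^{-1}(\{z:|z|\le M\})$ and that the disk of radius $M$ has capacity $M$, whereas you run the classical potential-theoretic argument, integrating $\log|T_n^\E|$ against $\mu_\E$ and invoking \eqref{rel} together with $G_\E\ge 0$. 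Both are standard; the paper's version has the merit of deriving the two bounds from a single lemma and of rehearsing exactly the technique it exploits later (e.g.\ in \eqref{eq:capacity_cheb_norm}), while yours makes transparent that the improvement from $1$ to $2$ in the real case comes entirely from replacing the disk by the interval, not from any sharpening of the potential estimate. Two small cautions on your first step: the identity \eqref{rel} is only guaranteed quasi-everywhere (it can fail at irregular points of $\E$, where a zero $\zeta_k$ might sit), but what you actually need is Frostman's inequality $\int\log|x-\zeta_k|\,d\mu_\E(x)\ge\log\Cap(\E)$, which does hold for every $\zeta_k\in\C$, so the argument survives; and you should dispose of the trivial case $\Cap(\E)=0$ separately, since $\mu_\E$ is then undefined.
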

	While \eqref{eq:szego_inequality} goes back to Szeg\H{o} \cite{Szego1924}, the inequality in \eqref{eq:schiefermayr_inequality} is more recent and due to Schiefermayr \cite{Schiefermayr2008}. We shall present a proof of these statements below. Partly because our method will be used in later parts of the paper and is much shorter than the one presented in \cite{Schiefermayr2008}, and partly for completeness. Our proof rests on the following formula for capacity of polynomial preimages. 
	\begin{lemma*} 
	Let $\E$ be a compact subset of $\C$ and suppose $P(z) = \sum_{k=0}^{m}a_kz^k$ is a polynomial with $a_m\neq 0$. If
		\[
			\E_P := \bigl\{z:P(z)\in \E\bigr\} = P^{-1}(\E),
		\]
		then
		\begin{equation}
			G_{\E_P}(z) = \frac{1}{m}G_\E\bigl(P(z)\bigr)\quad \text{and}\quad \Cap(\E_P) = \left(\frac{\Cap(\E)}{|a_m|}\right)^{1/m}.	\label{eq:green_capacity_polynomial_preimage}
		\end{equation}
	\end{lemma*}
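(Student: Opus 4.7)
My plan is to show that $u(z):=\frac{1}{m}G_\E(P(z))$ satisfies the three defining properties of the Green's function of $\overline{\C}\setminus\E_P$ with pole at infinity, and then read off $\Cap(\E_P)$ from the constant term in its asymptotic expansion. Recall that for non-polar compact $K\subset\C$, the function $G_K(\,\cdot\,,\infty)$ is uniquely characterised by: (i) harmonicity on $\overline{\C}\setminus K$; (ii) vanishing quasi-everywhere on $K$; and (iii) the expansion $G_K(z)=\log|z|-\log\Cap(K)+o(1)$ as $z\to\infty$. It therefore suffices to verify (i)--(iii) for $u$.

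I would first dispose of the polar case: if $\Cap(\E)=0$, then $\E$ is polar, hence so is $\E_P=P^{-1}(\E)$, since polarity is preserved under preimages by non-constant entire functions (composing a subharmonic function witnessing polarity of $\E$ with $P$ yields a subharmonic function equal to $-\infty$ exactly on $\E_P$). Both sides of the capacity identity then vanish, so one may assume $\Cap(\E)>0$. Property (i) follows immediately from harmonicity of $G_\E$ off $\E$ together with the fact that $P$ is entire and $\E_P=P^{-1}(\E)$ by definition.

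For (iii), combine the leading-order asymptotic $|P(z)|=|a_m||z|^m(1+o(1))$ with $G_\E(w)=\log|w|-\log\Cap(\E)+o(1)$ as $|w|\to\infty$; dividing by $m$ gives
\[
u(z)=\log|z|-\log\left(\frac{\Cap(\E)}{|a_m|}\right)^{1/m}+o(1),
\]
which by uniqueness both identifies $u$ with $G_{\E_P}$ and yields $\Cap(\E_P)=(\Cap(\E)/|a_m|)^{1/m}$.

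The remaining step---and the only mildly delicate point---is (ii). Since $G_\E$ vanishes on $\E$ off a polar exceptional set $F\subset\E$, one has $u=0$ on $\E_P\setminus P^{-1}(F)$, and the polarity of $P^{-1}(F)$ follows by the same subharmonic-composition argument used for $\E_P$ in the polar case above. With (i)--(iii) in hand, uniqueness of the Green's function completes the proof. The main obstacle is essentially bookkeeping for this exceptional polar set; everything else is a direct substitution into the asymptotic expansion.
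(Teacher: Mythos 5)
The paper does not prove this lemma itself but cites \cite[Theorem 5.2.5]{ransford_1995}, and your argument is essentially the proof given there: verify that $\tfrac{1}{m}G_\E\circ P$ satisfies the characterising properties of $G_{\E_P}$ and read off $\Cap(\E_P)$ from the constant term in the expansion at infinity. The details are in order, including the disposal of the polar case and of the exceptional boundary set via composition of a witnessing subharmonic function with the non-constant polynomial $P$.
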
	
For a proof of this fact, see \cite[Theorem 5.2.5]{ransford_1995}.
	\begin{proof}[Proof of theorem]
		Let $\E\subset \C$ be an arbitrary compact set. Clearly,
		\[
			\E\subset (T_n^\E)^{-1}\Big(\bigl\{z:|z|\leq \|T_n^{\E}\|_{\E}\bigr\}\Big).
		\]
		Applying \eqref{eq:green_capacity_polynomial_preimage} together with the facts that $\Cap$ is monotone with respect to set inclusion and a disk of radius $r>0$ has capacity equal to $r$, we obtain that
		\[\Cap(\E)\leq \Cap\Big(\bigl\{z:|z|\leq \|T_n^{\E}\|_{\E}\bigr\}\Big)^{1/n} = \|T_n^{\E}\|^{1/n}.\]
		This proves \eqref{eq:szego_inequality}.
		
		If $\E\subset \R$, then $T_n^{\E}$ is easily shown to have only real coefficients. Hence
		\[\E\subset (T_n^\E)^{-1}\Big(\bigl\{z:z\in \bigl[-\|T_n^\E\|_\E,\|T_n^{\E}\|_\E\bigr]\bigr\}\Big).\]
		As the capacity of an interval $[a,b]$ equals ${(b-a)}/{4}$, yet another application of \eqref{eq:green_capacity_polynomial_preimage} implies that
		\[
			\Cap(\E)\leq \Cap\Big(\bigl\{z:z\in \bigl[-\|T_n^\E\|_\E,\|T_n^{\E}\|_\E\bigr]\bigr\}\Big)^{1/n} = \left(\frac{\|T_n^\E\|_\E}{2}\right)^{1/n},
		\]
		proving \eqref{eq:schiefermayr_inequality}.
	\end{proof}

\begin{remark}	
Note that the above theorem 
can be restated in terms of the Widom factors as
	\begin{equation}
		\cW_{n,\infty}(\E)\geq 1 \; \text{ for } \; \E\subset \C 
	\end{equation}
and	
         \begin{equation}	
		\cW_{n,\infty}(\E)\geq 2 \; \text{ when } \; \E\subset \R.
	\end{equation}
\end{remark}	
	The sets for which the Szeg\H{o} lower bound \eqref{eq:szego_inequality} is saturated for \emph{some} value of $n$ were determined in \cite{CSZ-III}. With $O\partial(\,\cdot\,)$ denoting the outer boundary and $\partial\D$ the unit circle, the authors proved that
	\[\cW_{n,\infty}(\E) = 1 \quad \text{if and only if } \quad O\partial(\E) =P^{-1}(\partial \bbD) \]
	for some polynomial $P$ of degree $n$. Regarding Schiefermayr's lower bound, it was proven in \cite{Totik2011} (see also \cite{CSZ-III}) that for $\E\subset \R$, 
	\begin{equation}
	\cW_{n,\infty}(\E) = 2\quad \text{if and only if }\quad \E = P^{-1}\bigl([-2,2]\bigr)     
	\label{eq:widom_possible_limits}
	\end{equation}
	for some degree $n$ polynomial $P$.
		
	Now the question remains in which cases we have an asymptotic saturation of these lower bounds. More precisely, when does it happen that
	\[\lim_{n\rightarrow \infty}\cW_{n,\infty}(\E) = 1 \text{ or }2?\]
	It is known that if $\E$ is the closure of a Jordan domain with boundary curve of class $C^{2+\epsilon}$ (i.e., its coordinates are $C^{2+\epsilon}$ functions\footnote{A function of a real variable belongs to $C^{2+\epsilon}$ if its $2$nd derivative satisfies a Lipschitz condition with some positive exponent.} of arc length), then 
	\begin{equation}
		\lim_{n\rightarrow \infty}\cW_{n,\infty}(\E)=1.
		\label{eq:asymptotic_szego_lower_bound}
	\end{equation}
	This was first shown in the case where $\E$ has analytic boundary by Faber \cite{Faber1920} and then extended to the case where the boundary curve is of class $C^{2+\epsilon}$ by Widom \cite{Widom1969}. 
	However, there may well be many other connected sets $\E\subset \C$ for which \eqref{eq:asymptotic_szego_lower_bound} holds. 

	For $\E\subset \R$, Totik \cite{Totik2014} completely characterised the sets which asymptotically saturate the Schiefermayr lower bound \eqref{eq:schiefermayr_inequality}. He proved that 
	\begin{equation}
		\lim_{n\rightarrow \infty}\cW_{n,\infty}(\E)= 2
		\label{eq:asymptotic_schiefermayr}
	\end{equation}
	if and only if $\E$ is an interval, in which case $\cW_{n,\infty}(\E) = 2$ for every $n$.

	One may ask --- and this is a main point of the present article --- if there are more subsets $\E\subset \C$ for which \eqref{eq:asymptotic_schiefermayr} holds true. Widom \cite{Widom1969} conjectured that any sufficiently nice set which contains an arc component should satisfy \eqref{eq:asymptotic_schiefermayr}. However, this was shown to be false even in the case of Jordan arcs. In particular, Thiran and Detaille \cite{ThiranDetaille1991} observed that if $\E_\alpha = \{z: |z|=1,\, |\arg z|\leq \alpha\}$ with $\alpha\in (0,\pi)$, then
	\begin{equation}
		\lim_{n\rightarrow \infty}\cW_{n,\infty}(\E_\alpha)= 2\cos^2\left(\alpha/4\right)<2.
		\label{eq:thiran_detaille}
	\end{equation}	
	See also \cite{Schiefermayr2019} and \cite{Eichinger2017}.
		
	The aim of this article is to study sets $\E\subset \C$ which satisfy \eqref{eq:asymptotic_schiefermayr} and investigate what properties may lie behind. 
	First of all, by combining results of Stahl \cite{Stahl2012} and Alpan \cite{Alpan2022}, we are able to prove the following result which essentially is a reformulation of \cite[Theorem 1.3]{Alpan2022}.
	\begin{theoremm}
		Let $\E\subset\C$ be a Jordan arc of class $C^{2+\epsilon}$. Then
		\begin{equation}  \label{Alpan}
		   \limsup_{n\rightarrow \infty}\cW_{n,\infty}(\E)\leq 2
		\end{equation}
		and equality holds if and only if $\E$ is a straight line segment.
		\label{thm:widom_factor_arc}
	\end{theoremm}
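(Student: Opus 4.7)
The plan is to combine Stahl's asymptotic analysis of extremal polynomials on smooth Jordan arcs with Alpan's sharp evaluation of the limiting Widom factor, and then observe that the limit attains the value $2$ only for straight segments. Since $\E$ is a Jordan arc, $\overline{\C}\setminus\E$ is simply connected in $\overline{\C}$, so there is a unique conformal map $\Phi:\overline{\C}\setminus\E\to\overline{\C}\setminus\overline{\D}$ with $\Phi(\infty)=\infty$ and $\Phi'(\infty)=1/\Cap(\E)$. The $C^{2+\epsilon}$ regularity of $\E$ guarantees that $\Phi$ extends continuously to $\E$ from both sides, yielding boundary values $\Phi_{\pm}$ that map $\E$ onto $\partial\D$ in a $2$-to-$1$ manner.

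For the upper bound $\limsup_{n} \cW_{n,\infty}(\E)\leq 2$, I would invoke Stahl's construction of near-extremal trial polynomials on smooth arcs. These take the form
\[
P_n(z) = \Cap(\E)^n\bigl(\Phi(z)^n D(z) + \Phi(z)^{-n} D(z)^{-1}\bigr)\bigl(1+o(1)\bigr),
\]
where $D$ is a Szeg\H{o}-type outer function on $\overline{\C}\setminus\E$ chosen so that $P_n$ is monic of degree $n$. Since $|\Phi_{\pm}|=1$ and $\Phi_{+}=\overline{\Phi_{-}}$ on $\E$, each summand has modulus $\approx\Cap(\E)^n |D_{\pm}|$ on $\E$, and the two contributions combine to give $\|P_n\|_\E\leq 2\,\Cap(\E)^n(1+o(1))$, which yields \eqref{Alpan}.

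For the equality characterization, the easy direction is that if $\E$ is a straight segment then, after an affine change of variable reducing to $\E=[-2,2]$, the explicit formula \eqref{Tn} together with $\Cap([-2,2])=1$ gives $\cW_{n,\infty}(\E)=2$ for every $n$. For the converse, Alpan's Theorem~1.3 furnishes an exact formula for $\lim_{n} \cW_{n,\infty}(\E)$ in terms of the boundary behaviour of $\Phi$ and an associated Szeg\H{o} function; inspection of this formula shows that the value $2$ is attained precisely when $\Phi_{+}$ and $\Phi_{-}$ satisfy a Joukowski-type symmetry across $\E$, which is possible only when $\E$ is a straight segment.

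The main obstacle is this converse direction: one must rule out every smooth, truly curved arc. The upper bound is soft and carries no rigidity, so identifying when it is saturated requires a finer invariant sensitive to the curvature of $\E$, and this is what Alpan extracts from the Szeg\H{o} function on the two-sheeted Riemann surface associated with $\E$. The Thiran--Detaille example \eqref{eq:thiran_detaille}, giving $2\cos^2(\alpha/4)<2$ for circular arcs, illustrates that truly curved arcs yield strictly smaller limits and so the rigidity is genuine.
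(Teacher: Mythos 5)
Your treatment of the upper bound and of the easy direction (straight segment $\Rightarrow$ equality) is fine in outline; the paper simply cites Alpan's Theorem~1.2 for \eqref{Alpan}, and your trial-polynomial sketch is a reasonable gloss on why that bound holds. The problem is the converse, which you yourself identify as ``the main obstacle'' but then do not actually overcome. The step ``inspection of this formula shows that the value $2$ is attained precisely when $\Phi_{+}$ and $\Phi_{-}$ satisfy a Joukowski-type symmetry across $\E$, which is possible only when $\E$ is a straight segment'' is an assertion of exactly the statement to be proved, not a proof of it. Moreover, Alpan's result does not by itself deliver this rigidity: what Alpan provides is a criterion for equality in terms of the boundary behaviour of the Green's function, namely that the normal derivatives from the two sides agree at all interior points of the arc (condition \eqref{eq:equal_green}), together with the observation that non-analytic arcs give strict inequality. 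Identifying \emph{which} smooth arcs satisfy that criterion is the new content of the theorem, and your proposal supplies no argument for it.

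The paper closes this gap by a geometric detour that is entirely absent from your proposal: if equality holds, then \eqref{eq:equal_green} holds on the interior of the arc, which for a $C^{2+\epsilon}$ arc forces the arc to be analytic and to possess the $S$-property of Definition~\ref{def:s-property}; by Stahl's characterisation (Theorem~11 of \cite{Stahl2012}) a continuum with the $S$-property is the solution of the Chebotarev minimal-capacity problem for its endpoints $a$ and $b$; and the Chebotarev set $\E(a,b)$ of two points is the straight segment $[a,b]$. Without some substitute for this chain --- or an honest verification that your ``Joukowski-type symmetry'' condition forces zero curvature --- the converse direction remains unproved. If you want to salvage your route, you would need to write down Alpan's equality criterion explicitly and then prove, not merely assert, that among $C^{2+\epsilon}$ arcs it is satisfied only by line segments; the $S$-property/Chebotarev argument is precisely the mechanism that accomplishes this.
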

	In \cite{Alpan2022}, Alpan showed that \eqref{Alpan} is always satisfied and gave a condition for when equality holds true in terms of the boundary behaviour of the Green's function. He also drew the conclusion that the inequality is strict if the arc fails to be analytic. Our contribution consists of showing that the only smooth arc for which equality holds is an interval. We show this using the $S$-property which was introduced and studied in detail by Stahl (see, e.g., \cite{Stahl85-I-II, Stahl85-III, Stahl2012, Martinez-Finkelshtein2011} and Definition \ref{def:s-property} below). To be more precise, our proof hinges on the connection  between the $S$-property and what are known as Chebotarev sets. 
	
	In this paper we shall exhibit several examples of sets $\E$ satisfying \eqref{eq:asymptotic_schiefermayr} and one of the properties that these sets have in common, apart from being polynomial preimages of an interval, is the fact that they all satisfy the $S$-property. This suggests that the $S$-property could be of importance for a set $\E$ to satisfy \eqref{eq:asymptotic_schiefermayr}. 
	
	Our studies were initiated by the following family of examples
	\begin{theoremm} \label{thm:widom-factors}
		For $m\in \N$, let 
		\begin{equation}
		\label{Em}
		 \E_m = \bigl\{z: z^m\in [-2,2]\bigr\}.
		\end{equation}  
		Then 
		\begin{equation}
		\cW_{mn,\infty}(\E_m)=2, \quad n\geq 1 
		\end{equation}
		and \eqref{eq:asymptotic_schiefermayr} holds true, that is, 
		\begin{equation} \label{limit n}
		 \lim_{n\rightarrow \infty}\cW_{n,\infty}(\E_m)= 2.
		\end{equation} 
		Moreover, 
		\begin{equation} \label{limit m}
		 \lim_{m\to\infty} \cW_{2m-1,\infty}(\E_m)= 4.
		\end{equation}
	\end{theoremm}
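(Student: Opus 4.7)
My plan is to exploit the rotational symmetry of $\E_m$ to reduce every claim to a (weighted) Chebyshev problem on $[-2,2]$. Since $\E_m$ is invariant under $z\mapsto \omega z$ with $\omega=e^{2\pi i/m}$, uniqueness of the Chebyshev polynomial (applied to $\omega^{-n}T_n^{\E_m}(\omega z)$, which is a monic polynomial of degree $n$ with the same supremum norm on $\E_m$) forces $T_n^{\E_m}(\omega z)=\omega^n T_n^{\E_m}(z)$. Writing $n=mk+r$ with $0\le r<m$, this restricts the nonzero coefficients of $T_n^{\E_m}$ to degrees $\equiv r\pmod m$, giving
\[T_n^{\E_m}(z)=z^r\,Q_n(z^m),\qquad Q_n \text{ monic of degree }k.\]
As $w=z^m$ ranges over $[-2,2]$ when $z$ ranges over $\E_m$, and $|z|^r=|w|^{r/m}$, we obtain
\[\|T_n^{\E_m}\|_{\E_m}=\max_{w\in[-2,2]}|w|^{r/m}\,|Q_n(w)|.\]
Combined with $\Cap(\E_m)=\Cap([-2,2])^{1/m}=1$ from the polynomial-preimage lemma applied to $P(z)=z^m$, the Widom factor $\cW_{n,\infty}(\E_m)$ equals the weighted Chebyshev extremum
\[E_k(r,m):=\min_{Q\text{ monic, }\deg Q = k}\ \max_{w\in[-2,2]}|w|^{r/m}\,|Q(w)|.\]

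For $n=mk$ (the case $r=0$) the weight is trivial, hence $E_k(0,m)=\|T_k^{[-2,2]}\|_{[-2,2]}=2$, yielding $\cW_{mn,\infty}(\E_m)=2$ together with the explicit formula $T_{mn}^{\E_m}(z)=T_n^{[-2,2]}(z^m)$. For $n=2m-1$ one has $k=1$ and $r=m-1$, so $Q_n(w)=w+c$ is monic linear. The endpoint values $|\pm 2|^{(m-1)/m}|{\pm 2}+c|=2^{(m-1)/m}(2\pm c)$ have maximum $2^{(m-1)/m}(2+|c|)$, which strictly exceeds $2^{(2m-1)/m}$ for every $c\neq 0$, while at $c=0$ the supremum over $[-2,2]$ of $|w|^{(2m-1)/m}$ equals exactly $2^{(2m-1)/m}$. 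Hence the minimum over $c$ is attained at $c=0$, and $\cW_{2m-1,\infty}(\E_m)=2^{(2m-1)/m}\to 4$ as $m\to\infty$, proving \eqref{limit m}.

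Equation \eqref{limit n} reduces to showing that for each fixed $r\in\{1,\ldots,m-1\}$, one has $E_k(r,m)\to 2$ as $k\to\infty$. The naive upper bound $E_k(r,m)\le 2^{1+r/m}$ from testing $Q=T_k^{[-2,2]}$ is not tight; the sharp asymptotic is the weighted Chebyshev statement
\[\lim_{k\to\infty}E_k(r,m)=2\exp\!\left(\frac{r}{m}\int_{-2}^{2}\log|w|\,d\mu_{[-2,2]}(w)\right)=2,\]
the final equality coming from $\int_{-2}^{2}\log|w|\,d\mu_{[-2,2]}(w)=0$, which follows at once from $w=2\cos\theta$ and $\frac{1}{\pi}\int_0^\pi\log|\cos\theta|\,d\theta=-\log 2$. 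This weighted asymptotic is the main obstacle of the proof: the upper bound requires constructing near-extremal polynomials that absorb the vanishing of $|w|^{r/m}$ at $w=0$ (a Szegő-function-style argument adapted to a weight that is not strictly positive), and the matching lower bound follows from a potential-theoretic comparison of $\log|Q|$ with its mean under $d\mu_{[-2,2]}$. The identity $\int\log|w|\,d\mu_{[-2,2]}=0$ is precisely what makes the limiting Widom factor equal $2$ and thereby places $\E_m$ among the sets that asymptotically saturate Schiefermayr's bound.
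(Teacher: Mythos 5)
Your argument is essentially the paper's: exploit the rotational symmetry to write $T_n^{\E_m}(z)=z^rQ_n(z^m)$, push the problem down to a weighted Chebyshev problem on an interval, and evaluate the resulting equilibrium-potential integral (which vanishes because $G_{[-2,2]}(0)+\log\Cap([-2,2])=0$); your treatment of $\cW_{mn,\infty}=2$ and of the degree-$(2m-1)$ case likewise matches the paper's, which gets $T_{2m-1}^{\E_m}(z)=z^{2m-1}$ directly from the coefficient constraints. The only substantive point is that the weighted asymptotic $\lim_k E_k(r,m)=2\exp\bigl(\tfrac{r}{m}\int\log|w|\,d\mu_{[-2,2]}\bigr)$, which you correctly identify as the crux and leave unproved, is precisely Bernstein's theorem on Chebyshev polynomials for weights with algebraic zeros (Theorem \ref{thm:Bernstein}); the paper invokes it too and reproduces Bernstein's proof in the appendix, so your proposal is complete modulo this classical citation. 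A cosmetic difference: you use only the order-$m$ rotation, so your weight $|w|^{r/m}$ has its zero at an interior point of $[-2,2]$, whereas the paper uses the full order-$2m$ symmetry and lands on the weight $(x+1)^{l/2m}$ with the zero at an endpoint of $[-1,1]$; both are covered by Bernstein's theorem.
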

	The latter part of the theorem shows that the limit in \eqref{limit n} by no means is uniform in $m$. It also exemplifies that no matter how large we take $n\in\N$, there always exist a compact connected set $\E\subset\C$ and $m\geq n$ so that $\cW_{m,\infty}(\E)>4-\epsilon$.	 
	\begin{figure}[h!]
			\centering
			\begin{minipage}{.33\textwidth}
		  		\centering
  				\includegraphics[width=.9\linewidth]{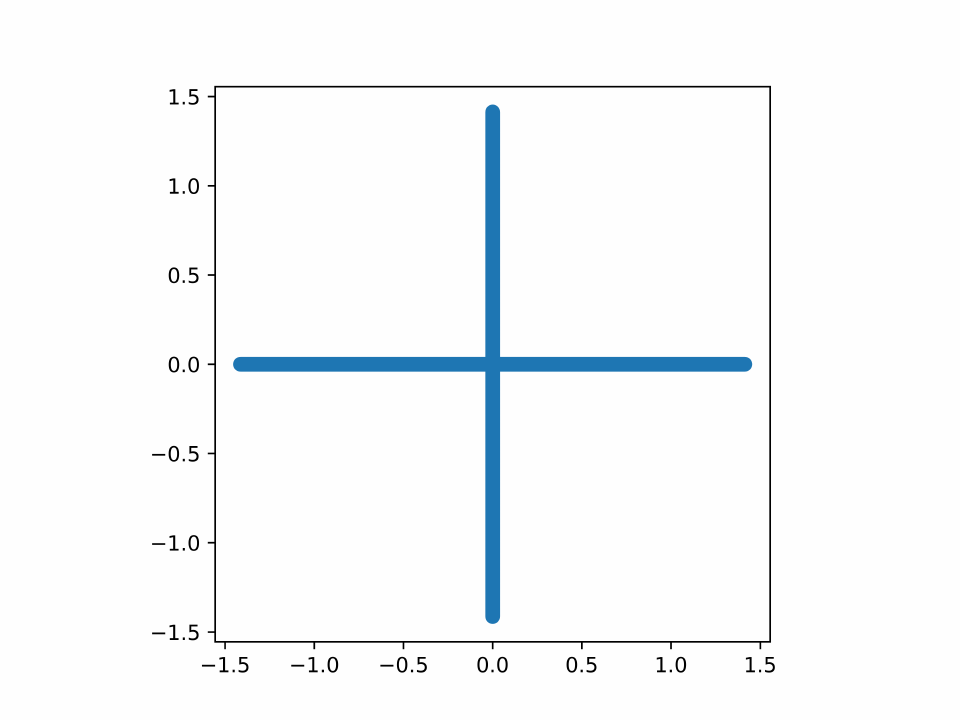}
				\caption{$\E_2$}
				\label{fig-E2}
			\end{minipage}%
			\begin{minipage}{.33\textwidth}
				\centering
				\includegraphics[width=.9\linewidth]{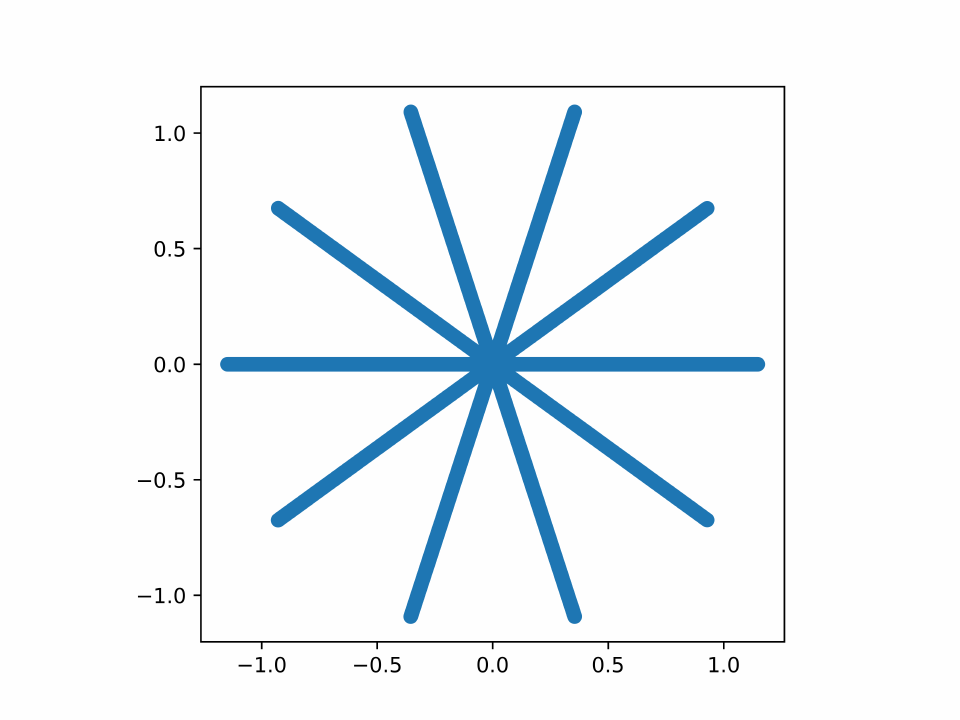}
				\caption{$\E_{5}$}
				\label{fig-E5}
			\end{minipage}
			\begin{minipage}{.33\textwidth}
		  		\centering
  				\includegraphics[width=.9\linewidth]{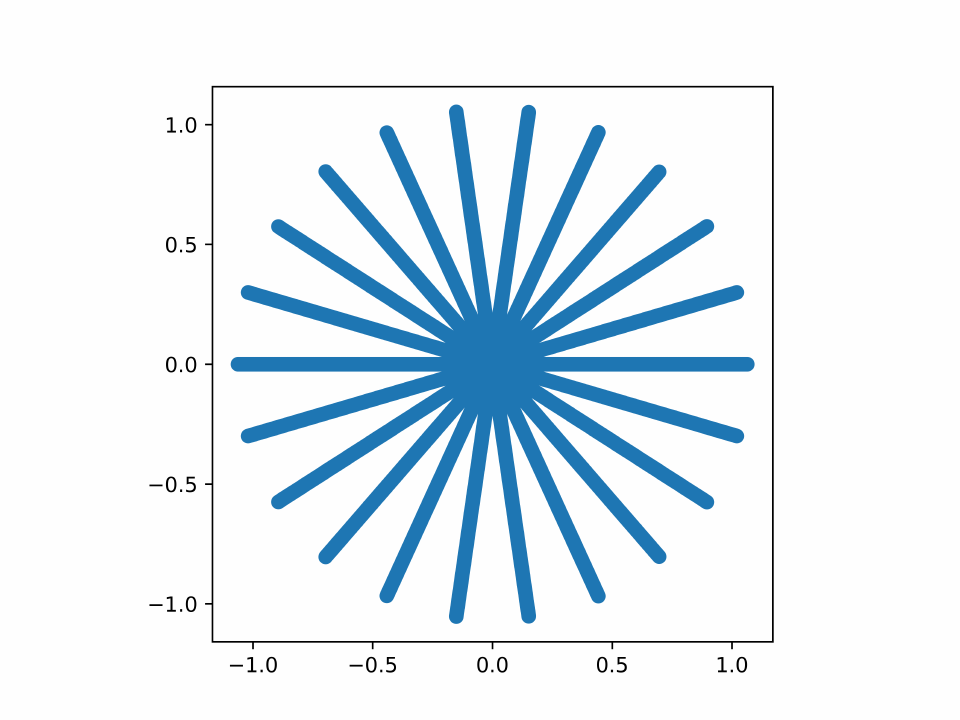}
				\caption{$\E_{11}$}
				\label{fig-E11}
			\end{minipage}%
		\end{figure}

	Extremal polynomials of the sets $\E_m$ were previously studied by Peherstorfer and Steinbauer \cite{PeherstorferSteinbauer2001}. They established a connection, for $q\in[0, \infty)$, between $L^q$ minimal polynomials on $\E_m$ and specific weighted $L^q$ minimal polynomials on $[0, 1]$, using the canonical change of variables $x=z^m$. In this paper, we will employ a similar change of variables, focusing on the case where $q=\infty$. This approach allows us to establish a relationship between the Chebyshev polynomials of $\E_m$ and weighted minimal polynomials on $[-1, 1]$.

	
	Theorem \ref{thm:widom-factors} handles preimages of a line segment under monomials in the complex plane. The following result gives a complete picture for such preimages of arbitrary quadratic polynomials.
	\begin{theoremm}
	\label{thm:general_quadratic}
		Let $P(z) = z^2+az+b$ for $a, b \in\C$ and form $\E_P = \{z:P(z)\in [-2,2]\}$. Then 
		\begin{equation}
				\cW_{2n,\infty}(\E_P)=2, \quad n\geq 1 
				\label{even} 
		\end{equation}
		and
		\begin{equation}		
				\lim_{n\rightarrow \infty}\cW_{2n+1,\infty}(\E_P)= \sqrt{2\left|c+\sqrt{c^2-4}\right|}, \label{odd}
			\end{equation}
			where $c = b-a^2/4$ and $z+\sqrt{z^2-4}$ maps the exterior of $[-2,2]$ to the exterior of the closed disk of radius $2$ centered at $0$.  
In particular, for $c\in [-2,2]$ we have
		\begin{equation}		
			\lim_{n\rightarrow \infty}\cW_{2n+1,\infty}(\E_P)= 2. 
			\label{odd c}
		\end{equation}
	\end{theoremm}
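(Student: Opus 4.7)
The plan is to reduce both even and odd Chebyshev problems on $\E_P$ to (weighted) Chebyshev problems on $[-2,2]$ via the canonical change of variable provided by $P$. First, the translation $w = z + a/2$ does not affect capacities or Chebyshev norms, so I may assume $a = 0$ and $P(w) = w^2 + c$. By the preimage relation \eqref{eq:green_capacity_polynomial_preimage} one has $\Cap(\E_P) = \Cap([-2,2])^{1/2} = 1$, hence $\cW_{k,\infty}(\E_P) = \|T_k^{\E_P}\|_{\E_P}$. Each $t \in [-2,2]$ lifts to the two points $w_\pm = \pm\sqrt{t-c} \in \E_P$. Using that $\{1,w\}$ is a basis for $\C[w]$ over $\C[P(w)]$, division with remainder gives a unique representation of any monic $R$ of even degree $2n$ as
\[
R(w) = S_1(P(w)) + w\,S_2(P(w)),\qquad S_1 \text{ monic of degree } n,\ \deg S_2 \le n-1,
\]
and of any monic $R$ of odd degree $2n+1$ as $R(w) = w\,S_1(P(w)) + S_2(P(w))$ with $S_1$ monic of degree $n$ and $\deg S_2 \le n$.

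For the even case, on $\E_P$ one has $R(w_\pm) = S_1(t) \pm \sqrt{t-c}\,S_2(t)$, and the elementary inequality $\max(|A+B|,|A-B|) \ge |A|$ (a consequence of the parallelogram identity) yields $\|R\|_{\E_P} \ge \|S_1\|_{[-2,2]}$. For any monic $S_1$ of degree $n$ with complex coefficients one has $\|S_1\|_{[-2,2]} \ge 2$: writing $S_1 = U + iV$ with $U, V$ real polynomials (so $U$ is real monic of degree $n$), apply \eqref{eq:schiefermayr_inequality} to $U$. Equality is attained by $S_2 \equiv 0$, $S_1 = T_n^{[-2,2]}$, so $T_{2n}^{\E_P}(w) = T_n^{[-2,2]}(P(w))$ and \eqref{even} follows. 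For the odd case, $R(w_\pm) = \pm\sqrt{t-c}\,S_1(t) + S_2(t)$, and the same inequality gives $\max_\pm|R(w_\pm)| \ge \sqrt{|t-c|}\,|S_1(t)|$ with pointwise equality when $S_2(t) = 0$. Since $S_2 \equiv 0$ is admissible, we conclude
\[
\|T_{2n+1}^{\E_P}\|_{\E_P} = \min_{S_1 \text{ monic}, \deg n}\ \max_{t \in [-2,2]}\sqrt{|t-c|}\,|S_1(t)|,
\]
reducing the odd Chebyshev problem to a weighted Chebyshev problem on $[-2,2]$ with weight $\rho(t) := \sqrt{|t-c|}$.

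To finish, I would invoke the Bernstein--Szeg\H{o}--Widom asymptotic for weighted Chebyshev norms on an interval with a log-integrable weight:
\[
\lim_{n\to\infty}\min_{S_1}\max_{[-2,2]}\rho\,|S_1| = 2\exp\!\left(\int\log\rho\,d\mu_{[-2,2]}\right).
\]
Computing the integral via \eqref{rel} with $\Cap([-2,2]) = 1$ gives $\int\log|t-c|\,d\mu_{[-2,2]}(t) = G_{[-2,2]}(c) = \log|\varphi(c)|$, where $\varphi(z) = (z+\sqrt{z^2-4})/2$ is the conformal map from the exterior of $[-2,2]$ onto the exterior of $\overline{\D}$. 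Hence the limit equals $2\sqrt{|\varphi(c)|} = \sqrt{2|c+\sqrt{c^2-4}|}$, proving \eqref{odd}, and when $c \in [-2,2]$ one has $|\varphi(c)| = 1$, giving \eqref{odd c}. The main technical obstacle is this last asymptotic step when $c \in [-2,2]$: the weight $\rho$ then has a square-root zero at $t = c$, so the cleanest smooth-weight version of Bernstein's theorem does not apply and one must invoke its extension to Szeg\H{o}-class weights with log-integrable singularities.
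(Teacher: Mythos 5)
Your proof is correct and follows essentially the same route as the paper: reduce to $P_c(z)=z^2+c$, show that the odd-degree Chebyshev problem on $\E_{P_c}$ is equivalent to a weighted Chebyshev problem on an interval with weight $\sqrt{|t-c|}$, and conclude via Bernstein's asymptotic formula together with the evaluation of the equilibrium logarithmic potential (the paper's Theorem \ref{thm:Bernstein} and Lemma \ref{lem:log_integral_equilibrium_measure}, which in particular cover the case $c\in[-2,2]$ where the weight vanishes). The only cosmetic difference is that you obtain the reduction (and the even-degree identity $T_{2n}^{\E_P}=T_n^{[-2,2]}\circ P$) from a division-with-remainder decomposition combined with the inequality $\max(|A+B|,|A-B|)\ge\max(|A|,|B|)$, whereas the paper invokes the Kamo--Borodin lemma \eqref{eq:kamo_borodin} and the $z\mapsto -z$ symmetry together with uniqueness of the Chebyshev polynomial.
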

\begin{remark}	
	Note that $c\in [-2,2]$ if and only if $\E_P$ is connected. Hence, for $\E_P$ a quadratic preimage of $[-2,2]$, we have 
	\[\lim_{n\rightarrow \infty}\cW_{n,\infty}(\E_P)=2\]
	precisely when $\E_P$ is connected. The result in \eqref{odd} also illustrates that we can only hope for a universal upper bound on $\cW_{n, \infty}(\E)$ within the class of compact connected sets.
\end{remark}

	\subsection{Orthogonal polynomials with respect to $\mu_\E$}  \label{OP} 
	
	The Chebyshev polynomials are the monic polynomials minimising the $L^\infty$ norm on a given compact set $\E\subset\C$. The same investigation on minimal polynomials can be undertaken for any $L^p$ norm. 
	We shall in particular consider the sequence of monic orthogonal polynomials with respect to equilibrium measure $\mu_\E$. These are the polynomials 
	\[
		P_{n}^{\mu_\E}(z)=z^{n}+\sum_{k=0}^{n-1}c_kz^k
	\]
	which satisfy 
	\[\int P_n^{\mu_\E}(z)\overline{\displaystyle{P_m^{\mu_\E}(z)}}\,d\mu_{\E}(z) = C_n\cdot\delta_{n,m}, \]
	where $C_n>0$ and $\delta_{n,m}$ is the Kronecker delta. 
	It is well-known and easy to prove that all $P_{n}^{\mu_\E}$ are minimal with respect to the $L^2(\mu_{\E})$-norm in the sense that
	\[
		\Vert P_{n}^{\mu_\E}\Vert_{L^2(\mu_\E)}^2 :=\int |P_{n}^{\mu_\E}(z)|^2 \,d\mu_{\E}(z) = \min_{b_0, b_1, \ldots, b_{n-1}\in\C} \int \left|z^n+\sum_{k=0}^{n-1}b_kz^k\right|^2d\mu_{\E}(z).
	\]
	
	In line with \eqref{eq:widom_factor_def}, we define the Widom factors 
	corresponding to the $L^2$ minimisers on $\E$ by
	\begin{align}
		\cW_{n,2}(\E) & := \frac{\|\displaystyle{P_n^{\mu_\E}}\|_{L^2(\mu_\E)}}{\Cap(\E)^n}.
	\end{align}
	The name is appropriate since Widom \cite{Widom1969} gave a complete description of the asymptotics of $\cW_{n,2}(\E)$ in the case where $\E$ is a finite union of Jordan curves and arcs of class $C^{2+\epsilon}$. Recent results of Alpan and Zinchenko \cite{AlpanZinchenko2020} suggest a relation between the asymptotics of $\cW_{n,2}(\E)$ and $\cW_{n,\infty}(\E)$. In fact, the method we will use to prove Theorem \ref{thm:widom-factors} relies on results of Bernstein \cite{Bernstein:1930-31} 
	where certain weighted Chebyshev polynomials are related to a class of orthogonal polynomials. This is what initially motivated our study of $\cW_{n,2}(\E_m)$ 
	and we have the following result.
	\begin{theoremm}
		With $\E_m$ as defined in \eqref{Em}, 
		we have 
		\[
			\lim_{n\rightarrow \infty}\cW_{n,2}(\E_m)^2 = 2.
		\]
		Moreover, $n\mapsto\cW_{2nm+l,2}(\E_m)^2$
		is monotonically increasing for $0<{l}/{m}<1$ and monotonically decreasing for $1<{l}/{m}<2$.
		\label{thm:widom_factors_orthogonal}
	\end{theoremm}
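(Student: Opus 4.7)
The plan is to exploit the rotational symmetry $z\mapsto \omega z$, $\omega=e^{2\pi i/m}$, under which both $\E_m$ and $\mu_{\E_m}$ are invariant. Uniqueness of the monic $L^2(\mu_{\E_m})$ minimiser forces the eigenfunction identity $P_N^{\mu_{\E_m}}(\omega z)=\omega^N P_N^{\mu_{\E_m}}(z)$, so only powers $z^k$ with $k\equiv N\pmod{m}$ can occur. Writing $N=qm+r$ with $0\leq r<m$, this gives
\begin{equation*}
P_N^{\mu_{\E_m}}(z)=z^{r}Q_q(z^m)
\end{equation*}
for a monic polynomial $Q_q$ of degree $q$. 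Since $z\mapsto z^m$ pushes $\mu_{\E_m}$ forward to $\mu_{[-2,2]}=dx/(\pi\sqrt{4-x^2})$ and $\Cap(\E_m)=1$, the change of variable $x=z^m$ turns the quantity of interest into
\begin{equation*}
\cW_{N,2}(\E_m)^2=\min_{Q}\int_{-2}^{2}|x|^{2r/m}Q(x)^2\,d\mu_{[-2,2]}(x),
\end{equation*}
the minimum being over monic $Q$ of degree $q$.

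For the limit I would invoke Szeg\H{o}'s theorem for orthogonal polynomials on $[-2,2]$. The weight $w(x)=|x|^{2r/m}$ lies in the Szeg\H{o} class, and evaluating \eqref{rel} at $z=0$ (noting $0\in[-2,2]$ and $\Cap([-2,2])=1$) yields $\int_{-2}^{2}\log|x|\,d\mu_{[-2,2]}(x)=G_{[-2,2]}(0)=0$. Hence $\int\log w\,d\mu_{[-2,2]}=0$ and Szeg\H{o}'s theorem gives $\|Q_q\|_{L^2(w\,d\mu_{[-2,2]})}^2\to 2$ as $q\to\infty$. Since $N\to\infty$ forces $q\to\infty$ and there are only finitely many values of $r$, this proves $\cW_{N,2}(\E_m)^2\to 2$.

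For the monotonicity, with $N=2nm+l$, the even parity of $w$ forces $Q_{2n}(x)=R_n(x^2)$ and $Q_{2n+1}(x)=xR_n(x^2)$ with $R_n$ monic of degree $n$. The substitution $y=x^2$ collapses both cases ($q=2n$ when $0<l<m$, $q=2n+1$ when $m<l<2m$) to the same minimisation
\begin{equation*}
\frac{1}{\pi}\min_{R}\int_0^{4}y^{l/m-1/2}R(y)^2\,\frac{dy}{\sqrt{4-y}}
\end{equation*}
over monic $R$ of degree $n$. The further substitution $y=2(1+t)$ identifies $R$, up to a positive constant, with the monic Jacobi polynomial $P_n^{(-1/2,\beta)}$ where $\beta=l/m-1/2$, and the classical formula for the monic Jacobi norm $h_n^2(-1/2,\beta)$ yields
\begin{equation*}
\frac{\cW_{2(n+1)m+l,2}(\E_m)^2}{\cW_{2nm+l,2}(\E_m)^2}=4\cdot\frac{h_{n+1}^2(-1/2,\beta)}{h_n^2(-1/2,\beta)}=\frac{[(u-\beta)^2-1/4]\,[(u+\beta)^2-1/4]}{u^2(u^2-1)}
\end{equation*}
with $u=2n+\beta+3/2>1$. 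A direct expansion shows that the numerator minus the denominator equals $(\beta^2-1/4)(\beta^2-1/4-2u^2)$. In the relevant range $\beta\in(-1/2,3/2)\setminus\{1/2\}$, the second factor is strictly negative (since $2u^2>2\geq \beta^2-1/4$), so the sign of the ratio minus $1$ matches the sign of $1/2-\beta$, giving the claimed monotonicity. The main obstacle is the careful parity bookkeeping that collapses both parities of $q$ to a common Jacobi problem, together with the algebraic manipulation identifying the sign of the ratio.
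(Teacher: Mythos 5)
Your proposal is correct and follows essentially the same route as the paper: the rotational symmetry reduces $P_N^{\mu_{\E_m}}$ to a weighted extremal problem on an interval whose solution is a rescaled monic Jacobi polynomial $P_n^{(-1/2,\,l/m-1/2)}$, and the monotonicity comes from the same ratio of consecutive Jacobi norms (your factorisation in terms of $u$ and $\beta$ reproduces the paper's $\gamma_{n+1}(s)/\gamma_n(s)$ exactly). The only divergence is in extracting the limit: the paper evaluates the Jacobi norm in closed form and applies Legendre duplication together with $\Gamma(x+a)/\Gamma(x+b)\sim x^{a-b}$, whereas you invoke Szeg\H{o}'s theorem and the identity $\int\log|x|\,d\mu_{[-2,2]}(x)=0$ --- both are fine, though since the explicit Jacobi norm is already needed for the monotonicity, the paper's route gets the limit for free.
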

	Exactly the same monotonicity along subsequences was observed numerically for $\cW_{n,\infty}(\E_m)$. However, without explicit formulas at hand this seems hard to prove. By relating Theorems \ref{thm:widom-factors} and \ref{thm:widom_factors_orthogonal}, we get the following result which complements the results of \cite{AlpanZinchenko2020}.
	\begin{corollary}
		The Widom factors for $\E_m$ satisfy that
		\begin{equation}
			\lim_{n\rightarrow \infty}\cW_{n,2}(\E_m)^2 = \lim_{n\rightarrow \infty}\cW_{n,\infty}(\E_m).
			\label{eq:widom_factor_equality}
		\end{equation}
	\end{corollary}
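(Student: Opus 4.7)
The statement follows immediately by combining the two theorems just established in this subsection. The plan is to quote Theorem \ref{thm:widom-factors}, specifically the limit \eqref{limit n}, to obtain
\[
\lim_{n\to\infty}\cW_{n,\infty}(\E_m)=2,
\]
and to quote Theorem \ref{thm:widom_factors_orthogonal} for
\[
\lim_{n\to\infty}\cW_{n,2}(\E_m)^2=2.
\]
Since both limits exist and coincide, the identity \eqref{eq:widom_factor_equality} follows at once.

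No obstacle arises in this deduction itself; all the substantive work is already carried by the two theorems invoked. The genuinely interesting point, and what motivates stating the result as a corollary rather than a passing remark, is conceptual: the $L^\infty$ and $L^2$ extremal problems on $\E_m$ share the same leading-order norm asymptotics, a coincidence that is not a priori obvious. This parallels the results of Alpan and Zinchenko in \cite{AlpanZinchenko2020} and supports the conjectural extension of \cite{CSZ-review} mentioned in the introduction. From the proofs of the two theorems, the common mechanism is traceable to the change of variables $x=z^m$, which reduces both extremal problems on $\E_m$ to weighted extremal problems on $[-1,1]$ whose asymptotics are classical and compatible.
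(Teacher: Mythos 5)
Your proposal is correct and matches the paper exactly: the corollary is obtained by juxtaposing the limit \eqref{limit n} from Theorem \ref{thm:widom-factors} with the conclusion of Theorem \ref{thm:widom_factors_orthogonal}, both of which equal $2$. No further argument is needed or given in the paper.
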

	Based on the results of \cite{AlpanZinchenko2020}, it was conjectured in \cite{CSZ-review} that if $\E$ is a smooth Jordan arc, then
	\begin{equation}
	\lim_{n\rightarrow \infty}\cW_{n,2}(\E)^2 = \lim_{n\rightarrow \infty}\cW_{n,\infty}(\E).
	\label{W=W}
	\end{equation}
	It is known that \eqref{W=W} holds for straight line segments and circular arcs as in \eqref{eq:thiran_detaille}, see \cite{CSZ-review} and \cite{AlpanZinchenko2020}. In our case we are not dealing with a single Jordan arc but rather a union of Jordan arcs. Nevertheless, our approach may still shed new light on the above conjecture.
	
	\subsection{Outline}
	
This article is organised as follows. 
   In Section \ref{sec:s_property_chebotarev} we discuss the $S$-property and the Chebotarev problem, and use these concepts to present a proof of Theorem \ref{thm:widom_factor_arc}. 
   In Section \ref{sec:quadratic} we consider Chebyshev polynomials on quadratic preimages of $[-2,2]$ and illustrate how Bernstein's method for determining the asymptotic behaviour of weighted Chebyshev polynomials on an interval (summarised in the appendix) can be applied to solve problems in the complex plane. We specifically establish Theorem \ref{thm:widom-factors} for $m=2$ and extend this result to Theorem \ref{thm:general_quadratic}, which completely describes the asymptotic behavior of the Widom factors for quadratic preimages of $[-2,2]$.  
   The findings for $\E_2$ are extended to the setting of all $\E_m$ sets in Section \ref{sec:m_star}, demonstrating the alternation properties of Chebyshev polynomials in this context and providing a complete proof of Theorem \ref{thm:widom-factors}.



    In Section \ref{sec:orthogonal} we consider polynomials orthogonal with respect to equilibrium measure of $\E_m$ and prove Theorem \ref{thm:widom_factors_orthogonal}, that the associated Widom factors converge to $2$.  We also show that the numerically observed monotonic behavior of the corresponding Chebyshev norms holds for the orthogonal polynomials in this setting. 
    Additional insight into this monotonicity is presented in Section \ref{first}, drawing upon conformal mappings and geometric considerations.
    Ultimately, we address potential future research directions in Section \ref{sec:outlook}. With Conjecture \ref{conj}, we propose a potential connection between Shabat polynomial preimages and the convergence of Widom factors to $2$. This conjecture finds support in numerical simulations. We hold the view that a fundamental prerequisite for sets to exhibit such Widom factors lies in the geometric $S$-property.
			
	\section{The $S$-property and Chebotarev sets}
	\label{sec:s_property_chebotarev}
	We now direct our focus towards compact sets $\E\subset\C$ exhibiting a certain symmetry property which turns out to be of importance regarding the convergence behaviour $\lim_{n\rightarrow \infty}\cW_{n}(\E)=2$. In fact, this is exactly what we will use to prove Theorem \ref{thm:widom_factor_arc}.
	
	The symmetry property in question is called the $S$-property and was introduced by Stahl in the 1980s to study certain extremal domains in the complex plane. The ``$S$'' in the name is therefore ambiguous; it can be read as \emph{symmetry} but also as \emph{Stahl}. We will present a simplified version of the $S$-property, adapted to our needs, following \cite[Definition 2]{Stahl2010}. For a more comprehensive exploration of its connection to extremal domains of meromorphic functions, we direct the reader to \cite{Stahl85-I-II, Stahl85-III, Stahl2012}.		

    \begin{definition}          \label{def:s-property}
        Let $\E\subset \C$ be a compact set with $\Cap(\E)>0$ and suppose $\C\setminus \E$ is connected. Assume further that there exists a subset $\E_0\subset \E$ with $\Cap(\E_0) = 0$ such that
        \[
           \E\setminus \E_0 = \bigcup_{i\in I}\gamma_i
        \]
        where the $\gamma_i$'s are disjoint open analytic Jordan arcs and $I\subset \N$. Then $\E$ is said to satify the $S$-property if
        \begin{equation}
			\frac{\partial G_\E}{\partial n_+}(z) = \frac{\partial G_\E}{\partial n_-}(z)
			\label{eq:equal_green}
		\end{equation}
        holds for all $z\in \E\setminus \E_0$, where $n_+$ and $n_-$ denote the unit normals from each side of the arcs.
    \end{definition}
    \begin{remark}
    The assumption of the arcs $\gamma_i$ being analytic is redundant, as mild smoothness conditions coupled with the fulfillment of \eqref{eq:equal_green} imply the analyticity of these arcs. This observation is mentioned in \cite{Stahl2010} without providing a formal proof.    

     The initial question to address in the context of reducing the smoothness of individual arcs pertains to the existence of the normal derivatives in \eqref{eq:equal_green}. The proof of \cite[Proposition 2.2]{Totik2014_2} invokes the Kellogg--Warschawski theorem \cite[Theorem 3.6]{Pommerenke} to establish that if the coordinates of the arcs are differentiable and their derivatives are H\"{o}lder continuous, then the normal derivatives of $G_\E$ exist along each $\gamma_i$. Consequently, it is reasonable to discuss the fulfillment of \eqref{eq:equal_green} for arcs belonging to the class $C^{2+\epsilon}$.
     In \cite[Theorem 1.4]{Alpan2022}, it is established that with this smoothness assumption, the arcs $\gamma_i$ indeed become analytic when \eqref{eq:equal_green} is satisfied.
    \end{remark}
     The concept of \emph{Chebotarev sets} is closely related to the $S$-property. These sets are characterized by having minimal capacity while being constrained to include a specific collection of points. To elaborate, the Chebotarev problem, originally posed as a question to P\'olya \cite{Polya}, revolves around the quest for the following set:	
	\begin{definition}
	Given a finite number of points $\alpha_1,\ldots,\alpha_m\in \C$, the compact connected set $\E$ that contains these points and has minimal logarithmic capacity among all such sets is called the {Chebotarev set} of $\alpha_1,\ldots,\alpha_m$. It is denoted by $\E(\alpha_1,\dotsc,\alpha_m)$.
	\end{definition}

That such sets exist and are unique was proven by Gr\"{o}tzsch \cite{Grotzsch:1930}. He also characterised the Chebotarev sets in terms of the behaviour of certain quadratic differentials. 
	Stahl \cite[Theorem 11]{Stahl2012} proved that any solution to a Chebotarev problem must satisfy the $S$-property. 
	In fact, he gave both necessary and sufficient conditions for a set to be a Chebotarev set in terms of the $S$-property and additional geometric conditions. 
	For further properties of Chebotarev sets, see, e.g., \cite[Chapter 1]{Kuzmina1982}. The case of $m=3$ is studied in detail in this monograph.  
	
	Interestingly, there is also a relation between Chebotarev sets and polynomial preimages of intervals. Schiefermayr \cite{Schiefermayr:2014} proved that if $P$ is a polynomial and $P^{-1}([-1,1])$ is connected, then this preimage is the solution to a Chebotarev problem. More specifically, if $\alpha_1,\ldots,\alpha_m$ is an enumeration of the distinct simple zeros of $P^2-1$ then
	\[P^{-1}([-1,1]) = \E(\alpha_1,\dotsc,\alpha_m).\]
In particular, any such polynomial preimage satisfies the $S$-property.	 
	
	
  
 By combining Stahl's results on minimal capacity with recent results of Alpan, 
	we can easily prove that equality is possible in \eqref{Alpan} only for straight line segments. In this case, as we know, $\cW_{n,\infty}(\E)=2$ for every $n$.

	\begin{proof}[Proof of Theorem \ref{thm:widom_factor_arc}]
		Let $\E$ be a Jordan arc of class $C^{2+\epsilon}$, connecting two complex points $a$ and $b$. The fact that 
		\[\limsup_{n\rightarrow \infty}\cW_{n,\infty}(\E)\leq 2\]
		is precisely the content of \cite[Theorem 1.2]{Alpan2022}. For equality to hold, it must be so that \eqref{eq:equal_green} holds at all interior points of the arc. As explained in the remark following Definition \ref{def:s-property}, this implies that $\E$ is an analytic Jordan arc possessing the $S$-property. By \cite[Theorem 11]{Stahl2012}, $\E$ is therefore the solution to the Chebotarev problem corresponding to the points $a$ and $b$. Hence $\E(a,b) = [a,b]$, where $[a,b]$ denotes the straight line segment in $\C$ between $a$ and $b$ (see, e.g., \cite{Kuzmina1982}). This completes the proof.
	\end{proof}
	In the remainder of this article, we will consider polynomial preimages of $[-2,2]$. All the sets to be considered have in common that they satisfy the $S$-property and hence are minimal sets for a Chebotarev problem.
	
	\section{Quadratic preimages}
	\label{sec:quadratic}
The general framework is sets of the form
		\[
		 \E_m := \bigl\{z: z^m\in [-2,2]\bigr\}.
		\]
These are star-shaped connected sets which are invariant under rotations by $\pi/m$ radians, see Figures \ref{fig-E2}--\ref{fig-E11}. Furthermore, \cite[Theorem 2]{Schiefermayr:2014} implies that $\E_m$ is the Chebotarev set corresponding to the collection of points
		\[
			\bigl\{2^{1/m}e^{i \pi k/m}: k=1,\dotsc,2m \bigr\}.
		\]
		
		To begin with, we consider the case where $m=2$ even though our results are true for any $m\in\N$. The motivation for this is that we more transparently can provide an intuition for the problem when $m=2$. 
		We end the section by also considering the setting of a general quadratic preimage. 
		
		Consider the set
		\[
			\E_2 = \bigl\{z:z^2\in [-2,2]\bigr\} = \bigl[-\sqrt{2},\sqrt{2}\,\bigr]\cup i\bigl[-\sqrt{2},\sqrt{2}\,\bigr],
		\]
		which has the shape of a ``plus sign''.
		Equation \eqref{eq:green_capacity_polynomial_preimage} implies that the Green's function for $\overline{\C}\setminus\E_2$ with pole at $\infty$ is given by
		\[G_{\E_2}(z) = \frac{G_{[-2,2]}(z^2)}{2} = \log|z|+o(1),\quad z\rightarrow \infty.\]
		From this we also see that $\Cap(\E_2) = 1$ and hence 
		\[
		\cW_{n,\infty}(\E_2) = \|T_n^{\E_2}\|_{\E_2}.
		\] 
		
		Recall now that a lemma from \cite{KamoBorodin1994} states that if $\E$ is an infinite compact set and $P$ a polynomial of degree $m$ with leading coefficient $a_m$, then
		\begin{equation}
			T_{nm}^{P^{-1}(\E)} = (T_n^{\E}\circ P)/{a_m^{n}} .
			\label{eq:kamo_borodin}
		\end{equation} 
		For a recent proof, see \cite{CSZ-III}. In our setting this immediately implies that
		\[
			T_{2n}^{\E_2}(z) = T_{n}^{[-2,2]}(z^2),
		\]
		as was also proven in \cite{PeherstorferSteinbauer2001}. 
		Therefore, all the Chebyshev polynomials of even degree for $\E_2$ can be determined explicitly. This further implies that we can calculate ``half'' of the norms, that is, 
		\[
			\cW_{2n}(\E_2) = \|T_{2n}^{\E_2}\|_{\E_2} = \|T_{n}^{[-2,2]}\|_{[-2,2]} = 2.
		\]
		
		What is left to determine are the Chebyshev polynomials of odd degree for $\E_2$. The set $\E_2$ is symmetric 
		since it is invariant under rotations by $\pi/2$ radians. More precisely,
		\[
			i\E_2 = \{iz: z\in \E_2\} = \E_2,
		\]
		and by uniqueness of the Chebyshev polynomials it thus follows that
		\[
			(-i)^nT_{n}^{\E_2}(iz) = T_{n}^{\E_2}(z).
		\]
		From this relation we get certain conditions on the coefficients of $T_{2n+1}^{\E_2}$. Several of them will vanish and considering degrees of the form $4n+l$ with $l\in\{1, 3\}$, we have
		\begin{align}
		\begin{split}
			T_{4n+l}^{\E_2}(z) &= z^{4n+l}+\sum_{k=0}^{n-1}a_k z^{4k+l}, 
			\quad a_k\in \bbR. 
		\end{split}
		\label{eq:chebyshev_odd_sequences}
		\end{align}
		In particular, $T_{4n+l}^{\E_2}$ has a zero at the origin of order at least $l$.
		However, as we shall see below, this is the precise order of the zero. 
		
Before stating our main characterisation of $T_{4n+l}^{\E_2}$, we recall that an alternating set for a function $f:I\rightarrow \bbC$ is an ordered sequence of points $\{x_k\}\subset I$ such that
		\[f(x_k)= \sigma(-1)^{k}\|f\|_I,\quad |\sigma|=1.\]
		\begin{lemma}
			For $l\in \{1,3\}$, the Chebyshev polynomial $T_{4n+l}^{\E_2}$ is characterised by alternation in the following way:
			\begin{itemize}
				\item $T_{4n+l}^{\E_2}$ has alternating sets consisting of $n+1$ points on each of the sets $i^{k}[ 0,\sqrt{2} ]$, $k=0,1,2, 3.$ \vspace{0.1cm}
				\item $T_{4n+l}^{\E_2}$ can be represented as in \eqref{eq:chebyshev_odd_sequences}.
			\end{itemize}
			\label{lem:alternation_lemma_deg_2}
		\end{lemma}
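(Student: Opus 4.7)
The plan is to reduce the complex extremal problem on $\E_2$ to a weighted Chebyshev problem on the real interval $[0,4]$, exploiting both the four-fold rotational symmetry of $\E_2$ and its invariance under complex conjugation.

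First I would establish the structural form. Since $\E_2=\overline{\E_2}$, uniqueness of $T_{4n+l}^{\E_2}$ forces its coefficients to be real, and combining this with the identity $(-i)^{4n+l}T_{4n+l}^{\E_2}(iz)=T_{4n+l}^{\E_2}(z)$ coming from the symmetry $i\E_2=\E_2$ already used in the text, every exponent $k$ with a nonzero coefficient must satisfy $k\equiv l\pmod{4}$. This directly gives the representation \eqref{eq:chebyshev_odd_sequences} and lets me factor
$$T_{4n+l}^{\E_2}(z)=z^l\, Q_n(z^4)$$
for a unique monic polynomial $Q_n$ of degree $n$ with real coefficients.

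Next I would carry out the key reduction. Parametrising the arm $i^k[0,\sqrt{2}\,]$ by $z=i^k x$ with $x\in[0,\sqrt{2}\,]$, one has $T_{4n+l}^{\E_2}(i^k x)=i^{kl} x^l Q_n(x^4)$, so $|T_{4n+l}^{\E_2}(z)|=x^l|Q_n(x^4)|$ is independent of the arm. Substituting $y=x^4$ then identifies
$$\|T_{4n+l}^{\E_2}\|_{\E_2}=\max_{y\in[0,4]} y^{l/4}|Q_n(y)|,$$
so that minimising $\|T_{4n+l}^{\E_2}\|_{\E_2}$ (which, by the first step, is carried out over polynomials of the form $z^l Q(z^4)$) is equivalent to minimising $\|y^{l/4} R(y)\|_{[0,4]}$ over monic polynomials $R$ of degree $n$. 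This is a classical weighted Chebyshev problem on an interval with continuous weight $w(y)=y^{l/4}$, positive on $(0,4]$.

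The final step is to invoke the standard weighted alternation theorem for this problem (recalled in the appendix): the unique minimiser $Q_n$ is characterised by an alternating set $y_0<y_1<\ldots<y_n$ in $[0,4]$ with $y_j^{l/4}Q_n(y_j)=(-1)^{n-j}\|T_{4n+l}^{\E_2}\|_{\E_2}$. Pulling back via $x_j=y_j^{1/4}$ and then $z_j^{(k)}=i^k x_j$ yields $T_{4n+l}^{\E_2}(z_j^{(k)})=i^{kl}(-1)^{n-j}\|T_{4n+l}^{\E_2}\|_{\E_2}$, so each arm carries an alternating set of $n+1$ points with unimodular constant $\sigma_k=i^{kl}(-1)^n$. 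The converse direction of the characterisation follows in the same way from uniqueness of the weighted Chebyshev minimiser. The main delicate point I foresee is that the weight $y^{l/4}$ vanishes at the endpoint $y=0$: one must verify that the alternation theorem still applies in this degenerate weighted setting (the alternation points then lie in $(0,4]$), and that this endpoint behaviour is consistent with the zero of order $l$ at the origin dictated by the factor $z^l$.
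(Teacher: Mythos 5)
Your proof is correct, but it takes a genuinely different route from the paper's. The paper argues directly on $\E_2$: it invokes the general fact that a degree-$(4n+l)$ Chebyshev polynomial has at least $4n+l+1$ extremal points, uses the rotational symmetry to place at least $n+1$ of them on each arm, and then excludes two adjacent extrema of equal sign on an arm by counting zeros of the derivative $(T_{4n+l}^{\E_2})'$ against its degree $4n+l-1$; the converse is the same intermediate-value/zero-counting argument you hide inside the uniqueness half of your weighted alternation theorem. You instead push the whole problem down to a weighted Chebyshev problem on $[0,4]$ with weight $y^{l/4}$ and pull the equioscillation set back through $y=x^4$, $z=i^kx$. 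This is the same change of variables the paper performs later (to feed Bernstein's theorem and get the norm asymptotics), so your argument has the virtue of unifying the alternation lemma with the subsequent analysis and of explaining transparently why each arm carries exactly $n+1$ points. The one caveat: the weighted equioscillation theorem you invoke is \emph{not} actually recalled in the appendix --- the appendix only contains Achieser's explicit formula and Bernstein's norm asymptotics, neither of which is an exact alternation characterisation --- so you must supply it. It does hold for the weight $y^{l/4}$ despite the zero at $y=0$: since $w(0)=0<\|wQ\|$, the extremal set is compactly contained in $(0,4]$, so the standard perturbation argument (subtracting a small multiple of a degree-$\le n-1$ polynomial with prescribed signs on the sign-blocks of the extremal set) and the de la Vall\'ee Poussin uniqueness argument both go through verbatim. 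With that theorem stated and proved (or properly cited, e.g.\ to Achieser's book rather than to the appendix), your proof is complete; the paper's direct argument avoids this dependency at the cost of being specific to the geometry of $\E_2$.
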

		\begin{proof}
			We have already motivated why \eqref{eq:chebyshev_odd_sequences} should hold for $T_{4n+l}^{\E_2}$. This also implies that $T_{4n+l}^{\E_2}$ is purely real on the real axis and purely imaginary on the imaginary axis. Now, basic theory for Chebyshev polynomials implies that $T_{4n+l}^{\E_2}$ has at least $4n+l+1$ extremal points (see, e.g., \cite[Lemma 2.5.3]{Lorentz66}). Due to symmetry, there are at least $n+1$ extremal points on each of the rays 
			\[
			i^{k}[0,\sqrt{2}], \quad k=0, 1, 2, 3.
			\]
			It is enough to consider alternation on one of these rays since, by symmetry, the situation is entirely analogous on each of the rays. We therefore fix our attention to $[0,\sqrt{2}]$ where $T_{4n+l}^{\E_2}$ is real valued. If there were two adjacent extremal points on $[0,\sqrt{2}]$ with the same sign, then this would imply that the derivative $(T_{4n+l}^{\E_2})'$ should have at least $4\left(n+1\right)\geq 4n+l+1$ zeros. But this is a contradiction since the degree of the derivative is $4n+l-1$.
			
			To prove the other direction, we apply the intermediate value theorem in the following way. Suppose $Q$ is a polynomial which can be represented as in \eqref{eq:chebyshev_odd_sequences}. Then $Q$ will be real-valued on the real line and imaginary-valued on the imaginary axis. Assume further that $Q$ possesses alternating sets as in the statement of the lemma. If $Q\neq T_{4n+l}^{\E_2}$, then
			\[\|Q\|_{\E_2} > \|T_{4n+l}^{\E_2}\|_{\E_2}
			\]
			and it follows from the intermediate value theorem that $Q-T_{4n+l}^{\E_2}$ must have a zero between any two consecutive points in the alternating sets for $Q$. But this amounts to at least $4n+l$
			zeros in addition to the zero of multiplicity $l$ at $0$. Hence the number of zeros would be greater than the degree of $Q-T_{4n+l}^{\E_2}$ which is impossible. Therefore, we conclude that $Q = T_{4n+l}^{\E_2}$. 
		\end{proof}
		
		One implication of this lemma is the fact that all the zeros of $T_{2n+1}^{\E_2}$ are simple, except the one at $z=0$ for $T_{4n+3}^{\E_2}$. A further implication is that we can determine the first few Chebyshev polynomials for $\E_2$ explicitly:
		\begin{align*}
			T_{1}^{\E_2}(z)& = z,\\
			T_{3}^{\E_2}(z)&=z^3,\\
			T_{5}^{\E_2}(z) &= z^5-\frac{5\left(3-\frac{15^{2/3}}{\sqrt[3]{9+4\sqrt{6}}}+\sqrt[3]{15(9+4\sqrt{6})}\right)^4}{5184}z.
		\end{align*}
		Since already the expression for $T_5^{\E_2}$ gets rather complicated, there seems to be no simple closed form for $T_{2n+1}^{\E_2}$ when $n$ is large.
		
		Lemma \ref{lem:alternation_lemma_deg_2} also implies that the image of $\E_2$ under $T_{2n+1}^{\E_2}$ is again a plus-shaped set, namely 
		\[
			T_{4n+l}^{\E_2}(\E_2) = \bigcup_{k=0}^{3}i^k\bigl[0,\|T_{4n+l}^{\E_2}\|_{\E_2}\bigr] = \frac{\|T_{4n+l}^{\E_2}\|_{\E_2}}{\sqrt{2}}\E_2.
		\]
To further understand the behaviour of $\|T_{4n+l}^{\E_2}\|_{\E_2}$, we introduce the preimage
		\begin{equation} \label{En}
			\E_2^{n, l} := (T_{4n+l}^{\E_2})^{-1}\bigl(T_{4n+l}^{\E_2}(\E_2)\bigr) =  (T_{4n+l}^{\E_2})^{-1}\left(\bigcup_{k=0}^{3}i^k\bigl[0,\|T_{4n+l}^{\E_2}\|_{\E_2}\bigr]\right)
		\end{equation}
		which clearly contains $\E_2$ as a subset.
		Due to the alternating property described in Lemma \ref{lem:alternation_lemma_deg_2}, we deduce that $\E_{2}^{n,l}$ is connected and a finite union of Jordan arcs. These in turn intersect at the zeros of $T_{4n+l}^{\E_2}$. Moreover, $\E_{2}^{n,l}$ is nothing but the Chebotarev set corresponding to the points
		\[
			(T_{4n+l}^{\E_2})^{-1}\Bigl(\bigl\{i^k\|T_{4n+l}^{\E_2}\|_{\E_2}:k=0,1,2,3\bigr\}\Bigr).
		\]
Applying \eqref{eq:green_capacity_polynomial_preimage} and using the fact that $\Cap(\alpha \E) = |\alpha|\Cap(\E)$ for any $\alpha\in \C$, 
we see that
		\begin{equation}
		\label{cap nl}
			\Cap(\E_{2}^{n,l}) = \Cap\left(\frac{\|T_{4n+l}^{\E_2}\|_{\E_2}}{\sqrt{2}}\E_2\right)^{1/{4n+l}} = \left(\frac{\|T_{4n+l}^{\E_2}\|_{\E_2}}{\sqrt{2}}\right)^{1/{4n+l}}.
		\end{equation}
		Rearranging this equality leads to the identity
		\begin{equation}
			 \|T_{4n+l}^{\E_2}\|_{\E_2} = \sqrt{2}\Cap(\E_{2}^{n,l})^{4n+l}.
			 \label{eq:capacity_cheb_norm}
		\end{equation}
		Since $\E_{2}^{n,l}\supset\E_2$, we have $\Cap(\E_2^{n,l})\geq \Cap(\E_2) = 1$ and conclude that 
		\begin{equation}
			\|T_{4n+l}^{\E_2}\|_{\E_2}\geq \sqrt{2}.
			\label{lb}
		\end{equation}

		Note that the reasoning outlined above follows the same method of proof that we employed to prove the lower bounds of Szeg\H{o} and Schiefermayr in the introduction.
		However, the conclusion is not equally strong. The lower bound in \eqref{lb} is saturated for $T_{1}^{\E_2}$, but otherwise not. Using $zT_{2n}^{\E_2}(z)$ as a trial polynomial, we find that 
		\[\|T_{2n+1}^{\E_2}\|_{\E_2}\leq \|zT_{2n}^{\E_2}\|_{\E_2}\leq \sqrt{2}\|T_{2n}^{\E_2}\|_{\E_2} = 2^{3/2}.\]
		This upper bound is only saturated by $T_{3}^{\E_2}$. To close the gap between $\sqrt{2}$ and $2^{3/2}$ in the limit as $n\to\infty$, additional insight is needed. We shall settle this issue in the next subsection.
		
		At this point, we would like to highlight an intriguing observation that, to date, remains open. 
Recall that the Remez algorithm \cite{Remez34-I, Remez34-II}
		uses the theory of alternation to numerically compute Chebyshev polynomials of real compact sets. This algorithm was generalised to the complex setting by Tang 
		\cite{Tang1988} and further refined by Modersitzki and Fischer \cite{FischerModersitzki1994}. Using an implementation of this generalised algorithm, we can compute the norms of the Chebyshev polynomials for degree up to at least $60$. What materialised for $\E_2$ is illustrated in Figure \ref{fig:norm_plot}.
		\begin{figure}[h!]
			\centering
			\includegraphics[width=0.7\textwidth]{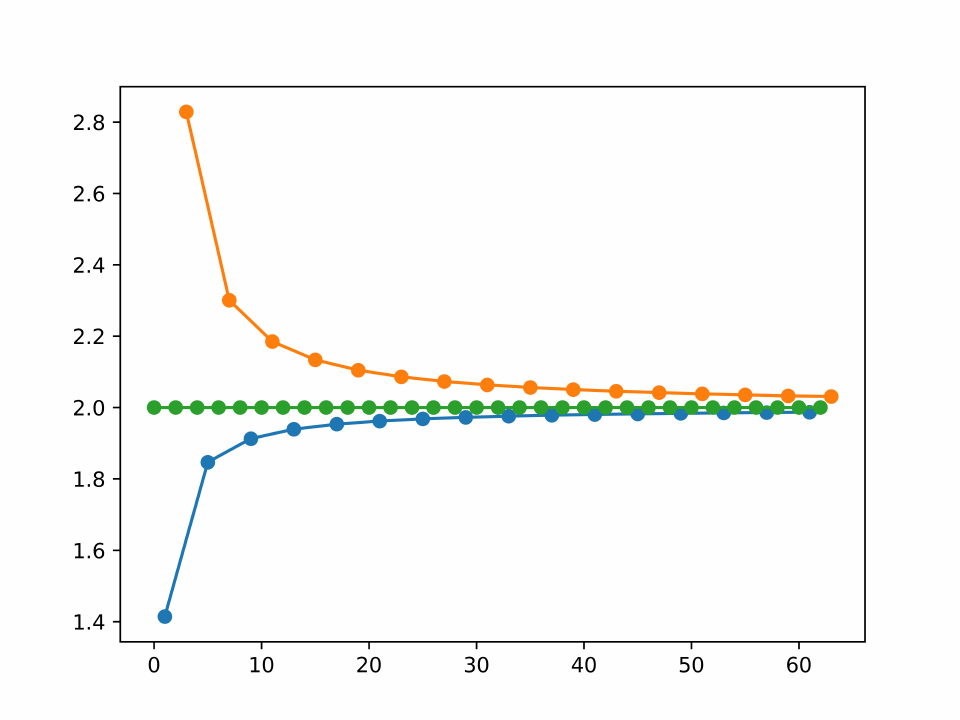}
			\caption{The norm of $T_n^{\E_2}$ plotted as a function of the degree $n$. The three natural subsequences are highlighted in different colors.}
			\label{fig:norm_plot}
		\end{figure}
		As we already know, $\|T_{n}^{\E_2}\|_{\E_2}=2$ for all even $n$. For odd $n$, there are two natural subsequences counting modulo $4$. It appears that $n\mapsto \|T_{4n+1}^{\E_2}\|_{\E_2}$ is monotonically increasing, while $n\mapsto \|T_{4n+3}^{\E_2}\|_{\E_2}$ appears to be monotonically decreasing. Similar patterns emerge for the general $\E_m$ sets. 					
		\subsection{A related weighted problem and the proof of Theorem \ref{thm:widom-factors} for $m=2$}
Rather than employing \eqref{eq:capacity_cheb_norm} and attempting to directly estimate the capacity of $\E_2^{n, l}$, we shall shift the minimax problem to a weighted problem on $\R$ which can, in turn, be solved using a method of Bernstein \cite{Bernstein:1930-31}.
		
		Based on \eqref{eq:chebyshev_odd_sequences}, the Chebyshev polynomials of odd degree for $\E_2$
		can be represented as
		\[
			T_{4n+l}^{\E_2}(z) = z^{4n+l}+\sum_{k=0}^{n-1}a_kz^{4k+l} = z^{l}\left(z^{4n}+\sum_{k=0}^{n-1}a_kz^{4k}\right) = z^lQ_{n,l}(z^4),
		\]
		where $Q_{n,l}$ is a real monic polynomial of degree $n$. Note that $Q_{n,l}$ has the property that it minimises
		\[
			\max_{z\in \E_2}| z^l | \bigl\vert Q_{n,l}(z^4) \bigr\vert
		\]
		among all monic polynomials of degree $n$. Since $|z^{1/4}|$ is single-valued, we can change the variables via the transformation $x = z^4/2-1$ and obtain that $Q_{n,l}$ minimises
		\[	
			\max_{x\in [-1,1]}2^{l/4}(x+1)^{{l}/{4}}\left|Q_{n,l}\bigl(2(x+1)\bigr)\right|. 
		\]
		Put differently, and noting that $2^{-n} Q_{n,l} (2(x+1) )$ is monic, we see that 
		\begin{equation}
			\|T_{4n+l}^{\E_2}\|_{\E_2} = 2^{n+{l}/{4}} 
			\min_{b_0, b_1, \ldots, b_{n-1}\in\R} \max_{x\in [-1,1]}(x+1)^{{l}/{4}}\left| x^n+\sum_{k=0}^{n-1} b_k x^k \right|.
			\label{eq:cheb_on_star_odd_sequence}
		\end{equation}
		
		In the case of $l=0$, we are back at the classical problem solved by Chebyshev and the polynomials $T_n$ of \eqref{Tn}. For $l=2$, the solution is given by the so-called Chebyshev polynomials of the third kind, denoted $V_n$. This is easily seen by noting that 
		\[
		   (x+1)^{1/2} V_n(x)=\sqrt{2} \cos\bigl( (n+1/2)\arccos(x)\bigr)
		\]    
		oscillates on $[-1, 1]$ between precisely $n+1$ extrema of equal magnitude. However, we are interested in the cases $l=1, 3$ where no such simple formulas seem to be available.

		The key is to recall that $T_n$ and $V_n$ are also orthogonal polynomials. Both are special cases of the Jacobi polynomials $P_n^{(\alpha, \beta)}$ which have been studied in detail and are orthogonal with respect to $(1-x)^\alpha (1+x)^\beta$ on $[-1, 1]$. 
		If we have a non-negative weight function $w$ defined on $[-1, 1]$, we will use the notation ${T_n^{w}}$ to represent the minimiser of
		\[
		 \max_{x\in [-1,1]}w(x)|P_n(x)|,
		\]   
where $P_n$ ranges over all monic polynomials of degree $n$. This minimizer is uniquely defined for sufficiently large $n$ under the conditions that $w$ does not vanish at infinitely many points and $wP$ is bounded for some polynomial $P$.
		A naive comparison of the $L^\infty$ and $L^2$ norms suggests that ${T_n^w}$ could be related to the monic orthogonal polynomials with respect to $w(x)^2/\sqrt{1-x^2}$ on $[-1,1]$. In our setting, the weight is given by
                \[
                     w_l(x)=(x+1)^{l/4}
                \]      
and we thus search for a relation between ${T_n^{w_l}}$ and $P_n^{(-1/2, \,\beta_l)}$ for $\beta_l=(l-1)/2$. 
		In special cases (such as $l=0, 2$) we have an exact match and the polynomials coincide. But even for general $l$, there are striking similarities between the polynomials. In \cite{Bernstein:1930-31}, Bernstein proved that $w_l P_n^{(-1/2, \, \beta_l)}$ is asymptotically alternating on $[-1,1]$ as $n\to\infty$ and because of that, 
		$P_n^{(-1/2, \, \beta_l)}$ are excellent trial polynomials when studying the limit of $\Vert w_l T_n^{w_l}\Vert_{[-1,1]}$.

		
		We shall formulate Bernstein's full result below. As the proof is only available in French or Russian, we decided to include a detailed review of this method in the appendix. 
While our presentation primarily adheres to Bernstein's original proof, we are able to streamline some arguments by integrating findings from Achieser and Chebyshev alongside Bernstein's work.

			
		\begin{theoremm}[Bernstein \cite{Bernstein:1930-31}]
			Suppose $\alpha_k\in \bbR$ and $b_k\in [-1,1]$ for $k=0, 1, \ldots, m$.  Consider a weight function $w:[-1,1]\rightarrow [0,\infty)$ of the form
			\begin{equation}  \label{weight}
				w(x) = w_0(x)\prod_{k=0}^{m}|x-b_k|^{\alpha_k},
			\end{equation}
			where $w_0$ is Riemann integrable and satisfies $1/M \leq w_0(x) \leq M$ for some constant $M\geq 1$. Then 
			\begin{equation}  \label{B asymp}
				\Vert w T_n^w \Vert_{[-1, 1]}=
				2^{1-n}\exp\left\{\frac{1}{\pi}\int_{-1}^{1}
				\frac{\log w(x)}{\sqrt{1-x^2}}dx\right\}
				\bigl(1+o(1)\bigr)
			\end{equation}
			as $n\rightarrow \infty$.
			\label{thm:Bernstein}
		\end{theoremm}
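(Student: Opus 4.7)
The plan is to match an upper and a lower bound on $\|wT_n^w\|_{[-1,1]}$, both asymptotic to $2^{1-n}G(w)$ where $G(w)=\exp\bigl(\int_{-1}^1\log w\,d\mu_{[-1,1]}\bigr)$ is the Szegő geometric mean of $w$ against the equilibrium measure $d\mu_{[-1,1]}(x)=dx/(\pi\sqrt{1-x^2})$. The target factors as the Schiefermayr prefactor $2\cdot\Cap([-1,1])^n$ times $G(w)$. A first reduction replaces $w_0$ by continuous, strictly positive approximants: since $w_0$ is Riemann integrable with $1/M\le w_0\le M$, one can sandwich $w_0$ between continuous positive $w_0^\pm$ with $\int|\log w_0^\pm-\log w_0|\,d\mu_{[-1,1]}\to 0$, and monotonicity of the weighted Chebyshev norm in $w$ transfers asymptotics.

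For the upper bound, I would construct a trial polynomial via the Szegő outer function
\[
D(z)=\exp\Bigl(\tfrac{1}{4\pi}\int_0^{2\pi}\tfrac{e^{i\theta}+z}{e^{i\theta}-z}\log w(\cos\theta)\,d\theta\Bigr),
\]
analytic in $\mathbb{D}$ with boundary modulus $|D(e^{i\theta})|^2=w(\cos\theta)$ and with $|D(0)|^2=G(w)$. When $1/D$ extends continuously to $\overline{\mathbb{D}}$ (the case of a weight without singularities), approximate it by a polynomial $Q_k$ of degree $k=o(n)$. The symmetrized Joukowski expression $Q_k\bigl(\tfrac{z+z^{-1}}{2}\bigr)T_{n-k}\bigl(\tfrac{z+z^{-1}}{2}\bigr)$, after being rewritten as a monic polynomial of degree $n$ in $x=\tfrac{z+z^{-1}}{2}$, furnishes a trial polynomial whose weighted supremum norm on $[-1,1]$ (equivalently on $\mathbb{T}$, where $|D|^2=w$) is $2^{1-n}G(w)(1+o(1))$.

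For the lower bound, the weighted alternation characterisation of $T_n^w$ forces all its zeros to lie in $[-1,1]$. Applying Jensen's inequality against $d\mu_{[-1,1]}$ yields
\[
\log\|wT_n^w\|_{[-1,1]}\ge\log G(w)+\int\log|T_n^w|\,d\mu_{[-1,1]}=\log G(w)-n\log 2,
\]
where the last equality uses \eqref{rel} together with the fact that $G_{[-1,1]}\equiv 0$ on the zeros of $T_n^w$. This gives the asymptotic $G(w)\cdot 2^{-n}$, off by precisely the Schiefermayr factor $2$. The missing factor is extracted through the Joukowski substitution: a monic polynomial of degree $n$ on $[-1,1]$ becomes a Laurent polynomial $z^n+z^{-n}+\text{lower}$ on $\mathbb{T}$, and the symmetrization $z\leftrightarrow z^{-1}$ produces exactly the prefactor $2$ when comparing the circle-side Szegő infimum with the interval-side one.

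The main obstacle is the algebraic singular factors $\prod_k|x-b_k|^{\alpha_k}$: these render $1/D$ discontinuous on $\overline{\mathbb{D}}$ and block the polynomial approximation used in the upper bound. My strategy is a multiplicative split $w=w_{\mathrm{smooth}}\cdot w_{\mathrm{sing}}$, with $w_{\mathrm{smooth}}$ handled by the Szegő construction above and $w_{\mathrm{sing}}$ absorbed into explicit Jacobi polynomial asymptotics: the Jacobi polynomials $P_n^{(\alpha,\beta)}$ are the exact $L^2$-orthogonal polynomials for the weight $(1-x)^\alpha(1+x)^\beta$, and their weighted supremum norms are known to satisfy the claimed Bernstein asymptotic (this matches the application made in the main text to $w_l(x)=(x+1)^{l/4}$). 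Endpoint singularities $b_k=\pm 1$ feed directly into the Jacobi framework; interior $|b_k|<1$ are treated by a local analysis near each $b_k$ (split the interval at $b_k$ or apply a local change of variable), and the local estimates are patched into the global bound via a partition of unity. The additivity of $\log w$ under integration against $d\mu_{[-1,1]}$ then ensures contributions from different factors combine multiplicatively in $G(w)$.
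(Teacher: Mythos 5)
Your upper-bound construction (Szeg\H{o} outer function plus polynomial approximation of $1/D^2$ composed with the Joukowski map) is a legitimate alternative to the paper's route, which instead rests on Achieser's closed formula \eqref{eq:achieser_norm} for weights of the form $\prod_k(1-x/a_k)^{-1/2}$ followed by monotone sandwiching. But your lower bound has a genuine gap. The Jensen step gives $\|wT_n^w\|_{[-1,1]}\ge 2^{-n}G(w)$, and the claim that the symmetrization $z\leftrightarrow z^{-1}$ ``produces exactly the prefactor $2$'' does not survive inspection: setting $F(z)=2^nz^nT_n^w\bigl(\tfrac{z+z^{-1}}{2}\bigr)$ one obtains a monic palindromic polynomial of degree $2n$ with $F(0)=1$, and the circle-side Szeg\H{o}--Jensen bound $\tfrac{1}{2\pi}\int\log|F(e^{i\theta})|\,d\theta=\sum_j\log^+|\zeta_j|\ge 0$ returns exactly the same $2^{-n}G(w)$ with no extra factor. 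The factor $2$ is the hard part of the theorem --- note that the pointwise inequality $\|wT_n^w\|\ge 2^{1-n}G(w)$ already fails at $n=0$ for constant $w$, so it cannot come from a single application of Jensen; it encodes the asymptotically cosine-like oscillation of $wT_n^w$ (via $\tfrac{1}{2\pi}\int_0^{2\pi}\log|\cos\theta|\,d\theta=-\log 2$), and establishing that requires asymptotic control of $T_n^w$ itself, which is what you are trying to prove. The paper sidesteps this circularity because Achieser's formula is an exact identity for its special weights, so sandwiching $w_l^k\le w\le w_u^k$ as in \eqref{eq:monotonicity_cheb_weight} delivers matching upper \emph{and} lower bounds simultaneously.

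The treatment of the singular factors is also not yet a proof. A partition of unity is incompatible with the global polynomial extremal problem (a cutoff times a polynomial is not a polynomial, and splitting $[-1,1]$ at $b_k$ changes both the equilibrium measure and the extremal problem), the Jacobi-polynomial input only covers $b_k=\pm1$, and asserting that the weighted sup-norms of Jacobi polynomials ``satisfy the claimed Bernstein asymptotic'' is essentially assuming the theorem for those model weights. The interior singularities are precisely where the paper does real work: for $\alpha=m\in\N$ it perturbs $|x-b|^m$ to the non-vanishing weight $|x-b-i\eps|^m$ and lets $\eps\downarrow 0$ using Lemma \ref{lem:log_integral_equilibrium_measure}; for $\alpha=-m$ it exploits that $T_{n+m}^w$ has a zero of order $m$ at $b$, shifting the degree; and fractional $\alpha\in(0,1)$ is squeezed between these two cases. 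Your outline contains no substitute for these steps.
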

		
		With Bernstein's theorem at hand, we can now prove Theorem \ref{thm:widom-factors}. We isolate the following lemma since it will be used multiple times in the rest of the article.
		\begin{lemma}
			For any $z\in \C$, we have
			\begin{equation}
			\label{int}
				\frac{1}{\pi}\int_{-1}^{1}\frac{\log|x-z|}{\sqrt{1-x^2}}dx = \log\frac{\left|z+\sqrt{z^2-1}\right|}{2},
			\end{equation}
			where $z+\sqrt{z^2-1}$ maps $\C\setminus[-1,1]$ conformally onto $\C\setminus\overline{\D}$. In particular, for $z\in [-1,1]$ the integral is constantly equal to $-\log 2$.
			\label{lem:log_integral_equilibrium_measure}
		\end{lemma}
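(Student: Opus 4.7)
The plan is to recognize the integrand as the logarithmic potential of the equilibrium measure of $[-1,1]$ and then invoke the fundamental relation \eqref{rel}. The measure $d\mu(x) = \frac{dx}{\pi\sqrt{1-x^2}}$ on $[-1,1]$ is precisely $d\mu_{[-1,1]}$, so the left-hand side equals $\int \log|x-z|\,d\mu_{[-1,1]}(x)$. Two standard facts will then finish the computation: first, $\Cap([-1,1])=1/2$, which follows from the formula $\Cap([a,b])=(b-a)/4$ already cited in the introduction; and second, the Green's function for $\overline{\C}\setminus[-1,1]$ with pole at infinity is $G_{[-1,1]}(z) = \log|z+\sqrt{z^2-1}|$, where the branch of the square root is the one making $\varphi(z)=z+\sqrt{z^2-1}$ the conformal map from $\C\setminus[-1,1]$ onto $\C\setminus\overline{\D}$. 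Substituting these into \eqref{rel} yields
\[
 \frac{1}{\pi}\int_{-1}^{1}\frac{\log|x-z|}{\sqrt{1-x^2}}\,dx = \log|z+\sqrt{z^2-1}|-\log 2 = \log\frac{|z+\sqrt{z^2-1}|}{2}.
\]
The boundary case $z\in[-1,1]$ follows by noting that $G_{[-1,1]}$ vanishes quasi-everywhere on $[-1,1]$, which gives $|z+\sqrt{z^2-1}|=1$ and the stated value $-\log 2$; this last equality can also be read off directly from the conformal mapping property.

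If one prefers a self-contained verification rather than quoting the Green's function and capacity, I would substitute $x=\cos\theta$ to rewrite the integral as $\frac{1}{\pi}\int_0^\pi \log|\cos\theta-z|\,d\theta$. The key algebraic step is to factor
\[
 \cos\theta - z = \frac{e^{-i\theta}}{2}\bigl(e^{i\theta}-\zeta\bigr)\bigl(e^{i\theta}-1/\zeta\bigr), \qquad \zeta := z+\sqrt{z^2-1},
\]
using that the roots of $w^2-2zw+1$ are $z\pm\sqrt{z^2-1}$ and multiply to $1$. Taking logarithms, exploiting the fact that the combined integrand is even in $\theta$, and applying Jensen's formula $\frac{1}{2\pi}\int_{-\pi}^{\pi}\log|e^{i\theta}-a|\,d\theta = \log^+|a|$ to each factor recovers the identity. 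For $z\notin[-1,1]$ the branch choice gives $|\zeta|>1$, so only the $\zeta$-term contributes via $\log|\zeta|$, yielding $-\log 2 + \log|\zeta|$; for $z\in[-1,1]$ both factors lie on the unit circle and both contribute $0$, leaving $-\log 2$.

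I do not anticipate a serious obstacle: the result is classical and both routes are short. The only delicate point is tracking the branch of $\sqrt{z^2-1}$ consistently so that the stated conformal mapping identification of $z+\sqrt{z^2-1}$ holds and the case $z\in[-1,1]$ matches the generic formula, but this is handled automatically once one fixes the branch that makes $|z+\sqrt{z^2-1}|\geq 1$ throughout.
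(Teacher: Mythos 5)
Your primary argument is the same as the paper's: identify the left-hand side as the logarithmic potential of $\mu_{[-1,1]}$ and apply \eqref{rel} with $G_{[-1,1]}(z)=\log|z+\sqrt{z^2-1}|$ and $\Cap([-1,1])=1/2$. The one place where the paper is more careful is the case $z\in[-1,1]$: it justifies the pointwise (not merely quasi-everywhere) validity of the identity there by continuity of $G_{[-1,1]}$ together with a monotone-convergence limit from $z+i\epsilon$, whereas your appeal to ``$G$ vanishes q.e.'' on its own only yields the formula at quasi-every point of the interval; this small gap is, however, fully closed by your alternative Jensen's-formula computation, which handles $z\in[-1,1]$ directly.
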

		
		\begin{proof}
			Recall that the Green's function for $\overline{\C}\setminus[-1,1]$ with pole at $\infty$ is given by
			\[
				G_{[-1,1]}(z) = \log\left|z+\sqrt{z^2-1}\right|.
			\]
			Since the integral on the left-hand side of \eqref{int} is nothing but the potential for the equilibrium measure of $[-1, 1]$, we conclude that 
			\[
				\frac{1}{\pi}\int_{-1}^{1}\frac{\log|x-z|}{\sqrt{1-x^2}}dx =G_{[-1,1]}(z)+\log\Cap\bigl([-1,1]\bigr)=\log\frac{\left|z+\sqrt{z^2-1}\right|}{2}
			\] 
for $z\notin [-1,1]$. 
If instead $z \in [-1,1]$, we find by continuity of $G_{[-1,1]}$ on $\C$ and monotone convergence that
            \begin{align*}
                \frac{1}{\pi}\int_{-1}^{1}\frac{\log|x-z|}{\sqrt{1-x^2}}dx & = \lim_{\epsilon\downarrow 0}\frac{1}{\pi}\int_{-1}^{1}\frac{\log|x-(z+i\epsilon)|}{\sqrt{1-x^2}}dx\\& = \lim_{\epsilon\downarrow 0}\log\frac{\left|(z+i\epsilon)+\sqrt{(z+i\epsilon)^2-1}\right|}{2}=\log\frac{\left|z+\sqrt{z^2-1}\right|}{2} = -\log 2.
            \end{align*}
            This completes the proof.
		\end{proof}
		
We are now in position to prove the limiting result for $\E_2$. As we shall see in Section \ref{sec:m_star}, the method of proof actually handles all cases in Theorem \ref{thm:widom-factors}. 

		\begin{proof}[Proof of Theorem \ref{thm:widom-factors} for $m=2$]
			Let $w_l(x) = (x+1)^{{l}/{4}}$ with $l=1,3$. Theorem \ref{thm:Bernstein} implies that
			\[
				\|T_n^{w_l} w_l\|_{[-1,1]} = 2^{1-n}\exp\left\{\frac{l}{4\pi}\int_{-1}^{1}\frac{\log (x+1)}{\sqrt{1-x^2}}dx\right\}\bigl(1+o(1)\bigr)
			\]
			as $n\rightarrow \infty$. Applying Lemma \ref{lem:log_integral_equilibrium_measure} gives us that the the exponential on the right-hand side reduces to
			\[
				\exp\left\{\frac{l}{4}\cdot\log\frac{1}{2}\right\} = 2^{-{l}/{4}}.
			\]
			By \eqref{eq:cheb_on_star_odd_sequence}, we therefore find that
			\[
				\|T_{4n+l}^{\E_2}\|_{\E_2} = 2^{n+{l}/{4}}\|T_n^{w_l} w_l\|_{[-1,1]} = 
				2\bigl(1+o(1)\bigr),
			\]
		        which settles the claim for $\E_2$.
		\end{proof}
		
		
		\subsection{General quadratic preimages}
		We proceed by considering general quadratic preimages of $[-2, 2]$ and show that our method can be applied to this case as well. Of course, such preimages need not be connected and we should not expect the Widom factors to always converge to $2$. 
		As in Theorem \ref{thm:general_quadratic}, we shall consider sets of the form
		\begin{equation}
		\E_P=\bigl\{z: P(z)\in [-2,2]\bigr\}, 
		\label{eq:quadratic-preimage}
		\end{equation}
		where $P(z)=z^2+az+b$ and $a,b\in \bbC$.  Now, let us present a proof of this result.		
		\begin{proof}[Proof of Theorem \ref{thm:general_quadratic}]
			By \eqref{eq:green_capacity_polynomial_preimage}, we have $\Cap(\E_P) = 1$ and thus
			\[
			 \cW_{n,\infty}(\E_P) = \|T_{n}^{\E_P}\|_{\E_P}.
			\] 
			The first part of the theorem (i.e., \eqref{even} for even degree) is easily handled by \eqref{eq:kamo_borodin}. In order to prove \eqref{odd}, we note that
			\[z^2+az+b = \left(z+\frac{a}{2}\right)^2+c, \quad c=b-a^2/4.\]
			Therefore, it suffices to consider the case of
			\[\E_{P_c} = \bigl\{z : z^2+c\in [-2,2]\bigr\}.\]
			As $z\in \E_{P_c}$ 
			if and only if $-z\in\E_{P_c}$, we deduce that
			\[
				T_{2n+1}^{\E_{P_c}}(z)=z^{2n+1}+\sum_{k=0}^{n-1}a_kz^{2k+1}, 
				\quad a_k\in\C. 
			\]
			Hence, since $|\sqrt{z}|=\vert z\vert^{1/2}$ is single-valued, we can apply the change of variables $x = (z^2+c)/2$ leading to
			\[
				\|T_{2n+1}^{\E_{P_c}}\|_{\E_{P_c}} = 
				2^{n+{1}/{2}}\min_{b_0, b_1, \ldots, b_{n-1}\in\bbR}
				\max_{x\in [-1,1]}\sqrt{\vert x-c/2 \vert} \left|x^n
				+\sum_{k=0}^{n-1}b_kx^k\right|.
			\]
			Since the weight function $\sqrt{\vert x- c/2\vert}$ is of the form \eqref{weight} --- in fact, we only need $w_0$ for $c\notin [-2, 2]$ --- Theorem \ref{thm:Bernstein} implies that 
			\begin{equation}
				\|T_{2n+1}^{\E_{P_c}}\|_{\E_{P_c}}= 2^{3/2}\exp\left\{\frac{1}{2\pi}\int_{-1}^{1}\frac{\log |x-c/2|}{\sqrt{1-x^2}}dx\right\}\bigl(1+o(1)\bigr)
				\label{eq:asymptotics_quadratic_preimage}
			\end{equation}
			as $n\rightarrow \infty$. Using Lemma \ref{lem:log_integral_equilibrium_measure} to rewrite the exponential term, 
we conclude that 
			\[
				\lim_{n\to\infty}\|T_{2n+1}^{\E_{P_c}}\|_{\E_{P_c}} = 
				2^{3/2}\cdot \sqrt{\frac{\bigl\vert {c}/{2}+\sqrt{\left({c}/{2}\right)^2-1}\bigr\vert}{2}} = \sqrt{2\left|c+\sqrt{c^2-4}\right|}
			\]
			and the proof is complete.
		\end{proof}

\begin{remark}
As the attentive reader may have noticed, we employ a different change of variables in the proof of Theorem \ref{thm:general_quadratic} compared to what is used for $\E_2$.  This is because we no longer have the same degree of symmetry at our disposal.  In principle, one could have applied the change of variables $x=z^2/2$ for $\E_2$ as well.  However, it seems more appropriate to leverage all the available structure, especially considering the distinct behavior of $\Vert T_{4n+l}^{\E_2}\Vert_{\E_2}$ for $l=1, 3$.
\end{remark}		
		
As pointed out in the introduction, $\E_P$ is only connected when $c\in[-2, 2]$. For otherwise the critical value of $z^2+c$ lies outside $[-2, 2]$. The corresponding sets have the shape of a ``stretched plus'' and we appear to loose the monotonicity of
\[
   n \mapsto \Vert T_{4n+l}^{\E_P} \Vert_{\E_P}, \quad l=1, 3
\]		
whenever $c\neq 0$, see Figures \ref{fig_quadratic_preimage2}--\ref{fig:quadratic_max_norms2}.	

\begin{figure}[h!]
			\centering
			\begin{minipage}{.4\textwidth}
		  		\centering
  				\includegraphics[width=1.2\linewidth]{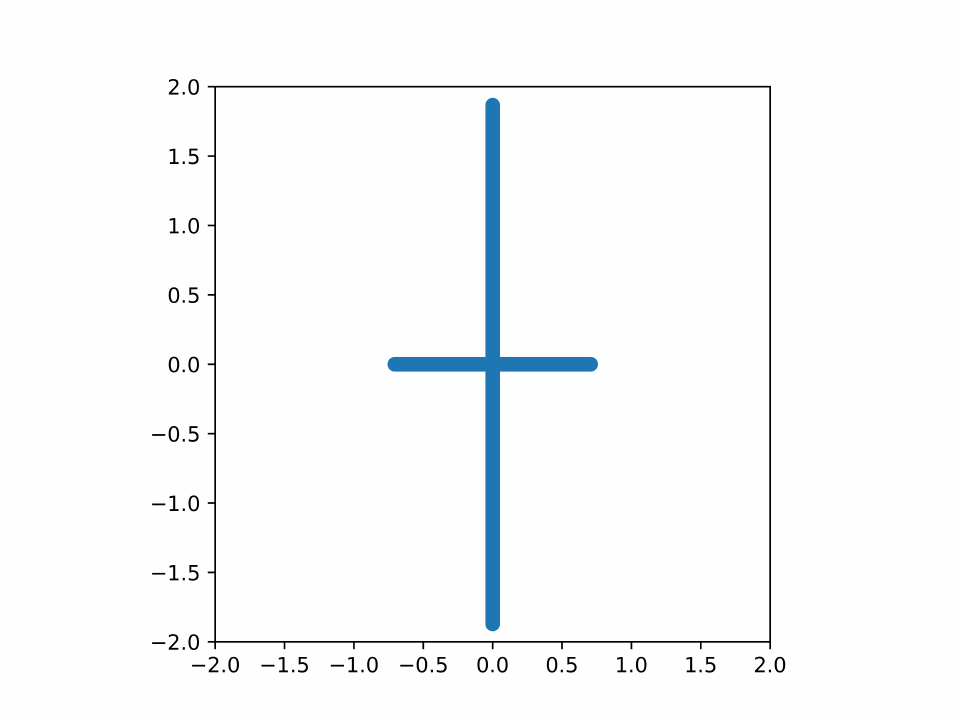}
				\caption{$\E_{P_c}$ for $c=-3/2$}
				\label{fig_quadratic_preimage2}
			\end{minipage}%
			\begin{minipage}{.6\textwidth}
				\centering
				\includegraphics[width=.8\linewidth]{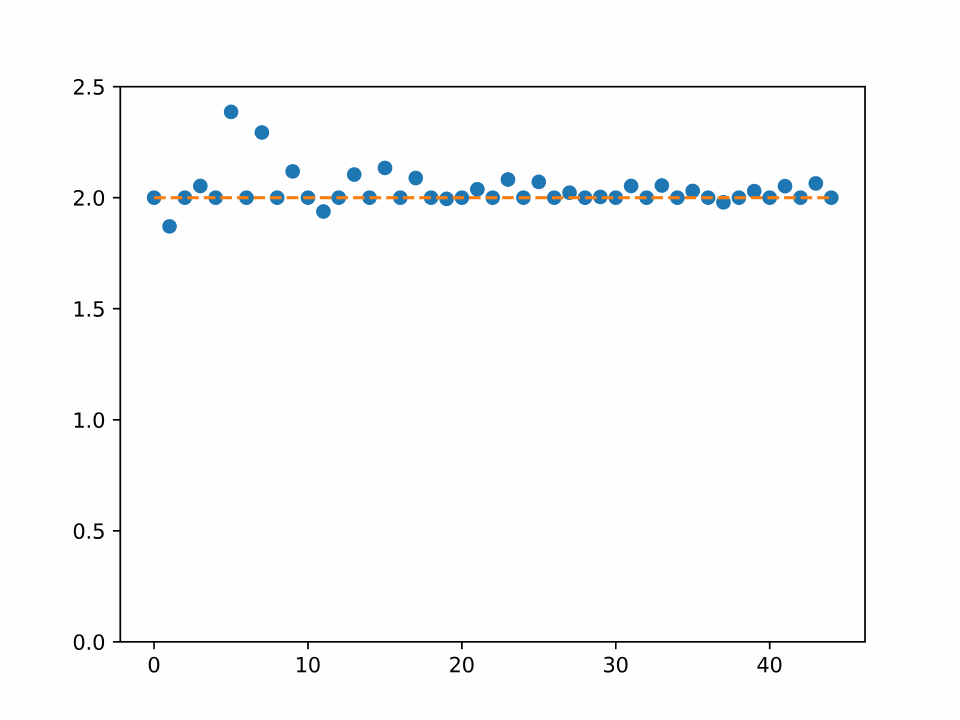}
				\caption{The norms of $T_{n}^{\E_{P}}$ for $c=-3/2$}
				\label{fig:quadratic_max_norms2}
			\end{minipage}
		\end{figure}	

For $c\in\R\setminus[-2, 2]$, the set $\E_P$ is either purely real or purely imaginary and consists of two disjoint intervals of the same length. This case is well-understood and the fact that $n\mapsto \cW_{n, \infty}(\E_P)$ is asymptotically periodic goes back to Achieser \cite{Ach}. 
Remarkably, this pattern 
persists for all non-real values of $c$ where $\E_P$ comprises two disjoint analytic Jordan arcs of opposite sign.
 See Figures \ref{fig_quadratic_preimage3}--\ref{fig:quadratic_max_norms3} for such an example including the corresponding norms.
				
	\begin{figure}[h!]
			\centering
			\begin{minipage}{.4\textwidth}
		  		\centering
  				\includegraphics[width=1.2\linewidth]{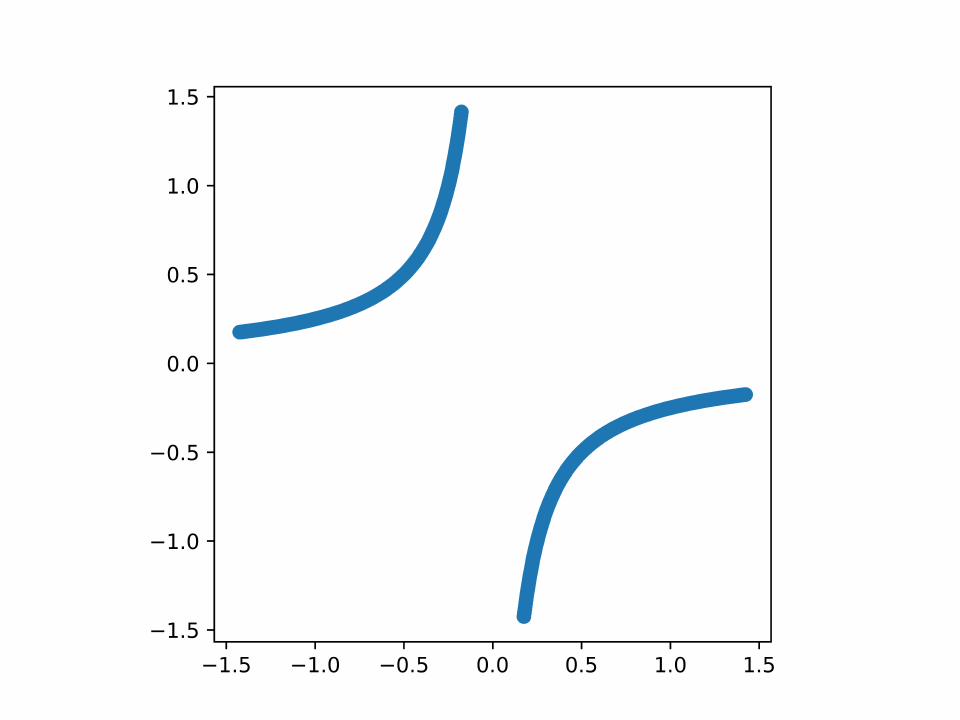}
				\caption{$\E_{P_c}$ for $c=i/2$}
				\label{fig_quadratic_preimage3}
			\end{minipage}%
			\begin{minipage}{.6\textwidth}
				\centering
				\includegraphics[width=.8\linewidth]{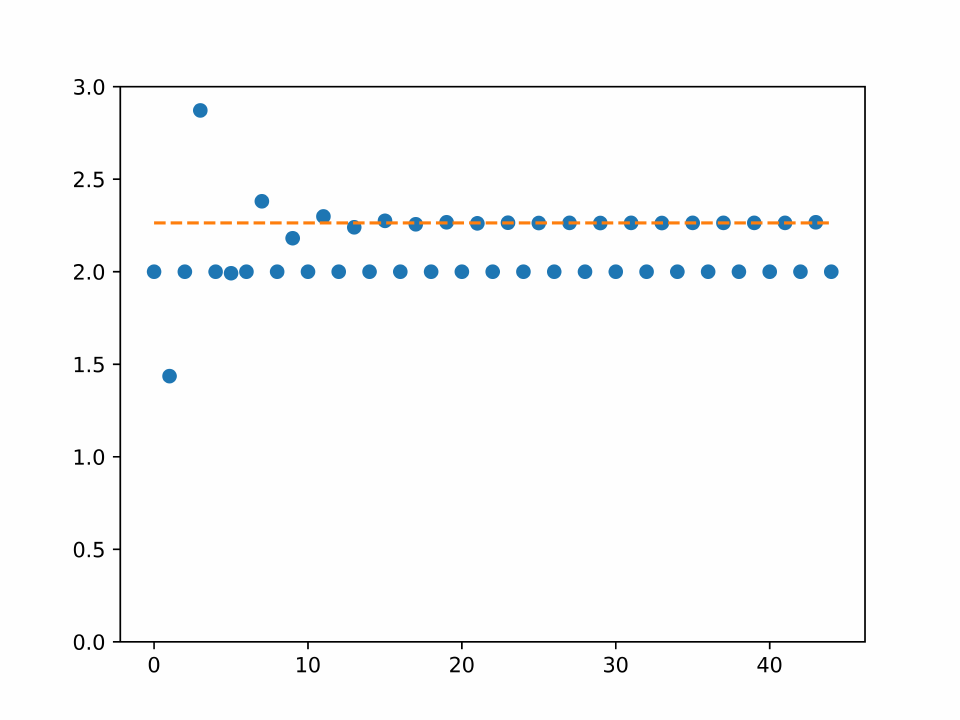}
				\caption{The norms of $T_{n}^{\E_{P}}$ for $c=i/2$}
				\label{fig:quadratic_max_norms3}
			\end{minipage}
		\end{figure}			
	
	\section{General $m$-stars}
	\label{sec:m_star}
	The aim of this section is to generalise the content of Section \ref{sec:quadratic} to the general setting of $m\geq 2$ 
	and in particular prove Theorem \ref{thm:widom-factors}. As the strategy is very similar, 
	we shall not provide all details in the proof. Let us first state the alternation result that still holds in this setting.
	
	\begin{lemma}
	\label{sym}
		For $n,l\in \bbN$ and $0\leq l<2m$, the polynomial $T_{2nm+l}^{\E_m}$  is characterised by the following two properties:
		\begin{itemize}
			\item for $x\in [0,2^{1/m}]$ and $k=0, 1, \ldots, 2m-1$, the polynomial $T_{2nm+l}^{\E_m}(e^{i\pi k/m} x)$ has an alternating set consisting of $n+1$ points, \vspace{0.1cm}	
			\item $T_{2nm+l}^{\E_m}$ is of the form
			\begin{equation} \label{eq:cheb_representation_star}
				T_{2nm+l}^{\E_m}(z) = z^{2nm+l}+\sum_{k=0}^{n-1}a_{2km+l}z^{2km+l},
				\quad a_{2km+l}\in\R.
			\end{equation}
		\end{itemize}
	\end{lemma}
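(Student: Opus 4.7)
The plan is to follow the roadmap of Lemma \ref{lem:alternation_lemma_deg_2}, replacing the four-fold rotational symmetry of $\E_2$ by the $2m$-fold rotational symmetry of $\E_m$ together with complex conjugation.

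First, I would establish the form \eqref{eq:cheb_representation_star}. With $\omega = e^{i\pi/m}$ and $N := 2nm + l$, the invariances $\omega \E_m = \E_m$ and $\overline{\E_m} = \E_m$, combined with uniqueness of the Chebyshev polynomial, force
\[
   T_N^{\E_m}(\omega z) = \omega^N T_N^{\E_m}(z), \qquad \overline{T_N^{\E_m}(\bar z)} = T_N^{\E_m}(z).
\]
Comparing coefficients rules out all powers of $z$ except those with exponent $\equiv l \pmod{2m}$ and forces the coefficients to be real, which is \eqref{eq:cheb_representation_star}. Writing $T_N^{\E_m}(z) = z^l Q(z^{2m})$ with $Q$ monic real of degree $n$, the rotation identity further yields $|T_N^{\E_m}(e^{i\pi k/m} x)| = |T_N^{\E_m}(x)|$, so it suffices to analyse the real polynomial $T_N^{\E_m}$ on $[0, 2^{1/m}]$.

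For the existence of the alternating sets, I would appeal to \cite[Lemma 2.5.3]{Lorentz66}: the extremal set of $T_N^{\E_m}$ on $\E_m$ contains at least $N + 1$ points. Rotational symmetry decomposes this set into full $2m$-orbits off the origin (the origin itself being extremal precisely when $l = 0$), which forces at least $n + 1$ extrema on each ray. To see that the signs of $T_N^{\E_m}$ at consecutive extrema on $[0, 2^{1/m}]$ alternate, I would use the identity
\[
   (T_N^{\E_m})'(x) = x^{l-1}\bigl[\,l\, Q(x^{2m}) + 2m\, x^{2m} Q'(x^{2m})\,\bigr],
\]
whose bracketed factor is a polynomial of degree $n$ in $x^{2m}$ (respectively $Q'(x^{2m})$ of degree $n-1$ in the case $l=0$). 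Thus $(T_N^{\E_m})'$ has at most $n$ (respectively $n-1$) zeros in $(0, 2^{1/m})$; a pair of consecutive extrema of the same sign would produce, via Rolle's theorem, an additional derivative zero strictly between them, which, combined with the derivative zeros at the interior extrema themselves, would exceed this bound.

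Finally, for uniqueness, I would mimic the intermediate-value argument from Lemma \ref{lem:alternation_lemma_deg_2}. Suppose $P$ is monic of degree $N$ satisfying (i) and (ii) but $P \neq T_N^{\E_m}$. Then $\|P\|_{\E_m} > \|T_N^{\E_m}\|_{\E_m}$, so at each of the $n + 1$ alternation points of $P$ on $[0, 2^{1/m}]$ the sign of $P - T_N^{\E_m}$ matches the alternating sign of $P$, producing $n$ zeros of $P-T_N^{\E_m}$ in $(0, 2^{1/m})$. The $2m$-fold rotational symmetry inherited from (ii) propagates these to $2mn$ distinct zeros in $\E_m \setminus \{0\}$, and (ii) forces a further zero of multiplicity at least $l$ at the origin, for a total of at least $2mn + l = N$ zeros. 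But $P - T_N^{\E_m} = z^l (\widetilde{Q} - Q)(z^{2m})$ has degree at most $N - 2m < N$, a contradiction. The most delicate point will be the derivative count in the existence step, where the cases $l = 0$ and $l \geq 1$ need separate treatment (since the origin is an extremum exactly when $l = 0$, and the derivative loses a zero from the $z^{l-1}$ factor when $l = 0$); everything else amounts to bookkeeping around the ideas already used in the $m = 2$ argument.
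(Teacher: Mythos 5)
Your argument is correct and follows essentially the same route as the paper: the rotation and conjugation symmetries of $\E_m$ combined with uniqueness of $T_N^{\E_m}$ give the representation \eqref{eq:cheb_representation_star}, and the alternation/uniqueness part is the natural adaptation of the argument in Lemma \ref{lem:alternation_lemma_deg_2} (which the paper explicitly leaves to the reader). Your per-ray derivative count via the factorisation $(T_N^{\E_m})'(x) = x^{l-1}\bigl[\,l\,Q(x^{2m}) + 2m\,x^{2m}Q'(x^{2m})\,\bigr]$ and the careful $l=0$ versus $l\geq 1$ case split are exactly the details one needs to supply.
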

	\begin{proof}
		We first show that $T_{2nm+l}^{\E_m}$ can be represented as in \eqref{eq:cheb_representation_star}. Since $z\in \E_m$ if and only if $e^{i\pi/m}z\in \E_m$, we find that
		\[
			T_{2nm+l}^{\E_m}(e^{i\pi/m}z) = e^{2\pi i n}e^{i\pi l/m}T_{2nm+l}^{\E_m}(z).
		\]
		Hence, by writing
		\[
			T_{2nm+l}^{\E_m}(z) = z^{2nm+l}+\sum_{k=0}^{2nm+l-1}a_kz^{k},
		\]
		we see that
		\[
			e^{i\pi (k-l)/m}a_k=a_k.
		\]
		For $a_k$ to be non-zero, it is thus necessary that $e^{i\pi (k-l)/m}=1$. In other words, $k=2mj+l$ for some integer $j$.  We also note that $\E_m$ is conjugate symmetric in the sense that $z\in \E_m$ if and only if $\overline{z}\in \E_m$.  This implies that $a_k\in \bbR$ for all $k$.
		
		To prove the alternating property, one argues in the same way as in Lemma \ref{lem:alternation_lemma_deg_2} and we leave the details to the reader.
	\end{proof}	

    There is a certain overlap between the Chebyshev polynomials corresponding to different $\E_m$ sets. Indeed, for $m, k\in \N$ we have
    \[
       \E_{mk} = \bigl\{z:z^{mk}\in [-2,2]\bigr\} = \bigl\{z: z^k\in \E_m\bigr\}
    \]
and we thus gather from \eqref{eq:kamo_borodin} that 
    \begin{equation}
    \label{kn=nk}
       T_{kn}^{\E_{mk}}(z) = T_{n}^{\E_m}(z^k), \quad n\geq 1.
    \end{equation}
This can also be proven directly using Lemma \ref{sym} which we now briefly illustrate. Due to \eqref{eq:cheb_representation_star}, we have
    \[T_{k(2nm+l)}^{\E_{mk}}(z) = (z^k)^lQ(z^k),\]
where $Q$ is a monic polynomial of degree $2nm$. Since $z^k$ maps $\E_{mk}$ onto $\E_m$, we conclude that $Q$ is the minimal monic polynomial for
    \[\min_{b_0,b_1,\dotsc,b_{2nm-1}\in\R}\max_{z\in \E_m}\left|z^l\left(z^{2nm}+\sum_{k=0}^{2nm-1}b_kz^{k}\right)\right|.\]
    But this expression is minimised by $T_{2nm+l}^{\E_m}$ and hence
    \begin{equation}
    \label{kmn}
    T_{k(2nm+l)}^{\E_{mk}}(z) = T_{2nm+l}^{\E_m}(z^k), \quad n\geq 1.
    \end{equation}
As a consequence of \eqref{kmn}, we see that $\Vert T_{k(2nm+l)}^{\E_{mk}}\Vert_{\E_{mk}}=\Vert T_{2nm+l}^{\E_m}\Vert_{\E_m}$ for all $n\geq 1$.  Monotonicity of one of these subsequences therefore implies monotonicity of the other.
    
	We are now in position to prove Theorem \ref{thm:widom-factors} in full generality.
	
	\begin{proof}[Proof of Theorem \ref{thm:widom-factors}]
		We apply the change of variables $x = z^{2m}/2-1$, note that $|z^{1/2m}|$ is single-valued, and obtain 
		\[
			\|T_{2nm+l}^{\E_m}\|_{\E_m} = 2^{n+{l}/{2m}}
			\min_{b_0, b_1, \ldots, b_{n-1}\in\R} \max_{x\in [-1,1]}
			|x+1|^{{l}/{2m}}\left|x^n+\sum_{k=0}^{n-1}b_kx^k\right|.
		\]
		Theorem \ref{thm:Bernstein} gives us that the minimum on the right-hand side behaves like
		\[
			2^{1-n}\exp\left\{\frac{l}{2\pi m}\int_{-1}^{1}\frac{\log|x+1|}{\sqrt{1-x^2}} dx\right\}\bigl(1+o(1)\bigr)
		\]
		as $n\rightarrow \infty$, and Lemma \ref{lem:log_integral_equilibrium_measure} implies that		\[
			\exp\left\{\frac{l}{2\pi m}\int_{-1}^{1}\frac{\log|x+1|}{\sqrt{1-x^2}}dx\right\} = 2^{-{l}/{2m}}.
		\]
		Combining these formulas yields \eqref{limit n}.
		
		To prove \eqref{limit m}, we simply note that the representation in \eqref{eq:cheb_representation_star} implies that
		\[
		 T_{2m-1}^{\E_m}(z)=z^{2m-1}, \quad m\geq 1
		\]
		and hence $\|T_{2m-1}^{\E_m}\|_{\E_m} = 2^{(2m-1)/m} = 2^{2-1/m}$.		 
	\end{proof}

\begin{remark}
	Throughout this exposition, we opted to consider polynomial preimages of $[-2, 2]$, the canonical interval of capacity one. Consequently, $\E_m$ consistently possesses an even number of edges and symmetry in both the real and imaginary axes. Had we chosen the interval $[0, 4]$ as our starting point, our sets would have encompassed all symmetric star graphs, including those with an odd number of edges. It is worth noting that the method of proof extends seamlessly to this scenario, and the equivalent of Theorem \ref{thm:widom-factors} holds for such sets as well. To be specific, if we let
	\[
	   	\Sm_m=\bigl\{ z : z^m\in [0, 4] \bigr\},
	\]
then $\cW_{mn, \infty}(\Sm_m)=2$ for all $n\geq 1$ and $\lim_{n\to\infty}\cW_n(\Sm_m)=2$.
	
\end{remark}
	
As a consequence of \eqref{limit m}, we see that it is possible to construct Widom factors arbitrarily close to $4$. 
Furthermore, since $T_l^{\E_m}(z)=z^l$ for all $0<l<2m$, we have
\[
   \Vert T_l^{\E_m} \Vert_{\E_m}=2^{l/m}, \quad l<2m
\]
and this value is $<2$ for $0<l/m<1$ and $>2$ for $l/m>1$.   
As in the the case of $m=2$, this result is consistent with the numerics presented in Figure \ref{fig:norm_plots_m_3}. Again, the plot suggests monotonicity along the natural subsequences (mod $2m$). 
	
	\begin{figure}[!h]
		\centering
		\includegraphics[width=0.7\textwidth]{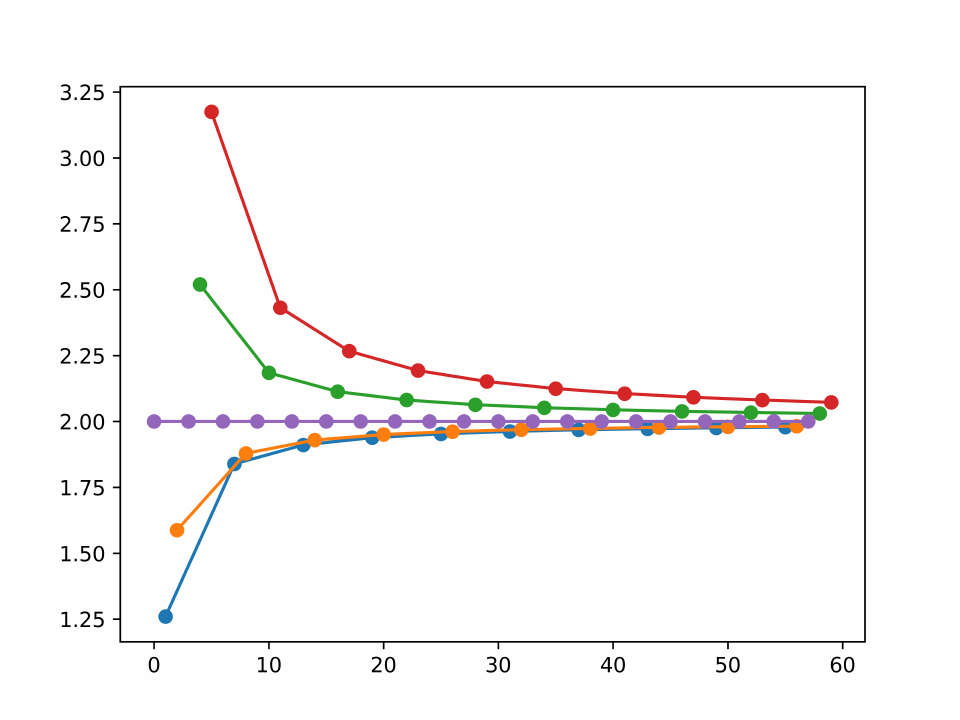}
		\caption{The norms of $\|T_{6n+l}^{\E_3}\|_{\E_3}$ for $0\leq n\leq 4$ and $0\leq l \leq 5$.}
		\label{fig:norm_plots_m_3}
	\end{figure}

	In the next section we shall prove that if $P_{n}^{\mu_{\E_m}}$ denotes the monic orthogonal polynomial with respect to equilibrium measure $\mu_{\E_m}$, then
	\begin{equation}  \label{L2}
	   n\mapsto \|P_{2nm+l}^{\mu_{\E_m}}\|^2_{L^2(\mu_{\E_m})}
	\end{equation}
	is monotonic for any fixed $l$. In fact, this sequence increases when $l/m <1$ and decreases for $l/m>1$. This matches the numerically suggested behavior for the sup-norms of the Chebyshev polynomials. 
	

	\section{Related orthogonal polynomials}
	\label{sec:orthogonal}
	We now turn our attention to the orthogonal polynomials with respect to the equilibrium measure of $\E_m$. Recall that if
$\mu$ is a probability measure supported on the outer boundary of $\E_m$, then $\mu=\mu_{\E_m}$ 
precisely when \eqref{rel} holds.
	With this result at hand, we can explicitly determine $\mu_{\E_m}$
	(see also \cite{PeherstorferSteinbauer2001}). 
	
	\begin{lemma}  \label{equi}
		Let $\E_m$ be defined as in \eqref{Em}. The equilibrium measure of $\E_m$ is absolutely continuous with respect to arc length measure and given by
		\begin{equation}
			d\mu_{\E_m}(z) = \frac{|z^{m-1}|}{\pi \sqrt{4-z^{2m}}} |dz|,
			\quad z\in\E_m.
			\label{eq:equilibrium_measure}
		\end{equation}
	\end{lemma}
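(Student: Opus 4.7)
The plan is to verify directly that the right-hand side of \eqref{eq:equilibrium_measure} defines a probability measure $\nu$ on $\E_m$ satisfying the characterising identity \eqref{rel} for the equilibrium measure; uniqueness will then force $\nu = \mu_{\E_m}$. Applying \eqref{eq:green_capacity_polynomial_preimage} to $P(z)=z^m$ and $\E=[-2,2]$ gives $\Cap(\E_m)=1$ and $G_{\E_m}(w)=\tfrac{1}{m}G_{[-2,2]}(w^m)$, so the target reduces to showing
\[
  \int_{\E_m}\log|w-z|\,d\nu(z) = \frac{1}{m}G_{[-2,2]}(w^m).
\]

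For normalisation I parametrise the $2m$ rays of $\E_m$ by $z = r e^{i\pi k/m}$, $r\in[0,2^{1/m}]$ and $k=0,\ldots,2m-1$. On such a ray, $|dz|=dr$, $|z^{m-1}|=r^{m-1}$ and $z^{2m}=r^{2m}$, so the substitution $u=r^m$ turns the mass of a single ray into $\frac{1}{m\pi}\int_0^2\frac{du}{\sqrt{4-u^2}}=\frac{1}{2m}$; summing over all $2m$ rays gives $\nu(\E_m)=1$. The very same substitution identifies the pushforward $(z\mapsto z^m)_\#\nu$ as $\frac{du}{\pi\sqrt{4-u^2}}=d\mu_{[-2,2]}(u)$: the $m$ rays with $k$ even cover $[0,2]$, each contributing density $\frac{1}{m\pi\sqrt{4-u^2}}$, and the $m$ odd-index rays cover $[-2,0]$ analogously.

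The core step is the potential computation. The density and $\E_m$ are both invariant under $z\mapsto\zeta z$ with $\zeta = e^{2\pi i/m}$, so $\nu$ is $\zeta$-rotation invariant and
\[
  \int_{\E_m}\log|w-\zeta^k z|\,d\nu(z) = \int_{\E_m}\log|w-z|\,d\nu(z)
\]
for every $k$ and every $w\in\C$. Summing over $k=0,\ldots,m-1$ and invoking the factorisation $w^m - z^m = \prod_{k=0}^{m-1}(w-\zeta^k z)$ gives
\[
  m\int_{\E_m}\log|w-z|\,d\nu(z) = \int_{\E_m}\log|w^m-z^m|\,d\nu(z) = \int_{-2}^{2}\log|w^m - u|\,d\mu_{[-2,2]}(u),
\]
where the last equality is the pushforward identified above. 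By \eqref{rel} applied to $[-2,2]$ this right-hand side equals $G_{[-2,2]}(w^m)$, and dividing by $m$ completes the verification. The only real obstacle is the bookkeeping in the pushforward step; once the rotation symmetry and the product expansion of $w^m - z^m$ are in place, the reduction to the classical arcsine measure on $[-2,2]$ is immediate.
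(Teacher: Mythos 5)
Your proof is correct and follows essentially the same route as the paper: both verify that the candidate density is a probability measure and that its potential satisfies \eqref{rel} by pushing forward to the arcsine measure on $[-2,2]$ under $z\mapsto z^m$ and using $G_{\E_m}=\tfrac1m G_{[-2,2]}(\,\cdot^m)$ with $\Cap(\E_m)=\Cap([-2,2])=1$. Your use of rotation invariance together with the factorisation $w^m-z^m=\prod_{k}(w-\zeta^k z)$ is just a tidier bookkeeping of the paper's identity $\log|\zeta-z^m|=\sum\log|\zeta^{1/m}-z|$ summed over the $m$th roots.
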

	\begin{proof}
		A straightforward computation shows that the measure defined in \eqref{eq:equilibrium_measure} is a probability measure on $\E_m$. 
		We proceed to show that it satisfies \eqref{rel}.
		Note that if $\zeta, z\in \bbC$ then
		\[\log| \zeta-z^m | = \sum \log| \zeta^{1/m}-z |, \]
		where the sum is taken over all the $m$th roots of $\zeta$. Using this, we can compute the potential for the measure in question and arrive at 
		\begin{align*}
		\frac{1}{\pi}\int_{\E_m}\log| x-z | \frac{\vert x^{m-1}\vert}{\sqrt{4-x^{2m}}} \vert dx\vert 
		&=\frac{1}{\pi}\sum_{k=1}^{2m}\int_{0}^{2^{1/m}}
		\log|e^{i\pi k/m}x-z |\frac{x^{m-1} }{\sqrt{4-x^{2m}}}dx \\
		&= \frac{1}{m\pi}\sum_{k=1}^{m}\int_{-2}^{2}
		\frac{\log| e^{i\pi k/m}t^{1/m}-z |}{\sqrt{4-t^2}}dt \\
		& = \frac{1}{m\pi}\int_{-2}^{2}\frac{\log|t- z^m|}{\sqrt{4-t^2}}dt
		= \frac{1}{m}G_{[-2,2]}(z^m).
		\end{align*}
		The last equality follows from the fact that $dt/(\pi\sqrt{4-t^2})$ is the equilibrium measure of $[-2,2]$ which is a set of logarithmic capacity one.
		By Lemma \ref{pre}, we have
		\[
			\frac{1}{m}G_{[-2,2]}(z^m)=G_{\E_m}(z)
		\]
		and consequently \eqref{eq:equilibrium_measure} holds true since also $\Cap( \E_m )=1$.
	\end{proof}
As in Section \ref{OP}, we shall use the notation $P_{n}^{\mu_{\E_m}}$ for the monic orthogonal polynomials with respect to $\mu_{\E_m}$. Recall that these polynomials minimise the integrals
	\[
		\int_{\E_m}\left|P_{n}\right|^2d\mu_{\E_m}
	\]
	among all monic polynomials of degree $n$. Just as in Lemma \ref{sym}, one can use the symmetries of $\E_m$ to show that for $n, l \in\N$ and $0\leq l<2m$, we have
		\begin{equation}
		\label{Pn mu}
		   P_{2nm+l}^{\mu_{\E_m}}(z) = z^{2nm+l}+\sum_{k=0}^{n-1}a_{2km+l}z^{2km+l},
		   \quad a_{2km+l}\in \bbR.
		\end{equation}  
We are now ready to prove Theorem \ref{thm:widom_factors_orthogonal}.
	
	\begin{proof}[Proof of Theorem \ref{thm:widom_factors_orthogonal}]
Fix $m\geq 2$, let $n\in\N$ and suppose $0\leq l<2m$. By Lemma \ref{equi} and with $P_{2nm+l}^{\mu_{\E_m}}$ as in \eqref{Pn mu}, we have
	\begin{align}  \label{int P}
	\notag
	\|P_{2nm+l}^{\mu_{\E_m}}\|_{L^2(\mu_{\E_m})}^2 & = 		
	\frac{1}{\pi}\int_{\E_m}\left|z^{2nm+l}+\sum_{k=0}^{n-1}a_{2km+l}z^{2km+l}\right|^2 
	\frac{\vert z^{m-1}\vert}{\sqrt{4-z^{2m}}}\vert dz \vert \\
	\notag
	&=\frac{2m}{\pi}\int_{[0, {2^{1/m}}]} 
	\left|z^{2nm}+\sum_{k=0}^{n-1}a_{2km+l}z^{2km}\right|^2  
	\frac{\vert z^{m+2l-1}\vert}{\sqrt{4-z^{2m}}} \vert dz\vert \\
	&=\frac{1}{\pi}\int_{0}^{4}\left(x^n+\sum_{k=0}^{n-1}a_{2km+l}x^k\right)^2
	\frac{x^{l/m-1/2}}{\sqrt{4-x}} dx.
	\end{align}
	Realising that the above integral is minimised precisely for the Jacobi polynomials (suitably rescaled to $[0, 4]$), we can compute it explicitly. Following Szeg\H{o} \cite[Chapter IV]{Sz}, we adopt the notation
	\[
	   P_n^{(\alpha, \beta)}(x)= {n+\alpha \choose n}
	   {}_2F_1\left( -n, n+\alpha+\beta+1; \alpha+1; \frac{1-x}{2} \right)
	\]   
and note that the leading coefficient of $P_n^{(\alpha, \beta)}$ is given by 
        \[
           2^{-n}{2n+\alpha+\beta \choose n}=
           \frac{\Gamma(2n+\alpha+\beta+1)}{2^n \Gamma(n+1)\Gamma(n+\alpha+\beta+1)}.
        \]        	
The integral in \eqref{int P} can thus be written as
	\begin{multline}
	\label{Gs}
	\quad \frac{2^{4n+l/m} \Gamma(n+1)^2 \Gamma(n+l/m)^2}{\pi \Gamma(2n+l/m)^2}
	\int_{-1}^{1}\left( P_n^{(-1/2, l/m-1/2)}(x)\right)^2(1-x)^{-{1}/{2}}(1+x)^{l/m-1/2}dt \\
	= \frac{2^{4n+2l/m} \Gamma(n+{1}/{2}) \Gamma(n+1) \Gamma(n+{l}/{m})
	\Gamma(n+{l}/{m}+1/2)} {\pi (2n+{l}/{m}) \Gamma(2n+l/m)^2} \quad
	\end{multline}
since 
\[
   \int_{-1}^1 \left( P_n^{(\alpha, \beta)}(x) \right)^2 (1-x)^\alpha (1+x)^\beta dx=
   \frac{2^{\alpha+\beta+1} \Gamma(n+\alpha+1) \Gamma(n+\beta+1)}
   {(2n+\alpha+\beta+1) \Gamma(n+1) \Gamma(n+\alpha+\beta+1)}.
\]      	
Applying Legendre's duplication formula, the expression in \eqref{Gs} reduces to
\[
   \frac{2 \Gamma(2n+1) \Gamma(2n+2l/m)}{(2n+l/m) \Gamma(2n+l/m)^2}
\]
and recalling that $\Gamma(x+a)/\Gamma(x+b)\sim x^{a-b}$ as $n\to\infty$, we deduce that
\[
   \|P_{2nm+l}^{\mu_{\E_m}}\|_{L^2(\mu_{\E_m})}^2 \to 2 \; \mbox{ as } \; n\to\infty.
\]   
   

In order to prove the claimed monotonicity along subsequences, we introduce the quantity
	\[
		\gamma_n(s) := \frac{2 \Gamma(2n+1) \Gamma(2n+2s)}
		{(2n+s) \Gamma(2n+s)^2}, \quad s\geq 0.
	\]
Note that $\|P_{2nm+l}^{\mu_{\E_m}}\|_{L^2(\mu_{\E_m})}^2 = \gamma_{n}({l}/{m})$ and	\begin{align*}
		\frac{\gamma_{n+1}(s)}{\gamma_{n}(s)} &= 
	\frac{\Gamma(2n+3) \Gamma(2n+2s+2) (2n+s) \Gamma(2n+s)^2}  
	{\Gamma(2n+1) \Gamma(2n+2s) (2n+s+2) \Gamma(2n+s+2)^2}\\
		&=\frac{(2n+1) (2n+2) (2n+2s) (2n+2s+1)}
		{(2n+s) (2n+s+1)^2 (2n+s+2)}.
	\end{align*}
Since
\[
   \frac{(x+1)(x+2s)}{(x+s)(x+s+1)}=\frac{x^2+2xs+x+2s}{x^2+2xs+x+s^2+s}	
\]   
and
\[
   \frac{2s}{s^2+s}=\frac{2}{s+1}\gtrless 1 \; \mbox{ for } \; s\lessgtr 1,
\]
it follows that $\gamma_{n+1}(s)/\gamma_n(s)\gtrless 1$ for $s\lessgtr 1$. In this way, we have obtained the desired monotonicity statement.   
	\end{proof}
	
	The sets $\E_m$ appear to be the first examples beyond the framework of single Jordan arcs for which \eqref{W=W} is fulfilled. We wonder how far one can push the conditions on $\E$ in the conjecture of \cite{CSZ-review}.

		\section{A further investigation regarding $\cW_{2n+1}(\E_2)$}  \label{first}
	The plot in Figure \ref{fig:norm_plot} indicates that $\Vert T_{2n+1}^{\E_2}\Vert_{\E_2}\lessgtr 2$ depending on whether $n$ is even or odd.  As proven in Section \ref{sec:quadratic}, $T_{4n+l}^{\E_2}$ has a zero of order $l$ at $z=0$ (for $l=1, 3$).  However, there is no apparent explanation for why a zero with greater multiplicity would necessarily result in a higher sup-norm.  In this section we shall study the preimages defined in \eqref{En} in more detail and try to shed more light on the pattern revealed by Figure \ref{fig:norm_plot}.
		
		
		As explained in Section \ref{sec:quadratic}, $\E_{2}^{n,l}$ consists of the base set $\E_2$ together with certain ``decorations''.  At all the points where $T_{4n+l}^{\E_2}$ has a zero, a new ``crossing'' will appear and each of the four preimages 
		\begin{equation}
		(T_{4n+l}^{\E_2})^{-1}\Bigl( i^k \bigl[0, \|T_{4n+l}^{\E_2}\|_{\E_2}\bigr]\Bigr), \quad k=0, 1, 2, 3
		\label{preim}
		\end{equation}
consists of $4n+l$ arcs. While $2n+1$ of these arcs lie on $\E_2$, the remaining $2n+l-1$ are orthogonal to $\E_2$ (except for the $l-1$ arcs emanating at the origin).  It can be shown that every arc in $\E_2^{n, l}$ has harmonic measure equal to ${1}/{4(4n+l)}$, for instance, by examining a conformal map from $\C\setminus \E_2^{n,l}$ onto $\C\setminus \overline{\D}$. The typical shape of an $\E_2^{n, l}$ set is illustrated in Figures \ref{fig-E21}--\ref{fig-E23} below. 
		\begin{figure}[h!]
			\centering
			\begin{minipage}{.5\textwidth}
		  		\centering
  				\includegraphics[width=.9\linewidth]{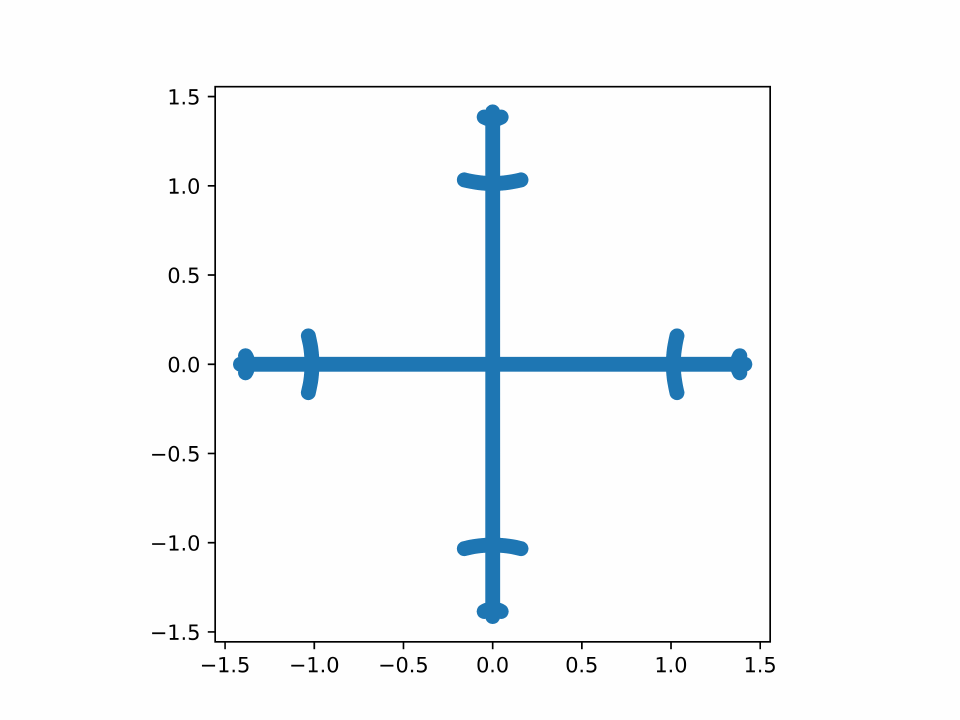}
				\caption{$\E_2^{2,1}$}
				\label{fig-E21}
			\end{minipage}%
			\begin{minipage}{.5\textwidth}
				\centering
				\includegraphics[width=.9\linewidth]{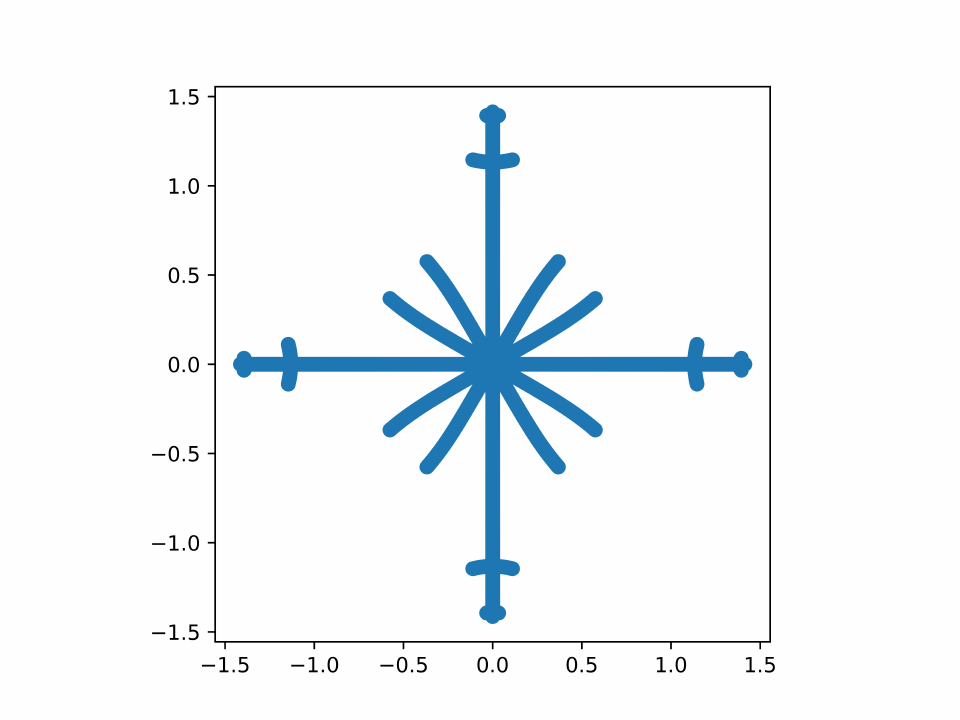}
				\caption{$\E_{2}^{2,3}$}
				\label{fig-E23}
			\end{minipage}
		\end{figure}	

With \eqref{eq:capacity_cheb_norm} in mind, we see that understanding the behaviour of $\Vert T_{4n+l}^{\E_2} \Vert_{\E_2}$ equates to effectively estimating the capacity of $\E_2^{n, l}$.  Since the zeros of $T_{4n+l}^{\E_2}$ are hard to determine explicitly, 
it seems difficult to estimate this capacity directly.  We get a better picture by considering the conformal map
		\[
			\Phi(z) = \sqrt{{z^2}/{2}+\sqrt{\left({z^2}/{2}\right)^2-1}\,}	
		\]
which maps $\C\setminus\E_2$ onto $\C\setminus\overline{\D}$.  When applying $\Phi$, the complement of $\E_2^{n, l}$ is mapped onto the complement of the closed unit disk accompanied by certain ``protrusions'', see Figures \ref{fig-E21-image}--\ref{fig-E23-image}. 		
		\begin{figure}[h!]
		\begin{minipage}{.5\textwidth}
		  		\centering
  				\includegraphics[width=.9\linewidth]{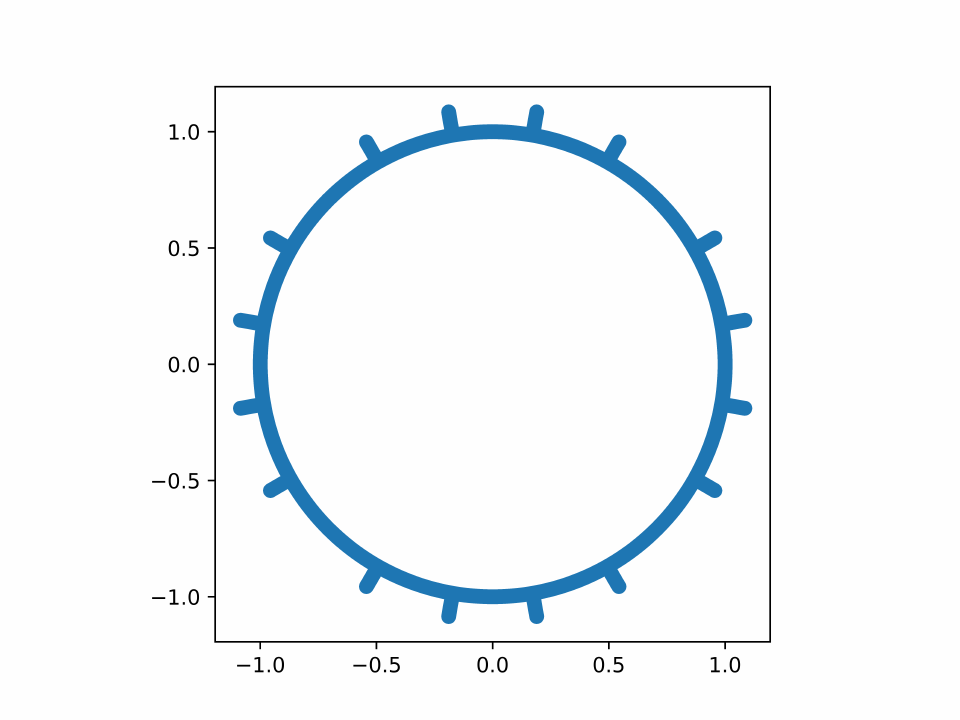}
				\caption{$\Phi(\E_2^{2,1})$}
				\label{fig-E21-image}
			\end{minipage}%
			\begin{minipage}{.5\textwidth}
				\centering
				\includegraphics[width=.9\linewidth]{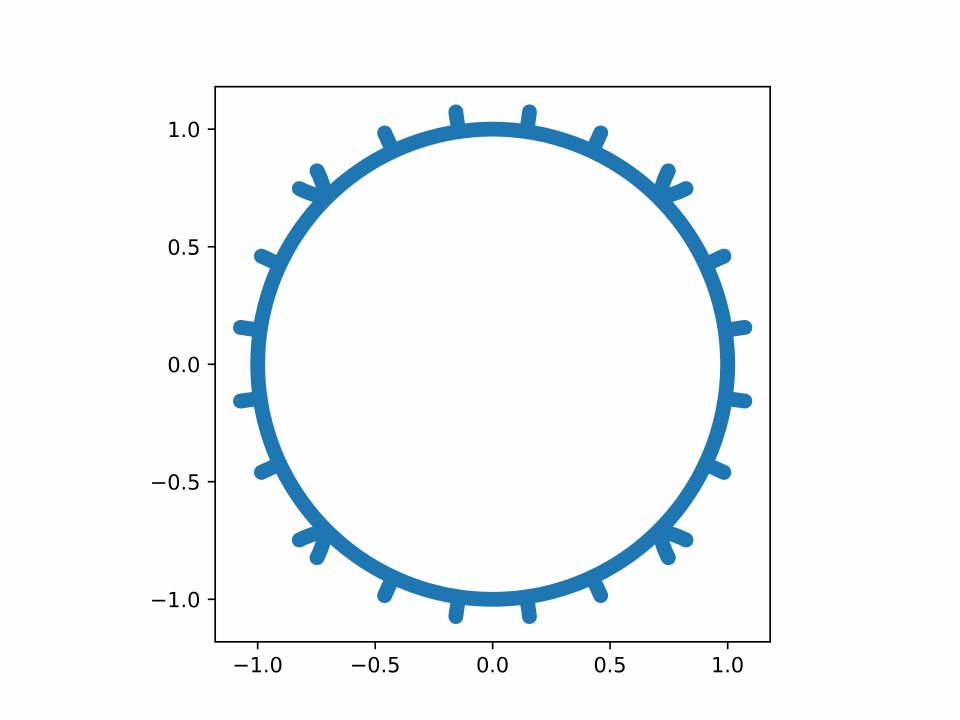}
				\caption{$\Phi(\E_2^{2,3})$}
				\label{fig-E23-image}
			\end{minipage}
		\end{figure}	
By construction, all these $4(2n+l-1)$ protrusions have the same equilibrium measure relative to $\Phi(\E_2^{n, l})$.  Hence their combined mass relative to equilibrium measure amounts to
		\begin{equation}  \label{12}
			\frac{2n+l-1}{4n+l}.
		\end{equation}	
As we shall argue below, the fact that this quantity is $<1/2$ for $l=1$ but $>1/2$ for $l=3$ seems to explain why $\Vert T_{2n+1}^{\E_2}\Vert_{\E_2}\lessgtr 2$ for $n$ even and odd, respectively.	
			
Consider the sets
		\begin{equation}
		\label{domain}
			S(n,l):=\partial\bbD \, \cup\bigcup_{k=1}^{4(2n+l-1)}
			\exp\Bigl\{\frac{\pi ik/2}{2n+l-1}\Bigr\}
			 \,[1,R_{n, l}],
		\end{equation}
where $R_{n, l}>1$ is a suitable constant depending on $n$ and $l$.  These sets have a straightforward structure, allowing us to compute their capacity, and they also bear a resemblance to the configuration of $\Phi(\E_2^{n, l})$.  Although it may seem that way at first glance, the protrusions in $\Phi(\E_2^{n,l})$ are not perfectly straight line segments, and their distribution on $\partial\D$ is not entirely uniform.  One can show that by selecting $R_{n,l}$ in such a way that
		\begin{equation}
			\mu_{S(n, l)}(\partial\D) = \frac{2n+1}{4n+l},
		\end{equation}	 
the inequality
			$\Cap ( S(n, l) )^{4n+l} \lessgtr \sqrt{2}$
holds, depending on whether $l=1$ or $3$.  For example, by performing a straightforward calculation, we find that
	        \[
	           \Cap\bigl(S(n,1)\bigr) = {\Bigl(\cos\Bigl(\frac{\pi n}{4n+1}\Bigr)\Bigr)}^{-1/4n}
	        \]
and from this expression, one can infer that $\Cap (S(n,1) )^{4n+1} \to \sqrt{2}$ in a monotonically increasing manner.  Taking \eqref{eq:capacity_cheb_norm} into account, we now possess additional evidence indicating that 
	   $\Vert T_{4n+1}^{\E_2}\Vert_{\E_2} \to 2$
from below.  Similar reasoning provides support for the conjecture that $\Vert T_{4n+3}^{\E_2}\Vert_{\E_2} \to 2$ from above.

\section{Outlook and perspective}
\label{sec:outlook}

A key finding of this paper has been to show that, beyond intervals, there exist compact subsets of $\C$ for which the Widom factors converge to $2$. Specifically, the sets $\E_m$ as defined in Theorem \ref{thm:widom-factors} and illustrated in Figures \ref{fig-E2}--\ref{fig-E11} exhibit this property when $m\geq 2$, and the same holds for any other symmetric star graph with an odd number of edges.
It is reasonable to inquire whether or not there are more sets $\E\subset\C$ for which $\cW_{n,\infty}(\E)\to 2$.  As explained below, we believe the answer is indeed affirmative.	

Recall that if $P$ is a complex polynomial and $P'(z)=0$, then $z$ is called a {critical point} and $w=P(z)$ a {critical value} for $P$.  Polynomials with at most two critical values, say $c_1$ and $c_2$, are known as \emph{Shabat polynomials} (or generalised Chebyshev polynomials), see \cite{BZ,SZ}.  Since this class of polynomials is invariant under non-degenerate linear transformations, we may assume that $c_1=-2$ and $c_2=2$.
It is a known fact (see, e.g., \cite{BZ} or \cite{JP}) that for Shabat polynomials, the preimage $P^{-1}([-2, 2])$ is a planar tree with as many edges as the degree of $P$.  In fact, one can establish a bijection between the set of bicolored planer trees and the set of equivalence classes of Shabat polynomials.  Moreover, with respect to the point at $\infty$, every edge of $P^{-1}([-2, 2])$ has equal harmonic measure and any subset of an edge has equal harmonic measure from both sides.  Trees with these properties are also called ``balanced trees'' and they satisfy the $S$-property of Definition \ref{def:s-property}.  A remarkable result of Bishop \cite{Bishop} shows that balanced trees are dense in all planar continua (i.e., compact connected subsets of $\C$) with respect to the Hausdorff metric.

For $n\geq 2$, the polynomial $P_n(z)=z^n$ has one critical point (namely, $z=0$) and one critical value $w=0$. So the preimage $P_n^{-1}([0,4])$ is a balanced tree and when $n=2m$, this tree coincides with $\E_m$ as defined in \eqref{Em}.  Another example of a Shabat polynomial (with critical values $w=0, 1$) is
\[
   P(z)=\frac{8}{729} (z+1) \Bigl(z^2-\frac{3}{2}z+\frac{9}{2}\Bigr)^3 
\] 
and the corresponding tree is pictured in Figure \ref{fig:shabat_tree_4}.  By use of the algorithm mentioned in Section \ref{sec:quadratic}, one can compute the norms of $T_n^{\E_P}$ for degree up to $40$ with a precision of $10^{-2}$.  The outcome is illustrated in Figure \ref{fig:shabat_norms_4}.

		\begin{figure}[h!]
			\centering
			\begin{minipage}{.5\textwidth}
		  		\centering
  				\includegraphics[width=0.8\linewidth]{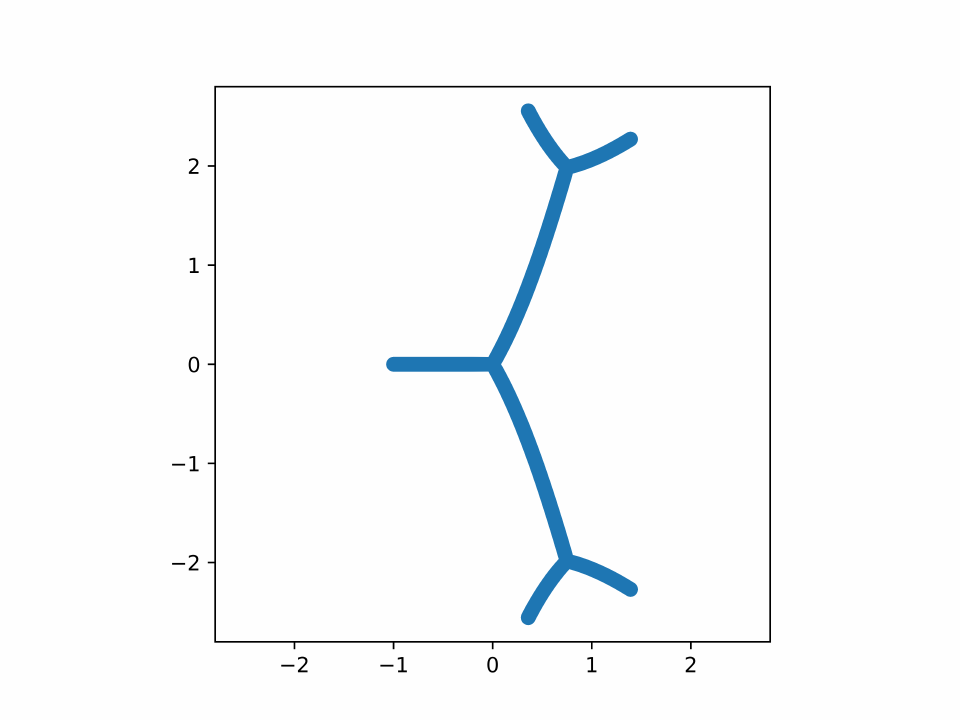}
				\caption{$\E_P=P^{-1}([0,1])$}
				\label{fig:shabat_tree_4}
			\end{minipage}%
			\begin{minipage}{.5\textwidth}
				\centering
				\includegraphics[width=.8\linewidth]{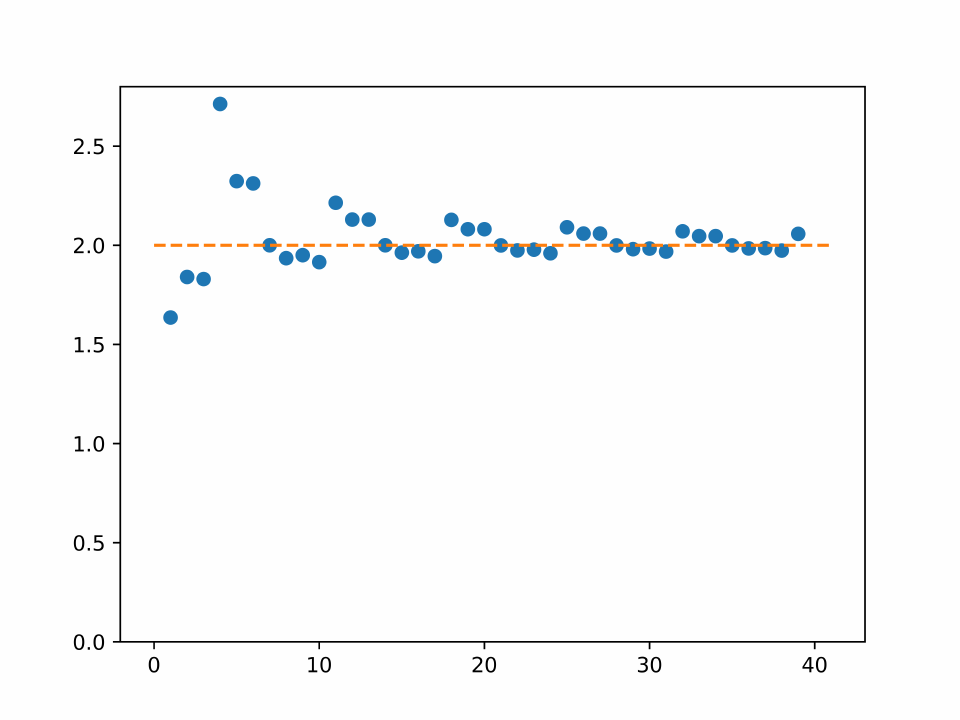}
				\caption{Widom factors for ${\E_P}$}
				\label{fig:shabat_norms_4}
			\end{minipage}
		\end{figure}


Similar plots for other Shabat polynomials lead to the same pattern and we thus have the courage to hypothesize that the first part of Theorem \ref{thm:widom-factors} applies to all balanced trees.

%
		
\begin{conjecture}
\label{conj}
Let $P$ be a monic Shabat polynomial with critical values in $\{-2, 2\}$ and consider the preimage $\E_P:=P^{-1}([-2, 2])$. Then
\[
   \lim_{n\to\infty} \cW_{n, \infty} (\E_P)=2.
\]
\end{conjecture}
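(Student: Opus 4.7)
The plan is to extend the Bernstein-based argument of Section \ref{sec:m_star} from the symmetric stars $\E_m$ to the balanced tree $\E_P$ associated with an arbitrary monic Shabat polynomial $P$ of degree $m$. The easy subsequence is immediate: by \eqref{eq:kamo_borodin} we have $T_{mn}^{\E_P}(z) = T_n^{[-2, 2]}(P(z))$ and, since $\Cap(\E_P) = 1$, this gives $\cW_{mn, \infty}(\E_P) = 2$ exactly for every $n \geq 1$. The conjecture thus reduces to proving
\[
\lim_{n \to \infty} \|T_{mn + l}^{\E_P}\|_{\E_P} = 2 \quad \text{for each fixed } l \in \{1, \ldots, m - 1\}.
\]

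For the upper bound, I would use trial polynomials of the form $R(z) T_n^{[-2, 2]}(P(z))$ with $R$ monic of degree $l$ to be optimised. The product is monic of degree $mn + l$, so
\[
\|T_{mn + l}^{\E_P}\|_{\E_P} \leq \min_R \max_{z \in \E_P} |R(z)| \, |T_n^{[-2, 2]}(P(z))|.
\]
On $\E_P$ the weight $|T_n^{[-2, 2]}(P(z))|$ oscillates between $0$ and $2$, attaining $2$ precisely on the preimages of the $n + 1$ Chebyshev extrema in $[-2, 2]$. While the crude choice $R = T_l^{\E_P}$ yields only the loose bound $2 \|T_l^{\E_P}\|_{\E_P}$, the refined strategy is to choose $R = R_n$ so that its zeros lie at well-selected preimages of these Chebyshev extrema. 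The balanced-tree property of $\E_P$ --- each of the $m$ edges carries equal harmonic measure $1/m$ from infinity --- together with $P_\ast \mu_{\E_P} = m \cdot \mu_{[-2, 2]}$ forces these extremal preimages to equidistribute on $\E_P$ with respect to $\mu_{\E_P}$ as $n \to \infty$. A Bernstein-type convergence of weighted extremal polynomials should then show that the optimal weighted minimax asymptotes to $2 \cdot \Cap(\E_P)^l = 2$ via \eqref{rel}, giving $\limsup_n \|T_{mn + l}^{\E_P}\|_{\E_P} \leq 2$.

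For the matching lower bound, Schiefermayr's inequality \eqref{eq:schiefermayr_inequality} is unavailable since $\E_P$ is generally not real. Instead, the plan is to mimic Section \ref{first}: consider the preimage $\E_P^{n, l} := (T_{mn + l}^{\E_P})^{-1}(T_{mn + l}^{\E_P}(\E_P))$ and argue that the image $T_{mn + l}^{\E_P}(\E_P)$ is asymptotically interval-like --- paralleling the fact that $T_n^{[-2, 2]}(P(\E_P)) = [-2, 2]$ exactly for the trial polynomial. Once the image is shown to have capacity asymptotically comparable to $\|T_{mn + l}^{\E_P}\|_{\E_P} / 2$, the identity \eqref{eq:green_capacity_polynomial_preimage} combined with the monotonicity $\Cap(\E_P^{n, l}) \geq \Cap(\E_P) = 1$ would yield $\liminf_n \|T_{mn + l}^{\E_P}\|_{\E_P} \geq 2$.

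The main obstacle is the absence of the rotational symmetry that drove the proof of Theorem \ref{thm:widom-factors} for $\E_m$. There, invariance forced the exact factorisation $T_{2nm + l}^{\E_m}(z) = z^l Q(z^{2m})$, collapsing the problem to a weighted Chebyshev problem on $[-1, 1]$ with the explicit weight $(x + 1)^{l / (2m)}$, to which Theorem \ref{thm:Bernstein} applies verbatim. No such factorisation is available for a general Shabat polynomial, so the trial-polynomial upper bound must be matched by quantitative error control between $R_n \cdot T_n^{[-2, 2]}(P)$ and the true Chebyshev polynomial, and the image analysis in the lower bound requires a geometric understanding of why $T_{mn + l}^{\E_P}(\E_P)$ becomes interval-like. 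The balanced-tree property is the structural substitute for rotational symmetry, and turning this qualitative feature into matching sharp asymptotics is precisely the genuinely open content of the conjecture.
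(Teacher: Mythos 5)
This statement is Conjecture \ref{conj}: the paper does not prove it. The authors explicitly state that their method (the symmetry-driven reduction to a Bernstein weighted problem on $[-1,1]$) ``does not exhibit a clear path for generalisation'' to Shabat preimages, and they offer only numerical evidence. So the benchmark here is not a proof in the paper but an open problem, and your proposal should be judged as an attempted resolution. As written, it is a strategy outline with two genuine gaps, both of which you partially acknowledge but neither of which you close.

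First, the upper bound. Writing the trial polynomial as $R_n(z)\,T_n^{[-2,2]}(P(z))$ with $R_n$ monic of degree $l$ requires $|R_n(z)|\leq 1+o(1)$ on the set where $|T_n^{[-2,2]}(P(z))|$ is near $2$. Since $\Cap(\E_P)=1$, Szeg\H{o}'s bound gives $\|R_n\|_{\E_P}\geq 1$, so you are asking for a monic polynomial that essentially saturates the capacity bound on a prescribed, $n$-dependent subset of a tree --- and there is no analogue of Theorem \ref{thm:Bernstein} off the real line to deliver this. The paper's proof for $\E_m$ does not construct such an $R_n$ by equidistribution; it uses the exact factorisation $T_{2nm+l}^{\E_m}(z)=z^lQ(z^{2m})$ forced by rotational symmetry to transplant the \emph{entire} minimisation onto $[-1,1]$ with weight $(x+1)^{l/2m}$, where Bernstein's theorem applies. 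Without that factorisation your ``Bernstein-type convergence'' is precisely the missing theorem, not a consequence of known ones. (Also, the pushforward identity should read $P_*\mu_{\E_P}=\mu_{[-2,2]}$; both are probability measures.) Second, the lower bound. The inequality $\cW_{n,\infty}(\E)\geq 2$ is a theorem only for $\E\subset\R$; for general compact sets one only has $\cW_{n,\infty}(\E)\geq 1$, and the circular-arc example \eqref{eq:thiran_detaille} shows the $\liminf$ can genuinely drop below $2$ for connected one-dimensional sets. Your assertion that $T_{mn+l}^{\E_P}(\E_P)$ is ``asymptotically interval-like'' is exactly the point where the balanced-tree structure must enter, and you give no mechanism for it; the analogous step in Section \ref{first} of the paper is itself only heuristic (the sets $S(n,l)$ are explicitly described as resembling, not equalling, $\Phi(\E_2^{n,l})$). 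In short, your plan identifies the right obstacles but resolves neither, which is consistent with the statement remaining a conjecture.
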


Since any balanced tree satisfies the $S$-property, one may speculate if this conjecture can be extended to a larger class of sets.  In fact, if the property of every edge having equal harmonic measure turns out to not play a role, it would be natural to formulate the conjecture for so-called generalised Shabat polynomials.  This class was introduced in \cite{JP} and allows for more than two critical values as long as they all belong to $[-2, 2]$.  The preimage $P^{-1}([-2, 2])$ is connected (in fact, a tree) if and only if $[-2, 2]$ contains all the critical values of $P$.  The ideal scenario would be that the conjecture holds true for all connected sets characterised by the $S$-property.

Regrettably, our current method of proof does not exhibit a clear path for generalisation within the context of Shabat polynomials and the $S$-property.  Novel strategies and ideas are required to advance our understanding in this domain.
	
	\appendix
    \section{Bernstein's method}
	
    In this appendix we shall discuss a proof of Theorem \ref{thm:Bernstein}.  Our approach will be a simplification of the method employed by Bernstein in \cite{Bernstein:1930-31}.  The initial stage of \cite{Bernstein:1930-31} involved showing that if $w$ is a polynomial which is strictly positive on $[-1, 1]$, then the monic orthogonal polynomials $P_n^{w}$ with respect to the weight function
    \[
       {w(x)}/{\sqrt{1-x^2}}
    \]
are well-suited trial polynomials for minimisation of the quantity
    \[
        \max_{x\in [-1,1]}\left|\sqrt{w(x)}\left(x^n+\sum_{k=0}^{n-1}c_kx^k\right)\right|.
    \]
As in Section \ref{sec:quadratic}, we shall denote these minimisers by $T_n^{\sqrt{w}}$.  More specifically, Bernstein proved that $\sqrt{w}P_n^{w}$ is asymptotically alternating for this class of polynomial weights and an explicit analysis of $P_n^w$ can then be used to unveil the asymptotic behaviour of $\|\sqrt{w}T_n^{\sqrt{w}}\|_{[-1,1]}$ as stated in Theorem \ref{thm:Bernstein}.
    
    Rather than using orthogonal polynomials as trial polynomials, we shall base our analysis on an explicit expression for certain weighted Chebyshev polynomials due to Achieser \cite[Appendix A]{Ach_2}.  This formula is presented in Section \ref{A1}.  Since the class of weights includes the reciprocal of any strictly positive polynomial on $[-1, 1]$, we can --- in line with Bernstein --- apply a standard approximation argument to conclude that Theorem \ref{thm:Bernstein} holds true for all Riemann integrable weights on $[-1, 1]$ which are bounded above and below by positive constants.  This is the second step in the proof and we outline the details in Section \ref{A2}.    
The final and most complicated step is to allow for zeros of the weight function on $[-1, 1]$. We shall present exactly the same refined argument as provided by Bernstein \cite{Bernstein:1930-31} in Section \ref{A3}.   
    
    \subsection{An explicit formula for certain weighted Chebyshev polynomials}
    \label{A1}
    Denote by $\overline{\R}:=\R\cup\{\infty\}$ the extended real line.
    Let $a_k\in\overline{\R}\setminus [-1,1]$ and form the product
       \[
          w(x) = \prod_{k=1}^{2m}\left(1-\frac{x}{a_k}\right)^{-1/2},
       \]
    which represents a positive weight function on $[-1,1]$. The case of an odd number of factors can be handled by taking $a_{2m} = \infty$ and $\vert a_k \vert<\infty$ for $k=1, 2, \ldots, 2m-1$. 
Let $z\in\partial\D$ and $\rho_k\in\D$ be defined implicitly by
    \[
       x = \frac{1}{2}\left(z+\frac{1}{z}\right) \; \mbox{ and } \; 
       a_k  = \frac{1}{2}\left(\rho_k+\frac{1}{\rho_k}\right) \; \mbox{ for } \; k=1, 2, \ldots, 2m.
    \]
The following explicit representation for $wT_n^{w}$ is given by Achieser \cite[Appendix A]{Ach_2}.
    \begin{theorem}
        \label{thm:akhiezer}
        For $n>m$, we have
        \begin{equation}
        \label{wT}
           w(x)T_n^{w}(x) = 2^{-n}\prod_{k=1}^{2m}\sqrt{1+\rho_k^2}
           \left(z^{m-n} \prod_{k=1}^{2m}\sqrt{\frac{1-z\rho_k}{z-\rho_k}}+
           z^{n-m}\prod_{k=1}^{2m}\sqrt{\frac{z-\rho_k}{1-z\rho_k}}\,\right)
        \end{equation}
        and
        \begin{equation}
            \|wT_n^{w}\|_{[-1,1]} = 2^{1-n}\exp\left\{\frac{1}{\pi}\int_{-1}^{1}\frac{\log w(x)}{\sqrt{1-x^2}}dx\right\}.
            \label{eq:achieser_norm}
        \end{equation}
    \end{theorem}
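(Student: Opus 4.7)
The plan is to verify formula \eqref{wT} by construction. Let $R(x)$ denote the right-hand side of \eqref{wT} divided by $w(x)$. I would show (i) $R$ is a monic polynomial of degree $n$ in $x$, (ii) $|w(x) R(x)| \leq 2^{1-n}\prod_k\sqrt{1+\rho_k^2}$ for $x \in [-1,1]$, and (iii) this bound is attained at $n+1$ alternating points. By the standard alternation characterisation of the monic minimiser $T_n^w$, these three facts force $R = T_n^w$, which proves \eqref{wT} and identifies the sup-norm as $2^{1-n}\prod_k\sqrt{1+\rho_k^2}$. The final step is to rewrite this product via Lemma \ref{lem:log_integral_equilibrium_measure} to match \eqref{eq:achieser_norm}.

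For (i), the key computation is a direct identity from $x = (z+z^{-1})/2$ and $a_k = (\rho_k + 1/\rho_k)/2$, namely
\[
   1 - \frac{x}{a_k} = \frac{(z-\rho_k)(1-z\rho_k)}{z(1+\rho_k^2)},
\]
which allows us to clear all square roots in the definition of the bracketed factor in \eqref{wT} after multiplying by $w(x)$. A short calculation reduces $R(x)$ to the Laurent expression
\[
    2^{-n}\bigl[ z^{-n}\textstyle\prod_k(1-z\rho_k) + z^{n-2m}\prod_k(z-\rho_k) \bigr],
\]
which is manifestly invariant under $z \leftrightarrow 1/z$ and therefore a polynomial in $x$. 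A direct inspection of the top power of $z$ yields a monic polynomial of degree $n$ after passing to $x = (z+z^{-1})/2$.

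For (ii) and (iii), on $\partial\D$ we have $\bar{z} = 1/z$ and hence $|1 - z\rho_k| = |z-\rho_k|$, so each factor $\sqrt{(z-\rho_k)/(1-z\rho_k)}$ has unit modulus. Consequently the bracketed expression in \eqref{wT} equals $B(z) + \overline{B(z)} = 2\Re B(z)$, where $B(z) := z^{n-m}\prod_k\sqrt{(z-\rho_k)/(1-z\rho_k)}$ satisfies $|B| = 1$, proving (ii). For alternation, I would compute the winding number of the rational function $B^2$: as $z$ traverses $\partial\D$, the factor $z^{2(n-m)}$ contributes $2(n-m)$, each $\rho_k \in \D$ contributes $+1$ from the numerator, and each $1/\rho_k \notin \D$ contributes $0$ from the denominator, totalling $2n$. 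Thus $\arg B$ changes by $2\pi n$, so $B = \pm 1$ at exactly $2n$ points on $\partial\D$. By the conjugation symmetry $B(\bar{z}) = \overline{B(z)}$, and after verifying that the endpoints $z = \pm 1$ lie among these points, these $2n$ values descend to $n+1$ alternating extrema of $wR$ on $[-1,1]$.

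Finally, for \eqref{eq:achieser_norm} I would apply Lemma \ref{lem:log_integral_equilibrium_measure} to each term in $\log w(x) = -\frac{1}{2}\sum_k\log|1-x/a_k|$: using $|a_k + \sqrt{a_k^2-1}| = 1/|\rho_k|$ together with $|a_k| = (1+\rho_k^2)/(2|\rho_k|)$, the pieces combine to give $\frac{1}{\pi}\int_{-1}^1 \log w(x)/\sqrt{1-x^2}\,dx = \frac{1}{2}\sum_k\log(1+\rho_k^2)$, which upon exponentiation yields $\prod_k\sqrt{1+\rho_k^2}$. The main obstacle will be justifying the manipulations with the square roots in \eqref{wT}, which a priori define a multivalued function on $\D$; this is resolved by the observation that the rationalisation in step (i) produces a genuine Laurent polynomial, bypassing the branch ambiguity entirely, while on $\partial\D$ the residual sign ambiguity is absorbed into the parity of the alternation.
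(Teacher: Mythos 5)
Your proposal is correct. Note first that the paper does not actually prove Theorem \ref{thm:akhiezer}: it quotes the formula from Achieser's book and only remarks, without proof, that the right-hand side of \eqref{wT} divided by $w$ is a polynomial satisfying certain orthogonality relations. Your argument therefore supplies a proof where the paper supplies a citation, and it follows the classical equioscillation route rather than the orthogonality route hinted at in the paper's remark. The key computations all check out: the identity $1-x/a_k=(z-\rho_k)(1-z\rho_k)/\bigl(z(1+\rho_k^2)\bigr)$ is correct; the rationalised expression $2^{-n}\bigl[z^{-n}\prod_k(1-z\rho_k)+z^{n-2m}\prod_k(z-\rho_k)\bigr]$ is invariant under $z\mapsto 1/z$ and, using $n>m$ so that the first term never reaches degree $z^{n}$, defines a monic polynomial of degree $n$ in $x$; since the $a_k$ are real, the $\rho_k$ are real, so on $\partial\D$ the two summands of the bracket in \eqref{wT} are genuine unimodular complex conjugates, yielding the bound $2^{1-n}\prod_k\sqrt{1+\rho_k^2}$; the winding count $2(n-m)+2m=2n$ together with $B(\pm1)^2=1$ produces $n+1$ alternation points in $[-1,1]$; and the evaluation of $\tfrac1\pi\int_{-1}^1\log w(x)/\sqrt{1-x^2}\,dx$ via Lemma \ref{lem:log_integral_equilibrium_measure}, using $|a_k+\sqrt{a_k^2-1}|=1/|\rho_k|$ and $2|\rho_k a_k|=1+\rho_k^2$, indeed gives $\log\prod_k\sqrt{1+\rho_k^2}$, reconciling \eqref{wT} with \eqref{eq:achieser_norm}. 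Two small points should be made explicit in a full write-up: the claim that $B=\pm1$ at \emph{exactly} $2n$ points uses that the argument of a finite Blaschke product times $z^{2(n-m)}$ is strictly increasing on $\partial\D$ (equivalently, $B^2=1$ is a polynomial equation of exact degree $2n$ all of whose roots lie on the circle); and the conclusion $R=T_n^w$ from equioscillation requires the de la Vall\'ee Poussin--type comparison argument for the \emph{weighted} minimisation problem, which applies verbatim because $w$ is continuous and strictly positive on $[-1,1]$. Both are routine, and you have correctly flagged the only genuinely delicate issue, the branch ambiguity of the square roots, which your rationalisation in step (i) removes.
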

\begin{remark}
One can show that the expression on the right-hand side of \eqref{wT} indeed becomes a polynomial (in $x$) after division by $w(x)$.  Moreover, this polynomial satisfies the orthogonality conditions
\[
   \int_{-1}^1 T_n^w(x) x^k \frac{w(x)^2}{\sqrt{1-x^2}} dx = 0, 
   \quad k=0, 1, \ldots, n-1
\] 
when $n\geq m$. 
\end{remark}

By letting $a_{2k}=a_{2k-1}$ for $k=1, 2, \ldots, m$, we see that Theorem \ref{thm:akhiezer} in particular applies to weights of the form
    \[
       w(x) = \prod_{k=1}^{m}\left(1-\frac{x}{a_k}\right)^{-1}.
    \]
This was in fact already proven by Chebyshev \cite{Chebyshev1859}.

    \subsection{Norm asymptotics for more general weighted Chebyshev polynomials}
    \label{A2}
    
	We now consider generalising \eqref{eq:achieser_norm} to the case where $w$ is merely a Riemann integrable function on $[-1,1]$ satisfying
	\begin{equation}
	    \label{eq:weight_bounds}
	    \frac{1}{M} \leq w(x) \leq M       
	\end{equation}
for some $M\geq 1$.  In this more general setting, we no longer have equality. But the two expressions in \eqref{eq:achieser_norm} are still asymptotically equivalent as $n\to\infty$.

   The idea is to approximate $w$ by reciprocals of polynomials.  Set $\omega = 1/w$
and choose sequences $\{\omega_u^k\}_k$ and $\{\omega_l^k\}_k$ of polynomials with real zeros away from $[-1,1]$ such that
    \[
       \frac{1}{2M}\leq \omega_u^k(x)\leq \omega(x)\leq \omega_l^k(x)\leq 2M
    \]
and 
    \[
		\lim_{k\rightarrow \infty} \omega_u^{k}(x) = \omega(x) = \lim_{k\rightarrow \infty}\omega_l^{k}(x)
	\]
for every $x\in [-1,1]$.  Defining $w_l^k = 1/\omega_l^k$ and $w_u^k = 1/\omega_u^k$, we obtain that 
    \[
       w_l^k(x)\leq w(x)\leq w_u^k(x)
    \] 
while maintaining the pointwise convergence
    \[
       \lim_{k\rightarrow \infty} w_l^{k}(x) = w(x) = \lim_{k\rightarrow \infty}w_u^{k}(x).
    \]
This in turn implies that
	\begin{equation}  \label{eq:monotonicity_cheb_weight}
	    \|T_n^{w_l}w_l\|_{[-1,1]}\leq \|T_n^{w}w_l\|_{[-1,1]}
	    \leq \|T_n^{w}w\|_{[-1,1]}\leq \|T_n^{w_u}w\|_{[-1,1]}
	    \leq \|T_n^{w_u}w_u\|_{[-1,1]}.    
	\end{equation}
Furthermore, using dominated convergence we get that
	\[
		\lim_{k\rightarrow \infty}\int_{-1}^{1}\frac{\log w_l^k(x)}{\sqrt{1-x^2}}dx = \int_{-1}^{1}\frac{\log w(x)}{\sqrt{1-x^2}}dx = \lim_{k\rightarrow \infty}\int_{-1}^{1}\frac{\log w_u^k(x)}{\sqrt{1-x^2}}dx
	\]
and for any given $\epsilon>0$, we can therefore choose $k$ large enough that 
	\begin{align*}\left|\exp\left\{\frac{1}{\pi}\int_{-1}^{1}\frac{\log w_i^k(x)}{\sqrt{1-x^2}}dx\right\} - \exp\left\{\frac{1}{\pi}\int_{-1}^{1}\frac{\log w(x)}{\sqrt{1-x^2}}dx\right\}\right|&<\epsilon,
	\quad i=l, u.
	\end{align*}
For $n$ sufficiently large, Theorem \ref{thm:akhiezer} implies that 
	\begin{align*}
		\|T_n^{w_i^k}w_i^k\|_{[-1,1]}=
		2^{1-n}\exp\left\{\frac{1}{\pi}\int_{-1}^{1}\frac{\log w_i^k(x)}{\sqrt{1-x^2}}dx\right\},
		\quad i = l,u.
	\end{align*}
By combining this with \eqref{eq:monotonicity_cheb_weight}, we thus get the inequalities
	\begin{align*}
		\exp\left\{\frac{1}{\pi}\int_{-1}^{1}\frac{\log w(x)}{\sqrt{1-x^2}}dx\right\}-\epsilon\leq 2^{n-1}\|T_n^ww\|_{[-1,1]}\leq \exp\left\{\frac{1}{\pi}\int_{-1}^{1}\frac{\log w(x)}{\sqrt{1-x^2}}dx\right\}+\epsilon.
	\end{align*}
As $\epsilon>0$ is arbitrary, we conclude that
	\begin{equation}
	\|T_n^w w\|_{[-1,1]} = 
	2^{1-n}\exp\left\{\frac{1}{\pi}\int_{-1}^{1}\frac{\log w(x)}{\sqrt{1-x^2}}dx\right\}
	\bigl(1+o(1)\bigr)
	\label{eq:non_zero_weighted_cheb_asymptotics}
	\end{equation}
as $n\rightarrow \infty$.  This proves Theorem \ref{thm:Bernstein} when $w$ is Riemann integrable and satisfies \eqref{eq:weight_bounds}.
	
	\subsection{Allowing for zeros of the weight function}
	\label{A3}
	
	The final step in proving Theorem \ref{thm:Bernstein} is to allow for zeros at certain points of $[-1,1]$.  We will adopt the approach outlined in \cite{Bernstein:1930-31} and restrict our examination to the introduction of a single zero $b_1 = b$ within the interval $[-1, 1]$ using the weight $|x-b|^{\alpha}$.  That is, we consider weights of the form
	\begin{equation}  \label{w}
	   w(x)=w_0(x) \vert x-b \vert^\a,
	\end{equation}	
where $w_0$ is Riemann integrable and satisfies $1/M \leq w_0(x) \leq M$ for some constant $M\geq 1$.  More zeros can be added by repeated use of the argument we are about to explain.  

   Assume first that $\alpha=m$ is a positive integer. For $\eps>0$, consider the weight function
		\[
			w^\eps(x) = w_0(x)|x-b-i\eps|^m.
		\]
		This weight is non-vanishing on $[-1,1]$ and fulfils all the previous requirements for \eqref{eq:non_zero_weighted_cheb_asymptotics} to be valid.  By use of Lemma \ref{lem:log_integral_equilibrium_measure}, we hence find that
	\begin{align}
         \|T_n^{w^\eps} w^\eps\|_{[-1, 1]} \notag
	& = 2^{1-n} \exp\left\{\frac{1}{\pi}\int_{-1}^{1}\frac{\log w_0(x)}{\sqrt{1-x^2}}dx\right\}
	 \exp\left\{\frac{1}{\pi}\int_{-1}^{1}\frac{\log |x-b-i\eps|^{m}}{\sqrt{1-x^2}}dx\right\}
	 \bigl(1+o(1)\bigr) \\  
	& = \|T_n^{w_0}w_0\|_{[-1,1]}
	\Biggl(\frac{\left|(b+i\eps)+\sqrt{(b+i\eps)^2-1}\right|}{2}\Biggr)^m
	\bigl(1+o(1)\bigr)   
            \label{eq:asymptotics_upper}
		\end{align}
as $n\to\infty$.  Note that the second factor in \eqref{eq:asymptotics_upper}
        converges to $2^{-m}$ as $\eps\rightarrow 0$.  Since $w^\eps>w$ on $[-1,1]$, we deduce as in \eqref{eq:monotonicity_cheb_weight} that 
		\begin{align}
            \begin{split}
            \|T_n^{w^\eps} w^\eps\|_{[-1,1]}& \geq \|T_n^{w^\eps}w\|_{[-1,1]}
            \geq \|T_n^ww\|_{[-1,1]}\\&\geq \|T_{n+m}^{w_0}w_0\|_{[-1,1]} 
            = 2^{-m}\|T_{n}^{w_0}w_0\|_{[-1,1]}\bigl(1+o(1)\bigr).    
            \end{split}
   \label{eq:upper_lower_bound}
		\end{align}
The combination of \eqref{eq:asymptotics_upper} and \eqref{eq:upper_lower_bound} now implies that
	\[
		\|T_n^ww\|_{[-1,1]} = 2^{-m}\|T_n^{w_0}w_0\|_{[-1,1]}\bigl(1+o(1)\bigr) 
		= 2^{1-n}\exp\left\{\frac{1}{\pi}\int_{-1}^{1}\frac{\log w(x)}{\sqrt{1-x^2}}dx\right\}
		   \bigl(1+o(1)\bigr)
		\]
as $n\to\infty$, and this proves that \eqref{eq:non_zero_weighted_cheb_asymptotics} is valid for weights of the form \eqref{w} with $\a=m\in\N$. 

We next consider the case of negative integer powers.  If $\a=-m$ is a negative integer, then $T_{n+m}^{w}$ has a zero of order $m$ at $x = b$ and hence
		\[
			T_{n+m}^{w}(x)w(x) = T_n^{w_0}(x)w_0(x)\frac{(x-b)^m}{|x-b|^m}.
		\]
Therefore,
		\begin{align*}
		  \|T_{n+m}^{w}w\|_{[-1,1]} &= 
		  2^{1-n}\exp\left\{\frac{1}{\pi}\int_{-1}^{1}\frac{\log w_0(x)}{\sqrt{1-x^2}}dx\right\}
		  \bigl(1+o(1)\bigr) \\
		  & = 2^{1-n-m}\exp\left\{\frac{1}{\pi}\int_{-1}^{1}\frac{\log w(x)}{\sqrt{1-x^2}}dx\right\}
		  \bigl(1+o(1)\bigr)
		\end{align*}
as $n\to\infty$, and this proves the validity of \eqref{eq:non_zero_weighted_cheb_asymptotics} for such weights as well. 
		
		To handle arbitrary exponents $\alpha\in \bbR$, it suffices to examine the case of $\alpha\in (0,1)$. The weight function $w_0$ can namely incorporate zeros with integer exponents on $[-1,1]$ as we have already addressed the asymptotics in this particular case. Let $\eps>0$ and form the weight functions
		\begin{align*}
			w_l^{\eps}(x)&=\begin{cases}
				w(x),& |x-b|\geq\eps \\
				w_0(x)|x-b|,& |x-b|<\eps,
			\end{cases}  \\
			w_u^{\eps}(x)&=\begin{cases}
				w(x),& |x-b|\geq\eps \\
				w_0(x),& |x-b|<\eps.
			\end{cases}
		\end{align*}
Note that \eqref{eq:non_zero_weighted_cheb_asymptotics} applies to both $w_l^{\eps}$ and $w_u^{\eps}$. Regardless of the value of $0<\eps<1$, we have the inequalities
		\[
			w_l^{\eps}(x)\leq w(x)\leq w_u^{\eps}(x)
		\]
and hence 
		\begin{equation*}
		   \|T_n^{w_l^{\eps}}w_l^{\eps}\|_{[-1,1]}
		   \leq \|T_n^{w}w\|_{[-1,1]}
		   \leq \|T_n^{w_u^{\eps}}w_u^{\eps}\|_{[-1,1]}.\label{eq:weighted_chebyshev_inequalities}
		   \end{equation*}	
Consequently,
	 \begin{multline*}
	    \qquad 2^{1-n}\exp\left\{\frac{1}{\pi}\int_{-1}^{1}
	    \frac{\log w_l^{\eps}(x)}{\sqrt{1-x^2}}dx\right\}
	    \bigl(1+o(1)\bigr) \\
	    \leq \|T_n^ww\|_{[-1,1]}  
	    \leq 2^{1-n}\exp\left\{\frac{1}{\pi}\int_{-1}^{1}
	    \frac{\log w_u^{\eps}(x)}{\sqrt{1-x^2}}dx\right\} 
	    \bigl(1+o(1)\bigr)  \qquad
	 \end{multline*}
as $n\rightarrow \infty$. By dominated convergence, 
	\[
		\lim_{\eps\rightarrow 0} \int_{-1}^{1}\frac{\log w_l^{\eps}(x)}{\sqrt{1-x^2}}dx = 
		\int_{-1}^{1}\frac{\log w(x)}{\sqrt{1-x^2}}dx =
		\lim_{\eps\rightarrow 0} \int_{-1}^{1}\frac{\log w_u^{\eps}(x)}{\sqrt{1-x^2}}dx,
	\]
so for any given $r>0$, we can choose $\eps>0$ small enough that
	 \[
	     1-r \leq \frac{\exp\left\{\frac{1}{\pi}\int_{-1}^{1}
	     \frac{\log w_l^{\eps}(x)}{\sqrt{1-x^2}}dx\right\}}
	     {\exp\left\{\frac{1}{\pi}\int_{-1}^{1}\frac{\log w(x)}{\sqrt{1-x^2}}dx\right\}}
	     \leq 1,  \qquad 
	     1\leq \frac{\exp\left\{\frac{1}{\pi}\int_{-1}^{1}
	     \frac{\log w_u^{\eps}(x)}{\sqrt{1-x^2}}dx\right\}}
	     {\exp\left\{\frac{1}{\pi}\int_{-1}^{1}\frac{\log w(x)}{\sqrt{1-x^2}}dx\right\}}
	     \leq 1+r.
	 \]
This in turn implies that
	\begin{multline*}
	    \qquad 2^{1-n}\exp\left\{\frac{1}{\pi}\int_{-1}^{1}
	    \frac{\log w(x)}{\sqrt{1-x^2}}dx\right\} (1-r) \bigl(1+o(1)\bigr) \\
	    \leq \|T_n^w w\|_{[-1,1]}
	    \leq 2^{1-n}\exp\left\{\frac{1}{\pi}\int_{-1}^{1}
	    \frac{\log w(x)}{\sqrt{1-x^2}}dx\right\}(1+r)\bigl(1+o(1)\bigr)  \qquad
	 \end{multline*}
as $n\rightarrow \infty$. Since $r>0$ is arbitrary, we conclude that
	 \[
	    \|T_n^ww\|_{[-1,1]} = 
	    2^{1-n}\exp\left\{\frac{1}{\pi}\int_{-1}^{1}
	    \frac{\log w(x)}{\sqrt{1-x^2}}dx\right\}\bigl(1+o(1)\bigr)
	 \]
as $n\rightarrow \infty$. In other words, \eqref{eq:non_zero_weighted_cheb_asymptotics} --- which coincides with \eqref{B asymp} --- applies.  The addition of more weights of the form $|x-b_k|^{\alpha_k}$ can be carried out by repeated use of the argument explained above.
 	

	

\end{document}